\newtheorem{theorem}{Theorem}[section]
\newtheorem{lemma}[theorem]{Lemma}
\newtheorem{proposition}[theorem]{Proposition}
\newtheorem{corollary}[theorem]{Corollary}
\newtheorem*{theoremA}{Theorem A}
\theoremstyle{definition}
\theoremstyle{remark}
\newtheorem{remark}[theorem]{Remark}
\numberwithin{equation}{section}
\newcommand{\R}{\mathbb{R}}
\newcommand{\C}{\mathbb{C}}
\newcommand{\Kill}{\mathrm{Kill}}
\newcommand{\Iso}{\mathrm{Iso}}
\newcommand{\HH}{\mathcal{H}}
\newcommand{\GG}{\mathcal{G}}
\newcommand{\UU}{\mathcal{U}}
\newcommand{\WW}{\mathcal{W}}
\newcommand{\FF}{\mathcal{F}}
\newcommand{\g}{\mathfrak{g}}
\newcommand{\h}{\mathfrak{h}}
\newcommand{\s}{\mathfrak{s}}
\newcommand{\so}{\mathfrak{so}}
\newcommand{\sli}{\mathfrak{sl}}
\newcommand{\gl}{\mathfrak{gl}}
\newcommand{\rad}{\mathrm{rad}}
\newcommand{\LA}{\langle}
\newcommand{\RA}{\rangle}
\newcommand{\TG}{\widetilde{G}}
\newcommand{\TM}{\widetilde{M}}
\newcommand{\sS}{\mathrm{S}}
\newcommand{\spi}{\mathfrak{sp}}
\newcommand{\ev}{\mathrm{ev}}
\newcommand{\TSp}{\widetilde{\mathrm{Sp}}}
\begin{document}

\title{Classification of $\big(\TSp(n,\R)\times\TSp(1,\R)\big)$-manifolds}
\author{Gestur \'Olafsson}
\address{Department of Mathematics, Louisiana State University, Baton Rouge, Louisiana}
\email{olafsson@math.lsu.edu}
\author{Eli Roblero-M\'endez}
\address{Department of Mathematics, Louisiana State University, Baton Rouge, Louisiana}
\email{elir@math.lsu.edu}

\keywords{Semisimple Lie groups, rigidity results, pseudo-Riemannian manifolds}
\subjclass{57S20, 53C24, 53C50}

\begin{abstract}
 Let $M$ be an analytic complete finite volume pseudo-Riemannian manifold and $\TSp(n,\R)\times\TSp(1,\R)$
 a connected semisimple Lie group such that its Lie algebra is $\spi(n,\R)\oplus\spi(1,\R)$.
 We characterize the structure of the manifold $M$ assuming that the Lie group $\TSp(n,\R)\times\TSp(1,\R)$
 acts isometrically on $M$ and that its dimension satisfies $3+n(2n+1)<\dim(M)\leq(n+1)(2n+3)$.
\end{abstract}

\maketitle

\section*{Introduction}\label{sect: Intro}
 Let $G$ be a connected non-compact simple Lie group acting isometrically on a connected analytic
 manifold $M$ with a pseudo-Riemannian metric of finite volume. It has been conjectured that
 such actions are rigid, in the sense that restrict the possibilities for $M$. Such conjecture is
 consequence of the program proposed by Robert Zimmer (see **). A principal belief is that such
 action together with other non-trivial assumptions imply that $M$ is, basically, the double coset of a
 semisimple Lie group $H$. Specifically, we have a homomorphism $G\to H$, the existence of a
 compact subgroup $K\subset H$, centralizing the image of $G$, and a lattice $\Gamma\subset H$ such that
 $M$ is isometric to $\Gamma\backslash H/K$.

 Some results have been found in this subject, for example the actions of the Lie groups
 $SO(p,q)$ and $\widetilde U(p,q)$, where $p$ and $q$ are non-negative integer numbers
 (see \cite{OQ},\cite{OQ-Upq}). Note that in the latter case that the Lie group $\widetilde{U}(p,q)$
 is not simple, therefore, there is evidence to suppose that the previous conjecture
 can be true with other Lie groups not necessarily simple.

 In such context we present this research, here we analyze the isometric action of the semisimple
 Lie group $\big(\TSp(n,\R)\times\TSp(1,\R)\big)$ on a connected pseudo-Riemannian manifold of finite volume,
 assuming that both the action and the ma\-ni\-fold are analytic.

 In this paper, for any connected manifold $N$ we denote by $\widetilde{N}$ the simple connected universal
 covering of $N$. Let $G_i$ be a non-compact, connected simple Lie group with Lie algebra $\g_i$, $i=1,2$.
 In this case $G:=G_1\times G_2$ is a semisimple Lie group without compact factors with Lie algebra
 $\g=\g_1\oplus\g_2$. Let $M$ be a connected, finite-volume, pseudo Riemannian manifold which
 admits an analytic and isometric $G$-action with a dense orbit where no factor of $G$ acts trivially.
  As in \cite{OQ}, we prove that for $M$ a weakly irreducible and complete manifold there is a lower bound
 of its dimension given by the dimension of the semisimple Lie group and the properties of the representations
 of its Lie algebra. In other words
 \begin{equation*}
   \dim(M)\geq\dim(G)+m_0(\g_1,\g_2)
 \end{equation*}
 where $m_0(\g_1,\g_2)$ denotes the dimension of the smallest non-trivial representation of both Lie algebras,
 $\g_1$ and $\g_2$, preserving a non-degenerate symmetric bilinear form.

 As a consequence of our research we have the following theorem who main result says that such action,
 together with other conditions on the manifold and the action, imply that $M$ is isomorphic, up to a
 finite covering, to a quotient map of a simple Lie group over a lattice.

 \begin{theoremA}\label{th: A}
   We assume the semisimple Lie group $G=\big(\TSp(n,\R)\times\TSp(1,\R)\big)$, for $n\geq3$, acts
   isometrically with a dense orbit on a connected, finite-volume, complete, pseudo-Riemannian
   manifold $M$, where no factor of $G$ acts trivially.
   Assume that $M$ and the $G$-action on $M$ are both analytic.
   If $M$ is weakly irreducible and satisfies that $\dim(M)=(n+1)(2n+3)$, then there exist:
   \begin{itemize}
     \item a lattice $\Gamma\subset\TSp(n+1,\R)$, and
     \item an analytic finite covering map $\tau:\TSp(n+1,\R)/\Gamma\to M$,
   \end{itemize}
   such that $\tau$ is $\big(\TSp(n,\R)\times\TSp(1,\R)\big)$-equivariant. We can also rescale the metric
   on $M$ along the $\TSp(n,\R)$ and $\TSp(1,\R)$-orbits and the normal bundle to the
   $\big(\TSp(n,\R)\times\TSp(1,\R)\big)$-orbits, such that $\tau$ is a local isometry for the metric on
   $\TSp(n+1,\R)$ given by the Killing form of its Lie algebra.
 \end{theoremA}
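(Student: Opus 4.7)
The plan is to follow the template of \cite{OQ} and \cite{OQ-Upq}: identify the normal representation to a principal $G$-orbit, extend the Killing algebra on the universal cover from $\g$ to $\spi(n+1,\R)$, and then conclude that $M$ is (up to a finite cover) a homogeneous space of $\TSp(n+1,\R)$. Concretely, $\dim G = n(2n+1)+3$, so the hypothesis $\dim M = (n+1)(2n+3)$ saturates the lower bound $\dim G + m_0(\g_1,\g_2)$ stated in the Introduction, with $m_0 = 4n$, and this minimum is realized by the tensor product $\V\otimes\W$ of the standard representations $\V=\R^{2n}$ and $\W=\R^{2}$, equipped with the non-degenerate symmetric form obtained as the tensor product of the two symplectic forms. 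First I would verify, via the classification of small representations of $\spi(n,\R)$ for $n\geq 3$, that any representation of $\g$ on which neither factor acts trivially, of dimension at most $4n$, and preserving a non-degenerate symmetric bilinear form, is necessarily isomorphic to $\V\otimes\W$; combined with weak irreducibility and the dimension count, this forces the normal bundle to a principal $G$-orbit to carry precisely this representation.

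Next I would work on the universal cover $\TM$, where analyticity allows extension of locally defined Killing fields. Following the Gromov-type open-dense and centralizer machinery as adapted in \cite{OQ}, I would produce $4n$ additional Killing fields whose values at a generic point span the normal direction and whose brackets with the fundamental fields of $\g$ realize the representation $\V\otimes\W$. The crucial remaining point is to show that these new fields bracket among themselves to reproduce $\g$ (rather than something larger or off-diagonal), so that the resulting subalgebra of $\iso(\TM)$ is isomorphic to $\spi(n+1,\R)$, with $\g$ sitting inside as the standard block-diagonal subalgebra $\spi(n,\R)\oplus\spi(1,\R)$. This is carried out using the Jacobi identity, $\g$-equivariance of the bracket, and Schur-type uniqueness of intertwiners between $\V\otimes\W$ and the relevant $\g$-submodules of $\spi(n+1,\R)$.

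Once $\spi(n+1,\R)\subset\iso(\TM)$ is established, a dimension count gives local transitivity of the $\TSp(n+1,\R)$-action on $\TM$; completeness together with a developing-map argument then produces a global identification $\TM\cong\TSp(n+1,\R)/H$ for a discrete $H$, and finite volume of $M$ together with a Borel-density-type step shows that $H$ descends to a lattice $\Gamma\subset\TSp(n+1,\R)$, yielding the required analytic $G$-equivariant finite cover $\tau$. Metric rescaling is then routine: the three $\g$-isotypic pieces of $TM$ (tangent to each factor's orbit, and the normal bundle) are pairwise inequivalent irreducibles of $\g$, so by Schur the given metric is proportional on each piece to the one induced by the Killing form of $\spi(n+1,\R)$, and rescaling factor by factor makes $\tau$ a local isometry. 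The main obstacle is the middle step: showing that the extra Killing fields assemble into exactly $\spi(n+1,\R)$, rather than into a larger or inequivalent algebra, is where the hypothesis $n\geq 3$ (ruling out low-rank accidental isomorphisms in the representation theory of $\spi(n,\R)$) and weak irreducibility enter in an essential way.
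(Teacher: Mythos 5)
Your proposal follows essentially the same route as the paper: identify the normal module to a generic $G$-orbit as the $4n$-dimensional module $\R^{2n,2n}\cong\R^{2n}\otimes\R^2$ via the classification of small orthogonal representations of $\spi(n,\R)$, use the Gromov--Zimmer centralizer $\HH$ of Killing fields on $\TM$ to extend $\g$ to a copy of $\spi(n+1,\R)$ (the paper pins down the isomorphism type by showing $\big(\HH,\spi(n,\R)\oplus\spi(1,\R)\big)$ is a symmetric pair and invoking Berger's tables, which is the precise form of your Schur/intertwiner step), and then integrate to a right $\TSp(n+1,\R)$-action whose orbit map is an isometry by completeness, with the lattice and Schur-based rescaling arguments as you describe. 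The only point treated more carefully in the paper than in your sketch is the descent step, which uses M\"uller's structure theorem for $\Iso_0(\TSp(n+1,\R))$ to show $\pi_1(M)$ is, up to finite index, a group of right translations before concluding it is a lattice from finite volume.
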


 The proof of our principal result is based in the tools developed by Gromov and Zimmer through the study
 of the properties of representation of the Killing vector fields that centralize the action of the semisimple
 group. One of our principal tools is Proposition \ref{prop: conull sub}, who is a generalization of
 a similar result in \cite{OQ} and \cite{Q-TINC}. Such proposition shows the existence of a Lie algebra
 $\g(x)$, isomorphic to $\spi(n,\R)\oplus\spi(1,\R)$, of Killing vector fields vanishing in a point
 $x\in\TM$. Such Lie algebra $\g(x)$ induces a structure of $\g$-module on $T_x\TM$ which allows
 the use of representation theory to analyze the normal bundle to the foliation generated by
 the orbits of the action.
 The $\g$-module structure of $T_x\TM$ is closely related to a structure of $\g$-module of
 $\HH$, the set of Killing vector fields that centralize the action of the group $G$. Such structure
 gives us more tools to understand the properties of $\HH$, which instead gives place to the action of
 another Lie group on $M$. The proof of the existence of the centralizer $\HH$ of the action can be
 found Section \ref{sect: first res}. In Section \ref{sect: Struct} we analyze the properties of $\HH$
 and its relation with the tangent space at some point in $M$. The new action induced by the
 centralizer is an important tool for the proof of Theorem A, which can be found in \ref{sect: Proof Main Th}.
 Meanwhile, in Appendix A we have results about the representations of $\TSp(n,\R)$ and its Lie
 algebra $\spi(n,\R)$, which are used in the previous sections.

\section{First results}\label{sect: first res}
 Let $G$ be a semisimple Lie group as in the introduction. We assume that $G$ acts isometrically
 with a dense orbit on a connected, finite-volume, pseudo-Riemannian manifold $M$ where no factor of
 $G$ acts trivially.  Hence, the $G$-action is locally free (see \cite[Theorem 4.17]{Szaro}) and its orbits
 define a foliation that we denote by $\FF$. We also denote by $\FF_1$ (resp. $\FF_2$) the foliation defined
 by the $G_1$-orbits (resp. $G_2$-orbits). We consider that $M$ and the $G$-action on $M$ are both analytic.

 For $X\in\g$, we denote by $X^*$ the vector field on the manifold $M$ whose one-parameter group of
 diffeomorphism is given by $\big(\exp(tX)_t\big)$ through the action on the manifold.

 For any given pseudo-Riemannian manifold $N$, we will denote by $\Kill(N)$ the globally defined Killing
 vector fields of $N$. We denote by $\Kill_0(N,x)$ the Lie algebra of Killing vector fields that vanish
 at the given point $x$. For a vector space $W$ with a non-degenerate symmetric bilinear form,
 we will denote by $\so(W)$ the Lie algebra of linear maps on $W$ that are skew-symmetric with respect to
 the bilinear form. The next result is an application of the Jacobi identity.

 \begin{lemma}\label{lem: lambd}
   Let $N$ be a pseudo-Riemannian manifold and $x\in N$. Then, the map $\lambda_x:\Kill_0(N,x)\to\so(T_xN)$
   given by $\lambda_x(Z)(v)=[Z,V]_x$, where $V$ is any vector field such that $V_x=v$, is a well-defined
   homomorphism of Lie algebras.
 \end{lemma}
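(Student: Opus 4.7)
The plan is to verify three things in sequence: well-definedness, the image lies in $\so(T_xN)$, and the Lie algebra homomorphism property. The key ingredient throughout will be that $Z$ vanishes at $x$, which makes terms involving $Z_x$ drop out when evaluating brackets at $x$.

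First I would establish well-definedness. Given $v\in T_xN$, let $V,V'$ be two vector fields with $V_x=V'_x=v$, and set $W=V-V'$, so $W_x=0$. In local coordinates around $x$ with $Z=Z^i\partial_i$ and $W=W^i\partial_i$, one has
\begin{equation*}
  [Z,W]_x = \bigl(Z^j(x)\,\partial_j W^i(x) - W^j(x)\,\partial_j Z^i(x)\bigr)\partial_i|_x = 0,
\end{equation*}
since $Z^j(x)=0$ and $W^j(x)=0$ for all $j$. Hence $[Z,V]_x=[Z,V']_x$, so $\lambda_x(Z)(v)$ is independent of the extension. Linearity in $v$ is immediate from bilinearity of the Lie bracket together with independence of the extension.

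Next I would check that $\lambda_x(Z)\in\so(T_xN)$. Since $Z$ is a Killing vector field, $\mathcal{L}_Z g=0$, which applied to any two vector fields $U,V$ yields
\begin{equation*}
  Z\cdot g(U,V) = g([Z,U],V) + g(U,[Z,V]).
\end{equation*}
Evaluating at $x$ and using $Z_x=0$, the left-hand side vanishes, so
\begin{equation*}
  g_x\bigl(\lambda_x(Z)U_x,V_x\bigr) + g_x\bigl(U_x,\lambda_x(Z)V_x\bigr) = 0,
\end{equation*}
which is precisely the skew-symmetry condition.

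Finally, for the homomorphism property, I would apply the Jacobi identity as suggested by the remark preceding the lemma. Given $Z_1,Z_2\in\Kill_0(N,x)$ and $v\in T_xN$ with extension $V$, Jacobi gives
\begin{equation*}
  [[Z_1,Z_2],V] = [Z_1,[Z_2,V]] - [Z_2,[Z_1,V]].
\end{equation*}
Evaluating at $x$ and using well-definedness to read each bracket $[Z_i,[Z_j,V]]_x$ as $\lambda_x(Z_i)$ applied to $[Z_j,V]_x=\lambda_x(Z_j)(v)$, one obtains
\begin{equation*}
  \lambda_x([Z_1,Z_2])(v) = \lambda_x(Z_1)\lambda_x(Z_2)(v) - \lambda_x(Z_2)\lambda_x(Z_1)(v).
\end{equation*}
The only subtlety is that in the two inner brackets $[Z_j,V]$ we are feeding $\lambda_x(Z_i)$ a vector at $x$ rather than a fixed extension; but since $\lambda_x(Z_i)$ is well-defined by the first step, this causes no issue. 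I do not anticipate a genuine obstacle here: the entire argument is a careful bookkeeping of where $Z_x=0$ is used, and the ``hard'' step is really just recognizing that the vanishing of $Z$ at $x$ is what makes both well-definedness and the reduction to the Jacobi identity work.
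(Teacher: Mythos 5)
Your proof is correct and follows exactly the route the paper indicates (the paper offers no written proof, only the remark that the lemma ``is an application of the Jacobi identity''): you verify well-definedness and skew-symmetry from the vanishing of $Z$ at $x$ and the Killing equation, and then obtain the homomorphism property from the Jacobi identity, with the well-definedness step correctly invoked to interpret the inner brackets. All steps check out, including the observation that both $Z_x=0$ and $W_x=0$ are needed to kill the two terms of $[Z,W]_x$ in coordinates.
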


 An immediate consequence of the previous result is its use in the proof of the following proposition which
 is a generalization of Proposition 1.2 in \cite{OQ}.

 \begin{proposition}\label{prop: conull sub}
   Assume that $G$ acts isometrically with a dense orbit on a connected, finite-volume, pseudo-Riemannian
   manifold $M$, where no factor of $G$ acts trivially. Consider the $\TG$-action on $\TM$ lifted from
   the $G$-action on $M$. Assume that $M$ and
   the $G$-action on $M$ are both analytic. Then, there exists a conull subset $\sS\subset\TM$ such that
   for every $x\in\sS$ the following properties are satisfied:
   \begin{itemize}
     \item[$(1)$] There is a homomorphism $\rho:\g=\g_1\oplus\g_2\to\Kill(\TM)$ which is an isomorphism onto its
      image $\rho_x(\g)=\g(x)$.
     \item[$(2)$] $\g(x)\subset\Kill_0(\TM,x)$, i.e. every element of $\g(x)$ vanishes at $x$.
     \item[$(3)$] For every $X,Y\in\g$ we have:
      \begin{equation*}
        [\rho_x(X),Y^*]=[X,Y]^*=-[X^*,Y^*].
      \end{equation*}
      In particular, the elements in $\g(x)$ and their corresponding local flows preserve $\FF$,
      $\FF_1$, $\FF_2$ and $T\FF^\perp$.
     \item[$(4)$] The homomorphism of Lie algebras $\lambda_x\circ\rho_x:\g\to\so(T_x\TM)$ induces a $\g$-module
      structure on $T_x\TM$ for which the subspaces $T_x\FF$, $T_x\FF_1$, $T_x\FF_2$ and $T_x\FF^\perp$
      are $\g$-submodules.
     \item[$(5)$] For every $X_i\in\g_i$, $i=1,2$, we have
      \begin{equation*}
        [\rho_x(X_i),X_j^*]=[X_i,X_j]^*=0 \quad \text{with} \quad i\neq j.
      \end{equation*}
   \end{itemize}
 \end{proposition}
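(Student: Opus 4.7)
The plan is to mimic the proof of Proposition 1.2 in \cite{OQ}, adapting the Gromov--Zimmer centralizer machine to the semisimple (non-simple) setting $\g=\g_1\oplus\g_2$. First, I would apply Gromov's representation/centralizer theorem to the isometric, finite-volume, analytic action of $G$ on $M$. Since the pseudo-Riemannian metric is a rigid $A$-structure of finite type and the $G$-action has a dense orbit (hence is ergodic for the volume form), Gromov's theorem produces, on a conull subset of $M$, a Lie algebra monomorphism $\iota_x:\g\to\Kill_{\mathrm{loc}}(x)$ into local Killing vector fields near $x$ with $\iota_x(X)_x=0$ for every $X\in\g$, and with the crucial ``centralizer'' property that $\iota_x(X)+X^*$ has vanishing germ-level Lie bracket with $Y^*$ for every $Y\in\g$ (this is exactly what guarantees the identity $[\iota_x(X),Y^*]=[X,Y]^*$ locally).

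Second, I would promote these local Killing fields to global ones on $\TM$. After lifting to the universal cover, each $\iota_x(X)$ defines a local Killing field near a preimage of $x$ in $\TM$; by the Nomizu-type extension theorem (a local Killing field on a simply connected analytic pseudo-Riemannian manifold extends uniquely to a global Killing field), this yields $\rho_x(X)\in\Kill(\TM)$. The assignment $\rho_x$ inherits the homomorphism property and injectivity from $\iota_x$, giving $(1)$, and $(2)$ is immediate from $\iota_x(X)_x=0$.

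Third, I would verify $(3)$--$(5)$ by pushing the local identities to global ones using analyticity. The identity $[\rho_x(X),Y^*]=[X,Y]^*$ holds in a neighborhood of the base point by the centralizer property above, and both sides are analytic Killing fields on $\TM$, so they agree globally by the identity principle; the equality $[X,Y]^*=-[X^*,Y^*]$ is the standard fact that $X\mapsto X^*$ is an anti-homomorphism. Preservation of $\FF$, $\FF_1$, $\FF_2$ by the flows of $\g(x)$ then follows because each $\rho_x(X)+X^*$ commutes with all $Y^*$ (hence its flow permutes $G$-orbits, and using the decomposition $\g=\g_1\oplus\g_2$ also $G_i$-orbits); preservation of $T\FF^\perp$ is automatic because $\rho_x(X)$ is a Killing field and $T\FF$ is preserved. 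Property $(4)$ is then just Lemma \ref{lem: lambd} applied to the homomorphism $\rho_x$, together with the observation that the preserved distributions give $\g$-submodules upon passage to the tangent space at $x$. Finally $(5)$ is the specialization of $(3)$ to $[X_i,X_j]=0$ when $i\neq j$ in $\g=\g_1\oplus\g_2$.

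The main obstacle is the first step: carefully invoking Gromov's theorem to obtain the local Killing fields with the precise centralizer identity, because Gromov's conclusion is usually stated at the level of infinite jets and must be upgraded to germs of Killing fields via the rigidity of the metric (a finite-type $A$-structure), and the resulting candidate map $\iota_x$ must be verified to be a Lie algebra homomorphism on all of $\g$, not just on each simple factor $\g_i$ separately. Once this is handled, the extension across the universal cover and the verification of items $(2)$--$(5)$ are then routine consequences of analyticity and the bracket identities.
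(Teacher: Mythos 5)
Your proposal follows essentially the same route as the paper, which simply defers to the proof of Proposition 1.2 in \cite{Q-TINC}: Gromov's centralizer theorem to produce local Killing fields vanishing at points of a conull set, Nomizu-type extension to global Killing fields on the analytic, simply connected $\TM$, and the bracket identities propagated by analyticity. The only substantive point the paper adds --- and the one your ``main obstacle'' paragraph gestures at without naming --- is that the algebraic hull of the product action on $M\times GL(\g)$ is $\overline{\mathrm{Ad}(G)}^Z$, which is the precise fact (valid for semisimple groups without compact factors, not just simple ones) that lets the argument of \cite{Q-TINC} carry over to $\g=\g_1\oplus\g_2$.
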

 \begin{proof}
   The proof is similar to the proof of Proposition 1.2 in \cite{Q-TINC}. We only note that the result:
   $\overline{\mathrm{Ad}(G)}^Z$ is the algebraic hull of $M\times GL(\g)$ for the product action, is also
   true for semisimple Lie groups without compact factors (see \cite[Example 3.15]{Witte}).
 \end{proof}

 Let $x\in\sS$ and $u\in T_x\FF_1\cap T_x\FF_2$, then there exists $X_i\in\g_i$, for $i=1,2$,
 such that $(X_1^*)_x=(X_2^*)_x=u$. Let $Y_j\in\g_j$ be, for $j=1,2$,
 by Lemma \ref{lem: lambd} and Proposition \ref{prop: conull sub}(3) we have that
 \begin{equation*}
   \lambda_x(\rho_x(Y_1))(u)=[\rho_x(Y_1),X_2^*]_x=[Y_1,X_2]^*_x=0,
 \end{equation*}
 and
 \begin{equation*}
   \lambda_x(\rho_x(Y_2))(u)=[\rho_x(Y_2),X_1^*]_x=[Y_2,X_1]^*_x=0,
 \end{equation*}
 which imply that $u=0$. Since $x\in\sS$ is arbitrary we conclude that $T_x\FF=T_x\FF_1\oplus T_x\FF_2$, for every $x\in\sS$.

 With the above setup, assume that the $G_i$-orbits are non-degenerate
 with respect to the ambient pseudo-Riemannian metric. In particular, the $\TG_i$-orbits
 on $\TM$ are non-degenerate as well and we have a direct sum decomposition $T\TM=T\FF_i\oplus T\FF_i^\perp$.
 Recall the differential form $\omega^i$ (see \cite{Q-TINC}) given, at every point $x\in\TM$, by
 the composition of the natural projection $T_x\TM\to T_x\FF_i$ and the natural isomorphism $T_x\FF_i\cong\g_i$.
 We also recall the $\g_i$-valued $2$-form given by $\Omega^i=d\omega^i|_{\wedge^2T\FF_i^\perp}$, for $i=1,2$.

 \begin{lemma}[{\cite[Lemma 2.5]{Q-TINC}}]\label{lem: normal bundle}
   Let $G$, $M$, and $\sS$ be as in Proposition \ref{prop: conull sub}. If we assume that the $G_i$-orbits
   are non-degenerate, for $i=1,2$, then:
   \begin{itemize}
     \item[$(1)$] For every $x\in\sS$, the maps $\omega^i_x:T_x\TM\to\g_i$ and
         $\Omega^i_x:\wedge^2T_x\FF_i^\perp\to\g_i$
        are $\g_i$-homomorphism, for the $\g_i$-module structures from Proposition
      \ref{prop: conull sub}.
     \item[$(2)$] The normal bundle $T\FF_i^\perp$ is integrable if and only if $\Omega^i=0$.
   \end{itemize}
 \end{lemma}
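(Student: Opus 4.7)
The proof of all three claims will rely on direct computation using Proposition~\ref{prop: conull sub}(3)--(4) together with the Cartan-type identity $d\omega^i(U, W) = -\omega^i([U, W])$, which is valid for vector fields $U, W$ annihilated by $\omega^i$, i.e. local sections of $T\FF_i^\perp$. The non-degeneracy assumption is essential, since it yields the pointwise splitting $T_x\TM = T_x\FF_i \oplus T_x\FF_i^\perp$, with $\omega^i$ being the projection along $T\FF_i^\perp$ composed with the natural isomorphism $(X^*)_x \mapsto X$.

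For the $\g_i$-equivariance of $\omega^i_x$, I would decompose any tangent vector as $v = (X^*)_x + v_\perp$ with $X \in \g_i$ and $v_\perp \in T_x\FF_i^\perp$, choose an extension of the form $V = X^* + V_\perp$ where $V_\perp$ is a local section of $T\FF_i^\perp$, and compute
$\lambda_x(\rho_x(Y))(v) = [\rho_x(Y), X^*]_x + [\rho_x(Y), V_\perp]_x = [Y, X]^*_x + [\rho_x(Y), V_\perp]_x$
using Proposition~\ref{prop: conull sub}(3). Applying $\omega^i_x$ kills the second term (it lies in $T_x\FF_i^\perp$) and returns $[Y, X] = [Y, \omega^i_x(v)]$ from the first.

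For $\Omega^i_x$ I would start from $\Omega^i_x(u \wedge w) = -\omega^i_x([U, W]_x)$, valid for any local sections $U, W$ of $T\FF_i^\perp$ extending $u, w$. Combining the equivariance of $\omega^i_x$ just established with the Jacobi identity yields
$[Y, \Omega^i_x(u \wedge w)] = -\omega^i_x([\rho_x(Y), [U, W]]_x) = -\omega^i_x\bigl([[\rho_x(Y), U], W]_x + [U, [\rho_x(Y), W]]_x\bigr),$
and since $[\rho_x(Y), U]$ and $[\rho_x(Y), W]$ are again local sections of $T\FF_i^\perp$, this equals $\Omega^i_x\bigl(\lambda_x(\rho_x(Y))u \wedge w + u \wedge \lambda_x(\rho_x(Y))w\bigr)$, as required. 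Part (2) is then immediate from Frobenius: $T\FF_i^\perp$ is involutive iff $[U, W]$ is a section of $T\FF_i^\perp$ for all $U, W$ there, iff $\omega^i([U, W]) = 0$ identically, iff $\Omega^i = 0$.

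The main subtle point, used throughout (1), is that $\rho_x(Y)$ preserves the distribution $T\FF_i^\perp$, so that the Lie bracket of $\rho_x(Y)$ with a section of $T\FF_i^\perp$ is again a section of $T\FF_i^\perp$. This is \emph{not} listed among the preservation statements of Proposition~\ref{prop: conull sub}(3) (only $\FF$, $\FF_1$, $\FF_2$ and $T\FF^\perp$ appear there), but it follows from the Killing property: $\rho_x(Y) \in \Kill(\TM)$ has an isometric flow, which by (3) also preserves $\FF_i$; an isometry preserving a non-degenerate distribution also preserves its metric-orthogonal complement.
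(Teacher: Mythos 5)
Your proof is correct and follows essentially the same route as the source: the paper itself only defers to \cite[Lemma 2.5]{Q-TINC}, but the identity \eqref{eq: g_1 mod} that it later quotes ``from the proof of Lemma \ref{lem: normal bundle}'' is exactly the computation you carry out via $\Omega^i_x(u\wedge w)=-\omega^i_x([U,W]_x)$ and the Jacobi identity. Your closing observation --- that $\rho_x(Y)$ preserves $T\FF_i^\perp$ because its local flow is isometric and preserves $\FF_i$, even though only $T\FF^\perp$ is listed in Proposition \ref{prop: conull sub}(3) --- correctly supplies the one step not literally covered by the cited statements.
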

 \begin{proof}
   The proof is similar as that of Lemma 2.5 in \cite{Q-TINC}, where the simplicity (or semisimplicity)
   of the group does not play a role.
 \end{proof}

 Next, we relate the metric of $T\FF$ coming from $M$ to suitable metrics on $G$.

 \begin{lemma}[{\cite[Lemma 2.6]{Q-TINC}}]\label{lem: Ad-inv}
   Suppose that the $G$-action on $M$ has a dense orbit and preserves a finite-volume pseudo-Riemannian
   metric. Then, for every $x\in M$ and with respect to the natural isomorphism $T_x\FF\cong\g$, the
   metric of $M$ restricted to $T_x\FF$ defines and $\mathrm{Ad}(G)$-invariant symmetric bilinear form
   on $\g$ independent of the point $x$.
 \end{lemma}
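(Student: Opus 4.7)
The plan is to package all the forms $\beta_x(X,Y):=\LA X^*_x,Y^*_x\RA_x$ into a single continuous $G$-equivariant map $\beta\colon M\to\mathrm{Sym}^2(\g^*)$, and then use the finite invariant volume together with the absence of compact factors in $G$ to force $\beta$ to be constant with an $\mathrm{Ad}(G)$-invariant value.

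First I would verify the equivariance. Because $G$ acts by isometries and $g\exp(tX)g^{-1}=\exp\bigl(t\,\mathrm{Ad}(g)X\bigr)$, one has $(dL_g)_x X^*_x=(\mathrm{Ad}(g)X)^*_{gx}$; applying this to the isometry $L_g$ gives
\begin{equation*}
  \beta_{gx}(X,Y)=\beta_x\bigl(\mathrm{Ad}(g^{-1})X,\mathrm{Ad}(g^{-1})Y\bigr).
\end{equation*}
Thus $\beta$ intertwines the $G$-action on $M$ with the contragredient adjoint action on the finite-dimensional real vector space $\mathrm{Sym}^2(\g^*)$.

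Next I would push the finite $G$-invariant volume of $M$ forward under $\beta$ to obtain a finite $G$-invariant Borel measure on $\mathrm{Sym}^2(\g^*)$. Since $G$ is semisimple without compact factors and acts linearly, a standard Borel-density / Mautner-type argument forces the pushforward to sit inside the $\mathrm{Ad}(G)$-fixed subspace: hyperbolic elements in an $\R$-split Cartan of $G$ act with eigenvalues of modulus $\neq 1$ on every non-trivial $G$-isotypic component of $\mathrm{Sym}^2(\g^*)$, which is incompatible with a finite invariant measure outside the fixed locus. Hence $\beta_x$ is $\mathrm{Ad}(G)$-invariant for almost every $x$.

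Finally, the locus $E=\{x\in M:\beta_x\text{ is }\mathrm{Ad}(G)\text{-invariant}\}$ is closed, because the invariance condition cuts out a closed subset of $\mathrm{Sym}^2(\g^*)$ and $\beta$ is continuous. Since $E$ has full volume and any non-empty open subset of $M$ has positive volume, $M\setminus E$ must be empty, so $\beta_x$ is $\mathrm{Ad}(G)$-invariant for every $x$. The equivariance formula then yields $\beta_{gx}=\beta_x$ for all $g\in G$, so $\beta$ is constant along $G$-orbits; constancy on the dense orbit together with continuity then forces $\beta$ to be globally constant on $M$. The only delicate step is the Borel-density input; the rest is bookkeeping.
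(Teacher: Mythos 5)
Your proof is correct and is essentially the argument the paper defers to (Lemma 2.6 of \cite{Q-TINC}): the $G$-equivariant map $x\mapsto\beta_x$ into $\mathrm{Sym}^2(\g^*)$, the pushforward of the finite invariant volume, Borel density to force the image into the $\mathrm{Ad}(G)$-fixed vectors, and continuity plus the dense orbit to get constancy. One small caveat: your stated justification that hyperbolic elements act with eigenvalues of modulus $\neq 1$ on every non-trivial isotypic component is not literally true, since non-trivial irreducible summands of $\mathrm{Sym}^2(\g^*)$ can have zero weight spaces; you should invoke the Borel density theorem for semisimple groups without compact factors directly (or refine the recurrence argument to handle the zero-weight directions), but the conclusion is unaffected.
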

 \begin{proof}
   See the proof at Lemma 2.6 in \cite{Q-TINC}.
 \end{proof}

 We assume, from now on, that $\dim(G_2)<\dim(G_1)$. In this case we have that the non-degeneracy of orbits
 is ensured for low-dimensional manifolds by the next result, which is similar to \cite[Lemma 2.7]{Q-TINC}.

 \begin{lemma}\label{lem: non-dege}
  Assume that $G=G_1\times G_2$ acts isometrically and with a dense orbit on a connected finite-volume
  pseudo-Riemannian manifold $M$. If $\dim(M)<2\dim(G_1)$ and if the $G_2$-orbits are non-degenerate,
  then the bundles $T\FF_1$, $T\FF$ and $T\FF^\perp$ have fibers that are non-degenerate with
  respect to the metric on $M$.
 \end{lemma}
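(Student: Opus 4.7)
The plan is to establish non-degeneracy of $T\FF_1$ first, and then derive non-degeneracy of $T\FF$ and $T\FF^\perp$ as formal consequences. The crucial mechanism at the first step is the interplay between the adjoint $\g_1$-action on $T_x\FF_1$ and the dimension hypothesis.

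Fix a generic point $x\in\sS$ from Proposition \ref{prop: conull sub}. By parts (3) and (4) of that proposition, the map $Y\mapsto Y^*_x$ identifies $T_x\FF_1$ with $\g_1$ as a $\g_1$-module, the action being the adjoint representation. By Lemma \ref{lem: lambd}, the $\g_1$-action on $T_x\TM$ lands in $\so(T_x\TM)$, so both $T_x\FF_1$ and $T_x\FF_1^\perp$ are $\g_1$-invariant, and consequently so is the radical $\RR_1 := T_x\FF_1\cap T_x\FF_1^\perp$. Since $\g_1$ is simple, its adjoint representation is irreducible, so $\RR_1$ is either $0$ or all of $T_x\FF_1$; the latter would give $T_x\FF_1\subset T_x\FF_1^\perp$, whence $2\dim G_1 \leq \dim M$, contradicting the hypothesis. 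Therefore $\RR_1 = 0$ and $T\FF_1$ is non-degenerate.

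For $T\FF$, I would appeal to Lemma \ref{lem: Ad-inv}: the restriction of the metric to $T_x\FF \cong \g_1\oplus\g_2$ is an $\mathrm{Ad}(G)$-invariant bilinear form on $\g$. Combining $\mathrm{Ad}(\g_1)$-invariance with $[\g_1,\g_2]=0$ and $[\g_1,\g_1]=\g_1$ yields that $\g_1$ and $\g_2$ are mutually orthogonal. Since both factors carry non-degenerate metrics (one by the previous paragraph, the other by hypothesis), the direct sum $T_x\FF = T_x\FF_1\oplus T_x\FF_2$ is non-degenerate. Non-degeneracy of $T_x\FF^\perp$ is then automatic from the fact that the orthogonal complement of a non-degenerate subspace in a pseudo-Riemannian inner-product space is itself non-degenerate.

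I expect the main obstacle to be the first dichotomy step for $\RR_1$, which is the only place where the dimension hypothesis and the simplicity of $\g_1$ really meet; once the $\g_1$-module structure of $T_x\FF_1$ is unwound from Proposition \ref{prop: conull sub}, the contradiction is immediate. The remaining steps reduce to short manipulations of $\mathrm{Ad}$-invariant forms on a direct sum of simple ideals, and to the standard non-degeneracy of orthogonal complements in pseudo-Euclidean space.
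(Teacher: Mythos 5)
Your proof is correct, and it reaches the same crux as the paper --- if $T_x\FF_1$ were totally isotropic then $\dim(G_1)\leq\min(m,n)$ for the signature $(m,n)$, hence $2\dim(G_1)\leq\dim(M)$, a contradiction --- but it arrives at the dichotomy for the radical by a different mechanism. The paper works entirely with Lemma \ref{lem: Ad-inv}: the restriction of the metric to $T_x\FF\cong\g$ is an $\mathrm{Ad}(G)$-invariant form, so its kernel is an ideal of $\g=\g_1\oplus\g_2$ and therefore one of $0$, $\g_1$, $\g_2$, $\g$; the non-degeneracy hypothesis on the $G_2$-orbits rules out the last two cases and the dimension count rules out $\g_1$. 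That argument needs neither analyticity nor the conull set $\sS$, and it applies verbatim at every point of $M$. You instead invoke Proposition \ref{prop: conull sub} together with the skew-symmetry of $\lambda_x$ to see that the radical $T_x\FF_1\cap T_x\FF_1^{\perp}$ is a submodule of the irreducible adjoint module $\g_1$; this is heavier machinery and, as written, only yields non-degeneracy of the fibre at almost every $x$. To pass from a generic fibre to all fibres you still need the observation (again from Lemma \ref{lem: Ad-inv}) that the induced form on $\g$ is independent of the point, which you should state explicitly. Your handling of $T\FF$ via the orthogonality $\g_1\perp\g_2$ forced by invariance and $[\g_1,\g_1]=\g_1$, and of $T\FF^{\perp}$ as the orthogonal complement of a non-degenerate subspace, matches --- and usefully makes explicit --- what the paper compresses into its final sentence.
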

 \begin{proof}
  By Lemma \ref{lem: Ad-inv}, for every $x\in M$, the metric $h$ (on $M$) restricted to $T_x\FF$ corresponds
  to an $\mathrm{Ad}(G)$-invariant form in $\g$. The Kernel of such form is $\g$-invariant,
  therefore, since $T_x\FF_i\simeq\g_i$, $\ker(h_x)$ is either $0$, $T_x\FF_1$, $T_x\FF_2$ or $T_x\FF$.

  Assume that $T_x\FF_1\subseteq\ker(h_x)$ for some $x\in M$. Then, $T_x\FF_1$ lies in the null cone
  of $T_x\FF$ for the metric $h_x$. Hence, for the signature of $M$, which we denote as $(m,n)$, we have
  that $\dim(G_1)=\dim(T_x\FF_1)\leq\min(m,n)$. This implies that $2\dim(G_1)\leq m+n=\dim(M)$, which is
  impossible.

  On the other hand, by hypothesis $\ker(h_x)\neq T_x\FF_2$. Therefore and the previous paragraphs
  we have the desired result.
 \end{proof}

 \begin{remark}\label{rem: orthogonal}
   Let us choose and fix an element $x\in\sS$. Let $X_i\in\g_i$ be, for $i=1,2$, if $Z_1\in\g_1$
   then, by Proposition \ref{prop: conull sub}(2)-(3), $\rho_x(Z_1)\in\Kill_0(\TM,x)$ and
   \begin{eqnarray*}
     \LA(X_1)^*_x,(X_2)^*_x\RA_x &=& \LA\rho_x(Z_1)\cdot(X_1)^*_x,\rho_x(Z_1)\cdot(X_2)^*_x\RA_x \\
      &=& \LA[Z_1,X_1]^*_x,[Z_1,X_2]^*_x\RA_x \\
      &=& \LA[Z_1,X_1]^*_x,0^*_x\RA_x\\
      &=& \LA[Z_1,X_1]^*_x,0\RA_x\\
      &=& 0.
   \end{eqnarray*}
   The previous computation and Lemma \ref{lem: non-dege} imply that $T_x\FF$ is an orthogonal direct sum of
   $T_x\FF_1$ and $T_x\FF_2$. In particular, $T_x\FF_1^\perp$ is the orthogonal direct sum of $T_x\FF_2$ and $T_x\FF^\perp$.
 \end{remark}

 If the $G$-orbits are non-degenerate and the normal bundle to such orbits is integrable, then the
 universal covering space can be split.

 \begin{proposition}\label{prop: split}
   Assume $G_i$ (resp. $G$) acts isometrically
   on a connected, complete, finite-volume, pseudo-Riemannian manifold $M$. If the tangent bundle to the
   orbits $T\FF_i$ (resp. $T\FF$) has non-degenerate fibers and the bundle $T\FF_i^\perp$ (resp. $T\FF^\perp$)
   is integrable, then there is an isometric co\-ve\-ring map $\TG_i\times N\to M$ (resp. $\TG\times N\to M$)
   where the domain has the product metric for a
   bi-invariant metric on $\TG_i$ (resp. $\TG$) and with $N$ a complete pseudo-Riemannian manifold, for $i=1,2$.
 \end{proposition}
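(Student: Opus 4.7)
The plan is to construct an explicit isometric local diffeomorphism $\psi\colon\TG\times N\to\TM$ from the action of $\TG$ on a single leaf $N$ of the orthogonal foliation, and then use completeness to upgrade $\psi$ to a covering map; composing with the universal covering $\TM\to M$ yields the claim. The argument for the $G_i$-case is identical with $G$, $\FF$, $T\FF^\perp$ replaced everywhere by $G_i$, $\FF_i$, $T\FF_i^\perp$, so I focus on the $G$-case.

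First, the hypotheses give an orthogonal decomposition $T\TM=T\FF\oplus T\FF^\perp$ into two non-degenerate, complementary, integrable distributions: $T\FF$ is integrable as the tangent distribution of the orbit foliation, and $T\FF^\perp$ is integrable by assumption. By Lemma \ref{lem: Ad-inv}, the ambient metric restricted to $T\FF$ corresponds to a fixed $\mathrm{Ad}(G)$-invariant symmetric bilinear form on $\g$, which extends to a bi-invariant pseudo-Riemannian metric on $\TG$; this metric is automatically geodesically complete since the geodesics through the identity are one-parameter subgroups. I then fix a basepoint $x_0\in\TM$ and let $N\subset\TM$ be the leaf of $T\FF^\perp$ through $x_0$, endowed with the induced metric.

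Next, I would define
\begin{equation*}
   \psi\colon\TG\times N\longrightarrow\TM, \qquad \psi(g,y)=g\cdot y,
\end{equation*}
and verify that $\psi$ is a local isometry when $\TG\times N$ carries the product metric. That $\psi$ is a local diffeomorphism follows from the transversality $T\FF\cap T\FF^\perp=0$ combined with the fact that the lifted $\TG$-action on $\TM$ is locally free. That it is a local isometry follows from three ingredients: the identification $T_y\FF\cong\g$ induced by the action intertwines the ambient metric on $T_y\FF$ with the chosen $\mathrm{Ad}(G)$-invariant form, hence with the bi-invariant metric on $\TG$; the group $\TG$ acts by isometries and, by invariance of the metric, preserves $T\FF^\perp$, so $y\mapsto g\cdot y$ restricts to an isometry from $N$ onto the leaf of $T\FF^\perp$ through $g\cdot y$; and the two distributions are orthogonal, matching the product structure on $\TG\times N$.

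The remaining step is to upgrade $\psi$ to a surjective covering. I would invoke the pseudo-Riemannian version of the classical fact that a local isometry whose domain is complete is a covering map onto its image, and then use connectedness of $\TM$ to conclude that this image is all of $\TM$. Completeness of $\TG\times N$ reduces to completeness of $N$, and this is the step I expect to be the main obstacle: it amounts to showing that leaves of $T\FF^\perp$ are totally geodesic in $\TM$, so that they inherit completeness from $M$. In our setting this follows from the orthogonal-integrable decomposition together with the fact that $T\FF$ is pointwise spanned by Killing vector fields of the form $X^*$, which forces the second fundamental form of each leaf of $T\FF^\perp$ to vanish; non-degeneracy of the orbits is essential here, as it prevents the degeneracy phenomena that could obstruct the Riemannian-style argument in the pseudo-Riemannian setting. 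Once this is in place, $\psi$ is an isometric covering, and composing with $\TM\to M$ produces the desired covering $\TG\times N\to M$.
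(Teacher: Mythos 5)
The paper states this proposition without proof (it is imported from the analogous results in \cite{OQ} and \cite{Q-TINC}), so there is no in-text argument to compare against; your proposal supplies essentially the standard proof used in those references, and it is correct. The two load-bearing points are exactly the ones you identify: the Killing equation for the fields $X^*$ spanning $T\FF$, together with integrability of $T\FF^\perp$ and non-degeneracy of $T\FF$, forces $\langle\nabla_UV,X^*\rangle=0$ for sections $U,V$ of $T\FF^\perp$, so the leaves of $T\FF^\perp$ are totally geodesic and hence complete; and O'Neill's Corollary 29 then upgrades the complete local isometry $\psi(g,y)=g\cdot y$ to a covering. One small caveat: for $\psi$ to be a local isometry you need $\langle X^*_z,Y^*_z\rangle$ to be the \emph{same} $\mathrm{Ad}(G)$-invariant form at every point $z$, which is Lemma \ref{lem: Ad-inv} and uses the dense-orbit hypothesis; that hypothesis is not restated in the proposition but is a standing assumption of the section, so your appeal to Lemma \ref{lem: Ad-inv} is legitimate in context, though worth flagging.
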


 Recall, from the proof of Lemma \ref{lem: normal bundle}, that for $X_1\in\g_1$ and if $u,v\in T_x\FF_1^\perp$
 with $U,V$ sections of $T\FF_1^\perp$ extending them, we have that
 \begin{equation}\label{eq: g_1 mod}
   X_1\cdot\Omega^1_x(u\wedge v)=-\omega^1_x([[\rho_x(X_1),U],V]_x)-\omega^1_x([U,[\rho_x(X_1),V]]_x),
 \end{equation}
 in a similar way, we have the same result for the homomorphism $\Omega^2_x$.

 \begin{lemma}\label{lem: TF non-tr}
   Let $G$, $M$ and $\sS$ be as in Proposition \ref{prop: conull sub}. Assume that $M$ is complete
   and weakly irreducible. Then, for almost every $x\in\sS$ we have that $T_x\FF^\perp$ is a non-trivial
   $\g_i$-module, $i=1,2$.
 \end{lemma}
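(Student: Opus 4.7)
The plan is a proof by contradiction. Suppose that for some $i\in\{1,2\}$ the subspace $T_x\FF^\perp$ is a trivial $\g_i$-module for $x$ varying in a positive-measure subset of $\sS$; since the map $x\mapsto \lambda_x\circ\rho_x$ depends analytically on $x$, this triviality propagates throughout $\sS$. Together with Proposition \ref{prop: conull sub}(5), which says that $\g_i$ acts trivially on $T_x\FF_j$ for $j\neq i$, one concludes that $\g_i$ acts trivially on the whole subspace $T_x\FF_j+T_x\FF^\perp$. On the other hand, Proposition \ref{prop: conull sub}(3) identifies $T_x\FF_i$ with the adjoint representation of the simple Lie algebra $\g_i$, which is irreducible and non-trivial.

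The next step is to convert this representation-theoretic imbalance into a geometric splitting. By complete reducibility under $\g_i$, decompose $T_x\TM$ into its trivial and non-trivial $\g_i$-isotypic components; because the pseudo-Riemannian metric on $T_x\TM$ is $\g$-invariant (the action via $\lambda_x\circ\rho_x$ is by skew-symmetric endomorphisms), non-isomorphic isotypic components are orthogonal. Combining this with Lemma \ref{lem: Ad-inv} (the metric on $T_x\FF_i\cong\g_i$ is a multiple of the Killing form of $\g_i$, hence either zero or non-degenerate), one deduces that $T_x\FF_i$ is non-degenerate and that $T_x\FF_i^\perp=T_x\FF_j+T_x\FF^\perp$ is a trivial $\g_i$-module. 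Now Lemma \ref{lem: normal bundle}(1) makes $\Omega^i_x:\wedge^2T_x\FF_i^\perp\to\g_i$ a $\g_i$-homomorphism from a trivial module to the irreducible non-trivial module $\g_i$, hence identically zero; analyticity propagates this to $\Omega^i\equiv 0$, and Lemma \ref{lem: normal bundle}(2) yields that $T\FF_i^\perp$ is integrable.

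With $T\FF_i$ having non-degenerate fibers and $T\FF_i^\perp$ integrable, Proposition \ref{prop: split} produces an isometric covering $\TG_i\times N\to M$ for some complete pseudo-Riemannian manifold $N$. This realises the universal cover of $M$ as a pseudo-Riemannian product, contradicting the weak irreducibility of $M$. The technically delicate point is confirming that the $\mathrm{Ad}(G)$-invariant form on $\g$ from Lemma \ref{lem: Ad-inv} does not restrict to zero on the simple summand $\g_i$ (in which case $T_x\FF_i$ would be totally isotropic and the splitting argument would break down); this must be ruled out by exploiting the non-degeneracy of the full metric on $T_x\TM$ together with the orthogonality between distinct $\g_i$-isotypic components, possibly by a dimensional or signature argument relating $\dim\g_i$ to the signature of $M$.
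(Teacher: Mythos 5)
Your argument is essentially the paper's proof run in the contrapositive direction: the paper shows weak irreducibility forces $\Omega^1\neq 0$ at almost every point and then reads off non-triviality of $T_x\FF^\perp$ from the fact that $\Omega^1_x$ is a $\g_1$-module homomorphism into the (non-trivial, irreducible) adjoint module, whereas you assume triviality, conclude $\Omega^i\equiv 0$ by the same homomorphism property plus analyticity, and contradict weak irreducibility via Proposition \ref{prop: split} — the same lemmas in the same roles. Two small remarks: the claim that $x\mapsto\lambda_x\circ\rho_x$ is analytic is unjustified ($\rho_x$ comes from a measurable Gromov--Zimmer construction on a conull set) but also unnecessary, since, as you do later, the analyticity should be applied to the globally defined $2$-form $\Omega^i$ itself; and the non-degeneracy of $T_x\FF_i$ that you flag as delicate is exactly Lemma \ref{lem: non-dege}, which holds under the standing assumptions (non-degenerate $G_2$-orbits and $\dim(M)<2\dim(G_1)$) in force where this lemma is used.
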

 \begin{proof}
   Since $\TM$ is a weakly irreducible manifold, by Proposition \ref{prop: split} and Lemma
   \ref{lem: normal bundle}(2) we have that $\Omega^1\neq0$, therefore, since the $2$-form $\Omega^1$ is
   clearly analytic, it vanishes on a proper analytic submanifold subset of $\TM$ of measure
   zero. Hence, $\Omega^1_x\neq0$ for almost every $x\in\sS$.
   Let $x\in\sS$ be an arbitrary but fixed element such that $\Omega^1_x\neq0$. Lemma
   \ref{lem: normal bundle}(1) implies that the map $\Omega^1_x:T_x\FF_1^\perp\to\g_1$ is a
   non-trivial map.

   Let $u,v\in T_x\FF_1^\perp$, by Remark \ref{rem: orthogonal},
   there are $u_2,v_2\in T_x\FF_2$ and $\hat{u},\hat{v}\in T_x\FF^\perp$ such that
   $u=u_2+\hat{u}$ and $v=v_2+\hat{v}$. Let $U,V$ be sections of $T\FF^\perp$ such that
   $U_x=\hat{u}$ and $V_x=\hat{v}$, in the same way, let $X_2,Y_2\in\g_2$ be such that $(X_2)^*_x=u_2$
   and $(Y_2)^*_x=v_2$. Now, let $X_1\in\g_1$, by \eqref{eq: g_1 mod}, we have
   \begin{eqnarray*}
     X_1\cdot\Omega^1_x(u\wedge v) &=& X_1\cdot\Omega^1_x\big((u_2+\hat{u})\wedge(v_2+\hat{v})\big) \\
      &=& X_1\cdot\Omega^1_x(u_2\wedge v_2)+ X_1\cdot\Omega^1_x(u_2\wedge \hat{v})+\\
      &&  X_1\cdot\Omega^1_x(\hat{u}\wedge v_2)+ X_1\cdot\Omega^1_x(\hat{u}\wedge\hat{v})\\
      &=& -\omega^1_x([[\rho_x(X_1),X_2^*],Y^*_2]_x)-\omega^1_x([X_2^*,[\rho_x(X_1),Y_2^*]]_x)+\\
      && -\omega^1_x([[\rho_x(X_1),X_2^*],U]_x)-\omega^1_x([X_2^*,[\rho_x(X_1),U]]_x)+\\
      && -\omega^1_x([[\rho_x(X_1),U],Y^*_2]_x)-\omega^1_x([U,[\rho_x(X_1),Y_2^*]]_x)+\\
      && -\omega^1_x([[\rho_x(X_1),U],V]_x)-\omega^1_x([U,[\rho_x(X_1),V]]_x)\\
      &=& -\omega^1_x([0,Y^*_2]_x)-\omega^1_x([X_2^*,0]_x)+\\
      && -\omega^1_x([0,U]_x)-\omega^1_x([X_2^*,0]_x)+\\
      && -\omega^1_x([0,Y^*_2]_x)-\omega^1_x([U,0]_x)+\\
      && -\omega^1_x([0,V]_x)-\omega^1_x([U,0]_x)\\
      &=& 0.
   \end{eqnarray*}
   As $X_1\in\g_1$ was arbitrary, it follows that
   $\Omega^1_x(T_x\FF_2\wedge T_x\FF^\perp)=0$. On the other hand, because $\Omega^1_x\neq0$,
   we have that $\Omega^1_x(\wedge^2T_x\FF^\perp)\neq0$,  therefore, we have that $T_x\FF^\perp$ is not
   a trivial $\g_1$-module.

   In a similar way, to the previous steps, we can prove that $T_x\FF^\perp$ is not a trivial $\g_2$-module.
 \end{proof}

 For $i=1,2$, let $m(\g_i)$ be the dimension of the smallest non-trivial representation of $\g_i$ that admits an
 invariant non-degenerate symmetric bilinear form.
 Since $\g=\g_1\oplus\g_2$, we define $m(\g)=m(\g_1,\g_2)$
 the dimension of the smallest non-trivial representation of both $\g_1$ and $\g_2$ that admits an
 invariant non-degenerate symmetric bilinear form. If we assume that there is a non-trivial homomorphism
 $\g_2\to\g_1$ then $m(\g)\leq m(\g_1)$, even more we have that $m(\g)=m(\g_1)$.

 From now on we assume the existence of an injective homomorphism $\g_2\to\g_1$.
 A consequence of the previous result is the obtention of a lower bound on the dimension of $M$.

 \begin{proposition}\label{prop: dim}
   Let $M$ be a connected analytic pseudo-Riemannian manifold. Suppose that $M$ is
   complete, weakly irreducible, has finite-volume and admits an analytic isometric, non-transitive
   $G$-action with a dense orbit and such that no factor acts trivially. We also assume that the
   $G_2$-orbits are non-degenerate. If $m(\g)+\dim(G_2)\leq\dim(G_1)$ then
   \begin{equation*}
     \dim(M)\geq\dim(G)+m(\g).
   \end{equation*}
 \end{proposition}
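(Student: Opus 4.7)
The plan is to dichotomize on the size of $\dim(M)$ relative to $2\dim(G_1)$, then in the low-dimensional regime read off the bound from the $\g$-module structure on the normal bundle to the foliation.

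In the regime $\dim(M) \geq 2\dim(G_1)$, the hypothesis $m(\g) + \dim(G_2) \leq \dim(G_1)$ yields
\[
\dim(G) + m(\g) = \dim(G_1) + \dim(G_2) + m(\g) \leq 2\dim(G_1) \leq \dim(M),
\]
which is the desired inequality, so there is nothing more to prove in this range.

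Suppose instead $\dim(M) < 2\dim(G_1)$. Since the $G_2$-orbits are non-degenerate, Lemma \ref{lem: non-dege} becomes applicable and gives that $T\FF$ and $T\FF^\perp$ both have non-degenerate fibers; in particular one has the orthogonal direct sum decomposition $T_y\TM = T_y\FF \oplus T_y\FF^\perp$ at every $y \in \TM$. I would then fix a point $x$ lying in the conull subset $\sS$ of Proposition \ref{prop: conull sub} and at which the conclusion of Lemma \ref{lem: TF non-tr} also holds (the intersection remains conull). Local freeness of the $G$-action gives $\dim T_x\FF = \dim G$. By Proposition \ref{prop: conull sub}(4), the normal space $T_x\FF^\perp$ is a $\g$-submodule of $T_x\TM$; because $\lambda_x \circ \rho_x$ lands in $\so(T_x\TM)$, the restriction of the ambient metric to $T_x\FF^\perp$ is a $\g$-invariant non-degenerate symmetric bilinear form. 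By Lemma \ref{lem: TF non-tr}, this module is non-trivial as a $\g_i$-module for both $i = 1, 2$. The defining property of $m(\g)$ as the minimal dimension of such a module then forces $\dim T_x\FF^\perp \geq m(\g)$, and summing the two dimensions yields $\dim(M) \geq \dim(G) + m(\g)$.

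The only load-bearing check is, in the second case, that $T_x\FF^\perp$ truly satisfies every requirement encoded in the definition of $m(\g)$: non-triviality of the $\g_i$-actions (Lemma \ref{lem: TF non-tr}), invariance of the bilinear form (the $\so$-target of $\lambda_x\circ\rho_x$), and non-degeneracy of that form (Lemma \ref{lem: non-dege}). Once this is confirmed, the dimension count is immediate, and the case split ensures that the hypothesis $m(\g)+\dim(G_2)\leq\dim(G_1)$ is never idle.
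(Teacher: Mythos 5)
Your proof is correct and is essentially the paper's argument read in the contrapositive: the paper assumes $\dim(M)<\dim(G)+m(\g)$, deduces $\dim(M)<2\dim(G_1)$ from the hypothesis $m(\g)+\dim(G_2)\leq\dim(G_1)$, and then uses Lemma \ref{lem: non-dege}, Lemma \ref{lem: TF non-tr} and the definition of $m(\g)$ to reach a contradiction with weak irreducibility — exactly the ingredients of your second case. Your explicit handling of the regime $\dim(M)\geq 2\dim(G_1)$ is just the paper's first implication unwound, so the two proofs coincide in substance.
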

 \begin{proof}
   Suppose that $\dim(M)<\dim(G)+m(\g)$. Since $m(\g)+\dim(G_2)\leq\dim(G_1)$ then $\dim(M)<2\dim(G_1)$
   and, by Lemma \ref{lem: non-dege}, the bundle $T\FF^\perp$ has non-degenerate fibers with
   dimension $<m(\g)$. Hence, Lemma \ref{lem: TF non-tr} and the definition of $m(\g)$ imply that
   $T_x\FF^\perp$ is a trivial $\g_1$-module for the structure defined by Proposition
   \ref{prop: conull sub}(4), hence Proposition \ref{prop: split} contradicts the irreducibility of $M$.
 \end{proof}

 For a $G$-action as in Proposition \ref{prop: conull sub}, consider $\TM$ endowed with the $\TG$-action
 obtained by lifting the $G$-action on $M$. With such setup, let us denote by $\HH$ the Lie subalgebra
 of $\Kill(\TM)$ consisting of the fields that centralize the $\TG$-action on $\TM$.

 \begin{lemma}\label{lem: submodule}
   Let $\sS$ be as in Proposition \ref{prop: conull sub}. Then, for every $x\in\sS$ and for $\rho_x$
   as in Proposition \ref{prop: conull sub}, the map $\widehat{\rho}_x:\g\to\Kill(\TM)$ given by:
   \begin{equation*}
     \widehat{\rho}(X)=\rho_x(X)+X^*
   \end{equation*}
   is an injective homomorphism of Lie algebras whose image $\GG(x)$ lies in $\HH$. In particular,
   $\widehat{\rho}_x$ induces on $\HH$ a $\g$-module structure such that $\GG(x)$ is a submodule
   isomorphic to $\g$.
 \end{lemma}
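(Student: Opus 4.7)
The plan is to check three properties in turn: that $\widehat{\rho}_x$ is a Lie algebra homomorphism, that it is injective, and that its image commutes with every $Y^*$. The module structure statement will follow by a standard Jacobi argument.

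First I would verify the bracket relation. Linearity of $\widehat{\rho}_x$ is clear, so I would expand
\begin{equation*}
 [\widehat{\rho}_x(X),\widehat{\rho}_x(Y)]=[\rho_x(X),\rho_x(Y)]+[\rho_x(X),Y^*]+[X^*,\rho_x(Y)]+[X^*,Y^*]
\end{equation*}
and process each term. The first term equals $\rho_x([X,Y])$ because $\rho_x$ is a Lie algebra homomorphism by Proposition \ref{prop: conull sub}(1). The two cross terms evaluate to $[X,Y]^*$ each, using Proposition \ref{prop: conull sub}(3) (after flipping the sign in the second one). Finally, since $X\mapsto X^*$ is an anti-homomorphism, $[X^*,Y^*]=-[X,Y]^*$; this last identity also appears built into Proposition \ref{prop: conull sub}(3). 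Summing the four contributions yields $\rho_x([X,Y])+[X,Y]^*=\widehat{\rho}_x([X,Y])$, as required.

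Next I would address injectivity and the centralizing property. If $\widehat{\rho}_x(X)=0$, evaluation at $x$ and Proposition \ref{prop: conull sub}(2) (which says $\rho_x(X)_x=0$) give $X^*_x=0$; the $\TG$-action on $\TM$ is locally free (as the $G$-action is), so $X=0$. For the centralizer condition, a parallel calculation gives
\begin{equation*}
 [\widehat{\rho}_x(X),Y^*]=[\rho_x(X),Y^*]+[X^*,Y^*]=[X,Y]^*-[X,Y]^*=0,
\end{equation*}
again by Proposition \ref{prop: conull sub}(3), so $\GG(x)=\widehat{\rho}_x(\g)\subseteq\HH$.

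For the final assertion, I would define the $\g$-action on $\HH$ by $X\cdot h=[\widehat{\rho}_x(X),h]$. Well-definedness — i.e.\ that $[\widehat{\rho}_x(X),h]$ is again in $\HH$ — follows from the Jacobi identity: for any $Y\in\g$,
\begin{equation*}
 [Y^*,[\widehat{\rho}_x(X),h]]=[[Y^*,\widehat{\rho}_x(X)],h]+[\widehat{\rho}_x(X),[Y^*,h]]=0,
\end{equation*}
since both inner brackets vanish ($\widehat{\rho}_x(X)\in\HH$ and $h\in\HH$). That this defines a $\g$-module structure, and that $\GG(x)$ is a submodule isomorphic to $\g$ (via the adjoint representation of $\g$ on itself transported through $\widehat{\rho}_x$), is then immediate from the homomorphism property already established. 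I do not foresee a serious obstacle here: the entire argument is bookkeeping with the three identities in Proposition \ref{prop: conull sub}(2)--(3) plus local freeness; the only subtle point is being careful with the sign convention $[X^*,Y^*]=-[X,Y]^*$, which is precisely what makes the cross terms add (rather than cancel) the extra $[X,Y]^*$ coming from $\rho_x$.
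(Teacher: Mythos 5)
Your proposal is correct and follows essentially the same route as the paper's proof: the bracket identity is verified by expanding the four terms and applying Proposition \ref{prop: conull sub}(3), the image is shown to centralize each $Y^*$ by the cancellation $[X,Y]^*-[X,Y]^*=0$, and injectivity follows from evaluation at $x$ together with local freeness. The only differences are cosmetic — the paper decomposes $X=X_1+X_2$ before computing, which your direct expansion avoids, and you make explicit the (routine) Jacobi-identity check that $[\widehat{\rho}_x(X),h]$ stays in $\HH$, which the paper leaves implicit.
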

 \begin{proof}
   The identity in Proposition \ref{prop: conull sub}(3) implies that the image of $\widehat{\rho}_x$
   lies in $\HH$.

   Let $X_i,Y_i\in\g_i$ be, for $i=1,2$, then applying Proposition \ref{prop: conull sub}(3) we have
   \begin{eqnarray*}
     [\widehat{\rho}_x(X_1+X_2),\widehat{\rho}_x(Y_1+Y_2)] &=&
       [\rho_x(X_1+X_2)+(X_1+X_2)^*,\\
        && \negmedspace {}\rho_x(Y_1+Y_2)+(Y_1+Y_2)^*] \\
      &=& [\rho_x(X_1)+X_1^*+\rho_x(X_2)+X_2^*,\\
       && \negmedspace {}\rho_x(Y_1)+Y_1^*+\rho_x(Y_2)+Y_2^*]\\
      &=& \sum_{i,j=1}^{2} [\rho_x(X_i)+X_i^*,\rho_x(Y_j)+Y_j^*]\\
      &=& \sum_{i,j=1}^{2} \big([\rho_x(X_i),\rho_x(Y_j)]+[\rho_x(X_i),Y_j^*]+\\
       && \negmedspace {}[X_i^*,\rho_x(Y_j)]+[X_i^*,Y_j^*]\big) \\
      &=& \sum_{i,j=1}^{2} \big(\rho_x([X_i,Y_j])+[X_i,Y_j]^*+\\
       && \negmedspace {}[X_i,Y_j]^*-[X_i,Y_j]^*\big)\\
      &=& \sum_{i,j=1}^{2} \big(\rho_x([X_i,Y_j])+[X_i,Y_j]^*\big)\\
      &=& \big(\rho_x([X_1+X_2,Y_1+Y_2])+[X_1+X_2,Y_1+Y_2]^*\big)\\
      &=& \widehat{\rho}_x([X_1+X_2,Y_1+Y_2]).
   \end{eqnarray*}
   If $X\in\g$ is an element which satisfies that $\widehat{\rho}_x(X)=0$ then
   $X^*=\rho_x(X)+X^*=0$, which, by locally freeness, implies $X=0$.
 \end{proof}

 Following, we relate the $\g$-module structure associated to $\HH$ and to $T_x\TM$, respectively.

 \begin{lemma}\label{lem: eval}
   Let $\sS$ be as in Proposition \ref{prop: conull sub}. Consider $T_x\TM$ and $\HH$ endowed with the
   $\g$-module structure given by Proposition \ref{prop: conull sub} and Lemma \ref{lem: submodule},
   respectively. Then, for every $x\in\sS$, the evaluation map
   \begin{equation*}
     \mathrm{ev}_x:\HH\to T_x\TM, \qquad Y\mapsto Y_x
   \end{equation*}
   is a homomorphism of $\g$-modules that satisfies $\mathrm{ev}_x(\GG(x))=T_x\FF$. Furthermore, for
   almost every $x\in\sS$ we have $\mathrm{ev}_x(\HH)=T_x\TM$.
 \end{lemma}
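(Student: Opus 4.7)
The plan is to dispatch $(1)$ and $(2)$ by direct computation from the formulas packaged in Lemma \ref{lem: submodule} and Lemma \ref{lem: lambd}, and then to establish the surjectivity statement in $(3)$ by combining $\TG$-invariance of the rank of $\ev$ with lower semi-continuity and an algebraic-hull argument in the spirit of \cite{Q-TINC} and \cite{OQ}.

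For $(1)$, I would fix $X\in\g$ and $Y\in\HH$. Centrality of $Y$ yields $[X^*,Y]=0$, so $[\widehat{\rho}_x(X),Y]=[\rho_x(X),Y]$; since $\rho_x(X)$ vanishes at $x$ by Proposition \ref{prop: conull sub}$(2)$, Lemma \ref{lem: lambd} allows evaluation at $x$ to give $\ev_x(X\cdot Y)=\lambda_x(\rho_x(X))(\ev_x(Y))=X\cdot\ev_x(Y)$. For $(2)$, the same vanishing $\rho_x(X)_x=0$ gives $\ev_x(\widehat{\rho}_x(X))=X^*_x$, and local freeness of the $\TG$-action identifies $\{X^*_x:X\in\g\}$ with $T_x\FF$.

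The substantive statement is $(3)$. Since $\HH$ is intrinsically defined as the centralizer of the $\TG$-action on $\TM$, push-forward by any $g\in\TG$ preserves $\HH$, so $dg_x(\ev_x(\HH))=\ev_{gx}(\HH)$; in particular the function $d(x)=\dim\ev_x(\HH)$ is $\TG$-invariant. Linear independence of finitely many vectors being an open condition, $d$ is also lower semi-continuous. Hence the set $\{x\in\TM:d(x)=d_0\}$, where $d_0$ is the maximum value of $d$, is open, nonempty and $\TG$-invariant; by analyticity its complement is a proper analytic subset of $\TM$, so $d$ attains its maximum on a conull open subset.

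To upgrade $d_0$ to $\dim\TM$, I would observe that on the locus where $\ev(\HH)$ has rank $d_0$ the image $\mathcal{D}=\ev(\HH)$ is an analytic, $\TG$-invariant, involutive distribution (closure of $\HH$ under Lie brackets giving involutivity) containing $T\FF$ by part $(2)$. A strict inclusion $\mathcal{D}\subsetneq T\TM$ on a set of positive measure would produce a proper $\TG$-invariant reduction of the tangent frame bundle incompatible with the algebraic hull $\overline{\mathrm{Ad}(G)}^Z$ of the $\TG$-action on $M\times GL(\g)$ already invoked in Proposition \ref{prop: conull sub}. I expect verifying this incompatibility to be the main obstacle; once it is carried out, following the parallel argument in \cite{Q-TINC} and \cite{OQ} essentially verbatim in the semisimple setting, one gets $d_0=\dim\TM$ and the lemma follows.
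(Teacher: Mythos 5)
Your treatment of the $\g$-module homomorphism property and of $\ev_x(\GG(x))=T_x\FF$ coincides with the paper's computation (centrality gives $[X^*,Y]=0$, vanishing of $\rho_x(X)$ at $x$ lets Lemma \ref{lem: lambd} apply) and is fine. The issue is the final surjectivity claim, which you correctly identify as the substantive one but do not prove. The paper disposes of it by citing the Gromov--Zimmer centralizer theorem (Lemma 4.1 of \cite{Zim-Entropy} together with Theorem 3.1 of \cite{Nevo}), which asserts that for an analytic, finite-volume-preserving action of a semisimple group with a dense orbit on an analytic pseudo-Riemannian manifold, the algebra $\HH$ of Killing fields of $\TM$ centralizing the lifted action is transitive on an open conull dense subset. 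Your rank/semicontinuity argument only shows that $\dim\ev_x(\HH)$ is constant and maximal on an open conull set; it gives no information about the value of that maximum.

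The step you propose to close the gap --- that a proper $\TG$-invariant involutive distribution $\mathcal{D}=\ev(\HH)\supseteq T\FF$ would be ``incompatible with the algebraic hull $\overline{\mathrm{Ad}(G)}^Z$'' --- does not work as stated: proper $\TG$-invariant involutive distributions containing $T\FF$ are not excluded by that computation ($T\FF$ itself is one whenever the action is non-transitive, and the hull statement concerns the derivative cocycle on $T\FF\cong\g$, not reductions of the full frame bundle of $\TM$). The actual mechanism in the cited results is different: one uses the $1$-rigidity of the pseudo-Riemannian metric and the nontriviality of the algebraic hull to produce, at almost every point, local Killing fields realizing every tangent direction and commuting with the $\g$-action, and then uses analyticity, completeness and simple connectedness of $\TM$ to extend them to global elements of $\HH$ (this extension step is where the hypotheses on $\TM$ enter and is precisely what makes $\HH$ large enough). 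Without reproducing that argument or explicitly invoking the centralizer theorem, your proof of the last assertion is incomplete; you acknowledge the obstacle, but the route you sketch for overcoming it is not the right one.
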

 \begin{proof}
   For every $x\in\sS$, let $Y\in\HH$ and $X\in\g$ be given, then
   \begin{eqnarray*}
     \ev_x(X\cdot Y) &=& [\widehat{\rho}_x(X),Y]_x \\
      &=& [\rho_x(X)+X^*,Y]_x \\
      &=& [\rho_x(X),Y]_x+[X^*,Y]_x \\
      &=& [\rho_x(X),Y]_x \\
      &=& \rho_x(X)\cdot Y_x \\
      &=& X\cdot \ev_x(Y)
   \end{eqnarray*}
   where we have used the definition of $\g$-module structures involved and properties of the map $\lambda_x$
   (Lemma \ref{lem: lambd}).
   The last claim follows by an adaptation of the proof of Lemma 4.1 of \cite{Zim-Entropy} and
   Theorem 3.1 of \cite{Nevo}, which establish the transitivity of $\HH$ on an open conull dense.
 \end{proof}

 On a complete manifold every Lie algebra of Killing vector fields can be realized from an isometric
 right action, this is the result of the following Lemma which appears as Lemma 1.11 in \cite{OQ}.

 \begin{lemma}[{\cite[Lemma 1.11]{OQ}}]\label{lem: righ-act}
   Let $N$ be a complete pseudo-Riemannian manifold and $H$ a simply connected Lie group with Lie algebra
   $\h$. If $\psi:\h\to\Kill(N)$ is a homomorphism of Lie algebras, then there exists an isometric right
   $H$-action $N\times H\to N$ such that $\psi(X)=X^*$, for every $X\in\h$. Furthermore, if $N$ is analytic,
   then the $H$-action is analytic as well.
 \end{lemma}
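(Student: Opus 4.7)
The plan is to realize the right $H$-action as the global integration of the image Lie algebra $\psi(\h)\subset\Kill(N)$. The key classical input is Palais's global integration theorem, which says that a finite-dimensional Lie algebra of complete vector fields on a manifold integrates to a right action of the corresponding simply connected Lie group. So the proof breaks into two independent parts: (a) verify completeness of every vector field in $\psi(\h)$, and (b) apply Palais plus the simple-connectedness of $H$ to promote the infinitesimal data to a global action; then check that isometric and analytic properties come along for free.

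For step (a) I would use the standard fact that on a complete pseudo-Riemannian manifold every Killing vector field is complete. The argument is that the flow of a Killing field $Z$ preserves the Levi--Civita connection and hence sends geodesics to geodesics; along any integral curve of $Z$ one obtains a linear ODE on $TN$ (in terms of parallel transport) whose coefficients are bounded on any bounded parameter interval, so the maximal flow interval of $Z$ cannot have a finite endpoint. Completeness of $N$ is what ensures the geodesics needed for this argument themselves exist for all time. Together with the assumption that $\psi$ is a Lie-algebra homomorphism, this gives a finite-dimensional Lie subalgebra of $\Kill(N)$ all of whose elements are complete.

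For step (b) I would apply Palais's theorem in the following form: given such a Lie subalgebra, there is a local right action of any Lie group $H$ with Lie algebra $\h$ whose infinitesimal generator map is $\psi$. The obstruction to extending the local action to a global one lies in $\pi_1(H)$, and since $H$ is simply connected this obstruction vanishes, yielding the desired global right action $N\times H\to N$. Because each generating field $\psi(X)$ is Killing, every one-parameter subgroup $\exp(tX)$ acts by isometries; as these subgroups generate $H$, the whole action is isometric. For the analytic refinement, Killing fields on an analytic pseudo-Riemannian manifold are automatically analytic, and flows of analytic vector fields are jointly analytic, so Palais's construction produces an analytic action. The main subtlety, and the step I would expect to require the most care in a self-contained write-up, is the completeness of individual Killing fields in the pseudo-Riemannian (as opposed to Riemannian) setting, where one cannot appeal to Hopf--Rinow and must instead argue via the geodesic-preserving property of the flow.
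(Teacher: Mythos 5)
Your proposal is correct and is essentially the argument behind the cited result: the paper itself offers no proof here (it simply refers to Lemma 1.11 of \cite{OQ}), and that lemma is proved exactly as you describe, by combining completeness of Killing fields on a geodesically complete pseudo-Riemannian manifold with Palais's integration theorem and the simple connectedness of $H$. The only step worth tightening is your completeness argument in (a), where the cleanest rigorous backing is the Kobayashi--Nomizu theorem that every infinitesimal affine transformation of a complete affine connection is complete (applied to the Levi--Civita connection), rather than the somewhat informal ``bounded coefficients'' ODE sketch.
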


\section{Structure of the centralizer}\label{sect: Struct}

 In this section we assume the case $G=\big(\TSp(n,\R)\times\TSp(1,\R)\big)$ which acts analytical and
 isometrically on a connected, analytic, finite-volume, complete, pseudo-Riemannian manifold $M$ with a
 dense orbit, such that no factors of $G$ acts trivially. Therefore, the results of Section \ref{sect: first res}
 can apply to this case. We also assume that $\dim(M)\leq\dim\big(\TSp(n,\R)\times\TSp(1,\R)\big)+4n=2n^2+5n+3$,
 for $n\geq3$.

 Given the assumptions in the previous paragraph, by Lemma \ref{lem: non-dege}, we have the direct sum
 $TM=T\FF\oplus T\FF^\perp$. Here, we also assume that the manifold $M$ is weakly irreducible.

 \begin{lemma}\label{lem: R(2n,2n)}
   Let $x\in\sS$. Consider $T_x\FF^\perp$ endowed with the
   $\big(\spi(n,\R)\oplus\spi(1,\R)\big)$-module structure given by Proposition \ref{prop: conull sub}(4). Then,
   for almost every $x\in\sS$, the $\big(\spi(n,\R)\oplus\spi(1,\R)\big)$-module $T_x\FF^\perp$ is isomorphic to
   $\R^{2n,2n}$. In particular, $\so(T_x\FF^\perp)$ is isomorphic to $\so(2n,2n)$ as a Lie algebra and as
   an $\big(\spi(n,\R)\oplus\spi(1,\R)\big)$-module.
 \end{lemma}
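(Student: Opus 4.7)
The plan is to identify $T_x\FF^\perp$ by a dimension count followed by a two-step representation-theoretic classification, handling the $\spi(n,\R)$-factor first and then the $\spi(1,\R)$-factor. First I would pin down $\dim(T_x\FF^\perp)=4n$: as recorded in Appendix~A, for $n\geq 3$ the smallest non-trivial representation of $\spi(n,\R)$ admitting an invariant non-degenerate symmetric bilinear form has dimension $4n$, realized by two copies of the standard module $V=\R^{2n}$. This gives $m(\g)=4n$, so Proposition~\ref{prop: dim} together with the standing hypothesis $\dim(M)\leq 2n^2+5n+3$ forces $\dim(M)=\dim(G)+4n$, hence $\dim(T_x\FF^\perp)=4n$.

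Next I would classify the $\spi(n,\R)$-module structure on $T_x\FF^\perp$. For $n\geq 3$ the only irreducible real $\spi(n,\R)$-modules of dimension $\leq 4n$ are the trivial module and $V$, since the next fundamental representation of $\mathfrak{sp}(2n,\C)$ has dimension $2n^2-n-1>4n$. Hence $T_x\FF^\perp\cong aV\oplus b\R$ with $2an+b=4n$. The space of $\spi(n,\R)$-invariant bilinear forms on $V$ is one-dimensional and spanned by the skew symplectic form $\omega$, and $\Hom_{\spi(n,\R)}(V,\R)=0$. So the invariant symmetric form coming from the metric must vanish on each copy of $V$ and have no cross-terms between $V$-summands and trivial summands; non-degeneracy then forces the $V$-summands to pair up among themselves, so $a$ is even. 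Lemma~\ref{lem: TF non-tr} forbids $a=0$, leaving only $a=2$, $b=0$. Thus $T_x\FF^\perp\cong V\otimes W$ as $\spi(n,\R)$-module, where $W$ is a two-dimensional real vector space with the trivial $\spi(n,\R)$-action.

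Finally, since $\g_1$ and $\g_2$ commute in $\g$, the $\spi(1,\R)$-action on $V\otimes W$ commutes with the $\spi(n,\R)$-action, so it is by $\spi(n,\R)$-linear endomorphisms. Absolute irreducibility of $V$ and Schur's lemma give $\mathrm{End}_{\spi(n,\R)}(V\otimes W)\cong\mathrm{End}(W)\cong\gl(2,\R)$, producing a Lie algebra homomorphism $\spi(1,\R)=\sli(2,\R)\to\gl(2,\R)$; by perfectness its image lies in $\sli(2,\R)$, and by Lemma~\ref{lem: TF non-tr} it is nonzero, hence an isomorphism, realizing $W$ as the standard $\spi(1,\R)$-module. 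A short calculation shows the unique invariant symmetric form on $V\otimes W$ is $\omega_V\otimes\omega_W$ and has signature $(2n,2n)$, identifying $T_x\FF^\perp$ with $\R^{2n,2n}$ as a $\g$-module. The ``in particular'' statement for $\so(T_x\FF^\perp)\cong\so(2n,2n)$ is then immediate, both as Lie algebras (by signature) and as $\g$-modules (the conjugation action coincides with the standard $\g$-action on $\so(2n,2n)$ via the embedding just established). The main obstacle is the classification step in the second paragraph: ruling out other irreducible real $\spi(n,\R)$-modules of dimension $\leq 4n$ for $n\geq 3$ and cataloguing invariant symmetric forms on their sums with trivials, which we expect to extract from Appendix~A.
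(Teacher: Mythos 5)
Your proposal is correct, and it follows the same overall skeleton as the paper's proof (dimension count forcing $\dim T_x\FF^\perp=4n$, then the classification of low-dimensional $\spi(n,\R)$-modules with an invariant non-degenerate symmetric form from Appendix~A, then the signature), but it handles two sub-steps by genuinely different means. For the $\spi(n,\R)$-isotypic structure the paper simply invokes Lemma~\ref{lem: dim rep} to get $\R^{2n}\oplus\R^{2n}$ and then computes the signature by exhibiting explicit elements $A_{i,j}\in\spi(n,\R)$ whose invariance equations force a $2n$-dimensional isotropic subspace; you instead write the isotypic component as $V\otimes W$ and observe that any invariant symmetric form is $\omega_V\otimes\beta$ with $\beta$ skew, which both forces the multiplicity to be even and immediately yields the split signature $(2n,2n)$. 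More substantially, the paper does not pin down the $\spi(1,\R)$-module structure inside this proof at all --- it only checks that the two images commute and are each non-trivial in $\so(T_x\FF^\perp)$, and defers the identification of the full $\big(\spi(n,\R)\oplus\spi(1,\R)\big)$-module to the uniqueness-of-embedding results in the appendix (Lemma~\ref{lem: A rep}, Lemma~\ref{lem: inclusion}, Corollary~\ref{cor: inclusion}); your double-commutant argument, $\mathrm{End}_{\spi(n,\R)}(V\otimes W)\cong\gl(2,\R)$ followed by perfectness of $\sli(2,\R)$ and non-triviality from Lemma~\ref{lem: TF non-tr}, identifies $W$ as the standard $\spi(1,\R)$-module directly and makes the ``in particular'' clause self-contained. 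Your route is cleaner and arguably closes a gap the paper papers over; the paper's route has the advantage that its appendix lemmas are reused later (e.g.\ in Remark~\ref{rem: H=0} and Lemma~\ref{lem: H simple}), so the decomposition of $\so(2n,2n)$ into irreducibles has to be computed anyway. The one point you should make explicit is the dichotomy for real irreducibles (Theorem~\ref{th: complexification}): you need that every real irreducible of $\spi(n,\R)$ of dimension $\leq 4n$ is the real form of a complex irreducible rather than a realification, which the appendix settles via the Cartan index; with that reference your classification step is complete.
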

 \begin{proof}
   Since we are assuming that $M$ is weakly irreducible, by Lemma \ref{lem: TF non-tr}, we have that
   for almost every $x\in\sS$, $T_x\FF^\perp$ is a non-trivial $\spi(n,\R)$-module (respectively
   $\spi(1,\R)$-module), therefore it is a non-trivial $\big(\spi(n,\R)\oplus\spi(1,\R)\big)$-module.

   By Proposition \ref{prop: conull sub}(4) and Lemma \ref{lem: non-dege} we have that the map
   $\lambda_x\circ\rho_x$ induces a non-trivial homomorphism of Lie algebras from
   $\spi(n,\R)$ (resp. $\spi(1,\R)$) into $\so(T_x\FF^\perp)$. Since $\spi(n,\R)$ (resp. $\spi(1,\R)$) is
   a simple Lie algebra we have that such homomorphism is injective.

   Let $X\in\spi(n,\R)$ and $Y\in\spi(1,\R)$ be, if $u\in T_x\FF^\perp$ we chose $U$ a vector field on $\TM$ such
   that $U_x=u$, then
   \begin{eqnarray*}
     \big((\lambda_x\circ\rho_x)(X)\circ(\lambda_x\circ\rho_x)(Y)\big)(u) &=& \big((\lambda_x\circ\rho_x)(X)\circ
      (\lambda_x\circ\rho_x)(Y)\big)(U_x) \\
      &=& (\lambda_x\circ\rho_x)(X)\big(\lambda_x(\rho_x(Y))\big)(U_x) \\
      &=& (\lambda_x\circ\rho_x)(X)([\rho_x(Y),U]_x) \\
      &=& (\lambda_x(\rho_x(X))([\rho_x(Y),U]_x) \\
      &=& [\rho_x(X),[\rho_x(Y),U]]_x\\
      &=& [\rho_x(Y),[\rho_x(X),U]]_x+[[\rho_x(X),\rho_x(Y)],U]_x\\
      &=& [\rho_x(Y),[\rho_x(X),U]]_x+[\rho_x([X,Y]),U]_x\\
      &=& [\rho_x(Y),[\rho_x(X),U]]_x\\
      &=& \big((\lambda_x\circ\rho_x)(Y)\circ(\lambda_x\circ\rho_x)(X)\big)(u)
   \end{eqnarray*}
   therefore, we have that $(\lambda_x\circ\rho_x)(\spi(n,\R))$ and $(\lambda_x\circ\rho_x)(\spi(1,\R))$
   commute each other in $\so(T_x\FF^\perp)$. Hence, the map $\lambda_x\circ\rho_x$ induces an injective
   homomorphism of Lie algebras from $\big(\spi(n,\R)\oplus\spi(1,\R)\big)$ into $\so(T_x\FF^\perp)$.

   Since $T_x\FF^\perp$ is a non-trivial $\spi(n,\R)$-module preserving a non-degenerate symmetric bilinear
   form then, by Lemma \ref{lem: dim rep}, we have that $T_x\FF^\perp$ is isomorphic to
   $\R^{2n}\oplus\R^{2n}$ as $\spi(n,\R)$-module. Let $\{e_1,e_2,\ldots,e_n,e_{n+1},e_{n+2},\ldots,e_{n+n}\}$ be
   the canonical base of $\R^{2n}$. Let $i,j\in\{1,2,\ldots,n\}$ be such that $i\neq j$, by the representation
   of $\spi(n,\R)$ on $\R^{2n}$ we can find $A_{i,j}\in\spi(n,\R)$ such that $A_{i,j}(e_i)=e_i$ and
   $A_{i,j}(e_j)=e_j$, therefore if $\LA\cdot,\cdot\RA$ is a non-degenerate symmetric bilinear form on
   $\R^{2n}\oplus\R^{2n}$ preserved by $\spi(n,\R)$ then
   \begin{equation*}
     0=\LA A_{i,j}(e_i),e_j\RA+\LA e_i, A_{i,j}(e_j)\RA=\LA e_i,e_j\RA+\LA e_i,e_j\RA=2\LA e_i,e_j\RA.
   \end{equation*}
   Hence, a subspace of dimension $2n$ is contained in the nullcone of $\LA\cdot,\cdot\RA$ therefore we have 
   that the signature of $\LA\cdot,\cdot\RA$ is $(2n,2n)$.
 \end{proof}

 The results in the previous lemma has an immediate consequence in the proof of the following lemma.

 \begin{lemma}\label{lem: H as spn}
   Let $\sS$ be as in Proposition \ref{prop: conull sub}. Then, for almost every $x\in\sS$ and for the
   $\big(\spi(n,\R)\oplus\spi(1,\R)\big)$-module structure on $\HH$ from Lemma \ref{lem: submodule} there
   is a decomposition into $\big(\spi(n,\R)\oplus\spi(1,\R)\big)$-submodules
   $\HH=\GG(x)\oplus\HH_0(x)\oplus\WW(x)$ such that
   \begin{itemize}
     \item[$(1)$] $\GG(x)=\widehat\rho_x\big(\spi(n,\R)\oplus\spi(1,\R)\big)$ is a Lie subalgebra of $\HH$
      isomorphic to $\big(\spi(n,\R)\oplus\spi(1,\R)\big)$ and $\ev_x(\GG(x))=T_x\FF$.
     \item[$(2)$] $\HH_0(x)=\ker(\ev_x)$, is a Lie subalgebra of $\HH$ isomorphic to a subalgebra
      of $\so(2n,2n)$. Even more, such isomorphism is an isomorphism of
      $\big(\spi(n,\R)\oplus\spi(1,\R)\big)$-modules.
     \item[$(3)$] $\ev_x(\WW(x))=T_x\FF^\perp$ and is isomorphic to $\R^{(2n,2n)}$ as
      $\big(\spi(n,\R)\oplus\spi(1,\R)\big)$-module.
   \end{itemize}
   Here, the evaluation map $\ev_x$ defines an isomorphism  of $\big(\spi(n,\R)\oplus\spi(1,\R)\big)$-modules
   $\GG(x)\oplus\WW(x)\to T_x\TM=T_x\FF\oplus T_x\FF^\perp$ preserving the summands in that order.
 \end{lemma}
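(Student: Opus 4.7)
The plan is to decompose $\HH$ as a $\g$-module, where $\g=\spi(n,\R)\oplus\spi(1,\R)$ is semisimple, using Weyl's complete reducibility theorem together with Lemmas \ref{lem: submodule} and \ref{lem: eval}. First I would note that $\HH\subset\Kill(\TM)$ is finite-dimensional (a Killing field is determined by its $1$-jet at any point), so $\HH$ and every $\g$-submodule of $\HH$ are fully reducible. Item $(1)$ then follows immediately: by Lemma \ref{lem: submodule}, $\widehat\rho_x$ is an injective Lie-algebra homomorphism with image $\GG(x)$, and the identity $\ev_x\circ\widehat\rho_x(X)=X^*_x$, combined with the local freeness of the $G$-action, shows that $\ev_x|_{\GG(x)}:\GG(x)\to T_x\FF$ is a $\g$-module isomorphism; in particular $\GG(x)\cap\HH_0(x)=0$.

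For the splitting, set $\HH_1:=\ev_x^{-1}(T_x\FF^\perp)$. Since $\ev_x$ is a $\g$-module map (Lemma \ref{lem: eval}) and $T_x\FF^\perp$ is a $\g$-submodule of $T_x\TM$ (Proposition \ref{prop: conull sub}(4)), $\HH_1$ is a $\g$-submodule of $\HH$. Item $(1)$ together with the orthogonal splitting $T_x\TM=T_x\FF\oplus T_x\FF^\perp$ from Remark \ref{rem: orthogonal} gives $\HH=\GG(x)\oplus\HH_1$. Since $\HH_0(x)=\ker(\ev_x)\subseteq\HH_1$, complete reducibility yields a $\g$-complement $\WW(x)$ with $\HH_1=\HH_0(x)\oplus\WW(x)$, producing the triple decomposition. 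For generic $x\in\sS$, Lemma \ref{lem: eval} gives $\ev_x(\HH)=T_x\TM$, and a short direct-sum chase then shows that $\ev_x|_{\WW(x)}:\WW(x)\to T_x\FF^\perp$ is a $\g$-module isomorphism; Lemma \ref{lem: R(2n,2n)} identifies the target with $\R^{2n,2n}$, which proves item $(3)$ and the last claim about the restriction of $\ev_x$ to $\GG(x)\oplus\WW(x)$.

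For item $(2)$, I would invoke the rigidity of Killing fields, which makes $\lambda_x:\Kill_0(\TM,x)\to\so(T_x\TM)$ injective. For $h\in\HH_0(x)$ and $X\in\g$, the centralizing property yields $\lambda_x(h)(X^*_x)=[h,X^*]_x=0$, so $\lambda_x(h)$ vanishes on $T_x\FF$; by skew-symmetry $\lambda_x(h)$ then maps $T_x\TM$ into $T_x\FF^\perp$ and restricts there to an element of $\so(T_x\FF^\perp)\cong\so(2n,2n)$ (Lemma \ref{lem: R(2n,2n)}). Lemma \ref{lem: lambd} promotes this to a Lie-algebra embedding $\HH_0(x)\hookrightarrow\so(2n,2n)$. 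For the $\g$-module compatibility, the centralizing property reduces the $\g$-action on $\HH_0(x)$ from $X\cdot h=[\widehat\rho_x(X),h]$ to $[\rho_x(X),h]$, and a further application of Lemma \ref{lem: lambd} shows that $\lambda_x$ intertwines this action with the natural $\g$-action on $\so(T_x\FF^\perp)$ induced by the $\g$-module structure on $T_x\FF^\perp$.

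The main obstacle is arranging the three summands so that the evaluation map behaves correctly on the $\GG(x)$ and $\WW(x)$ pieces simultaneously. The key idea is to apply Weyl's theorem to the intermediate submodule $\HH_1=\ev_x^{-1}(T_x\FF^\perp)$ rather than to $\HH$ itself, since only this choice guarantees that the complement $\WW(x)$ of $\HH_0(x)$ evaluates into $T_x\FF^\perp$ and therefore combines with $\GG(x)$ to reproduce the orthogonal decomposition $T_x\TM=T_x\FF\oplus T_x\FF^\perp$ in the form required by the final sentence of the lemma.
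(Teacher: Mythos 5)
Your proposal is correct and follows essentially the same route as the paper: item (1) from the injectivity of $\widehat\rho_x$ and local freeness, the splitting obtained by taking $\HH_0(x)=\ker(\ev_x)$ together with a $\g$-module complement mapped onto $T_x\FF^\perp$, and item (2) from the $1$-rigidity of pseudo-Riemannian Killing fields, which forces a field in $\HH_0(x)$ to be determined by $\lambda_x^\perp$. The only (harmless) variations are that you split the intermediate submodule $\ev_x^{-1}(T_x\FF^\perp)$ explicitly via Weyl's theorem, whereas the paper first complements $\GG(x)\oplus\HH_0(x)$ and then pulls back $T_x\FF^\perp$, and that you justify the restriction of $\lambda_x(h)$ to $T_x\FF^\perp$ by skew-symmetry rather than by the invariance of the normal bundle under $\Kill_0(\TM,x,\FF)$.
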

 \begin{proof}
   Let $x\in\sS$ which satisfies Lemma \ref{lem: eval} and Lemma \ref{lem: R(2n,2n)}.
   Recall, by Lemma \ref{lem: submodule}, that
   $\GG(x)=\widehat{\rho}_x\big(\spi(n,\R)\oplus\spi(1,\R)\big)$ is a Lie subalgebra contained in $\HH$ and
   isomorphic to $\spi(n,\R)\oplus\spi(1,\R)$.

   Define $\HH_0(x)=\ker(\ev_x)$. By Lemma \ref{lem: eval}, we have that $\HH_0(x)$ is an
   $\big(\spi(n,\R)\oplus\spi(1,\R)\big)$-module of $\HH$. Since, $\HH_0(x)=\HH(x)\cap\Kill_0(\TM,x)$
   it follows that it is a subalgebra.

   Let $Z\in\GG(x)\cap\HH_0(x)$ be, then there is $Y\in\g$ such that $Z=\widehat\rho_x(Y)=\rho_x(Y)+Y^*$.
   The condition $Z\in\HH_0(x)$ implies $0=Z_x$. That is, $Y^*_x=(\rho_x(Y)+Y^*)_x=0$, hence, $Y=0$
   and we have that $\GG(x)\cap\HH_0(x)=\{0\}$. Therefore, by Lemma \ref{lem: eval}, there is a
   subspace $\WW_0(x)$ complementary to $\GG(x)\oplus\HH_0(x)$ in $\HH$.

   Since we have an isomorphism from $\GG(x)\oplus\WW_0(x)$ onto $T_x\TM$ via the evaluation map, we
   choose $\WW(x)$ as the inverse image of $T_x\FF^\perp$ under this isomorphism. Therefore, we obtain
   the desired composition of $\HH$ into $\big(\spi(n,\R)\oplus\spi(1,\R)\big)$-modules.

   Let $\Kill_0(\TM,x,\FF)$ be the Lie algebra of Killing vector fields on $\TM$ which preserves the
   foliation $\FF$ and vanish at $x\in\TM$. Note that every vector field in $\Kill_0(\TM,x,\FF)$ leaves
   invariant the normal bundle, thence the map $\lambda_x$ induces the following homomorphism of Lie
   algebras:
   \begin{equation*}
     \lambda_x^\perp:\Kill_0(\TM,x,\FF)\to\so(T_x\FF^\perp), \qquad X\mapsto\lambda_x(X)|_{T_x\FF^\perp}.
   \end{equation*}
   Observe that both $\rho_x\big(\spi(n,\R)\oplus\spi(1,\R)\big)$ and $\HH_0(x)$ lie inside of
   $\Kill_0(\TM,x,\FF)$.

   Claim 1: \emph{$\lambda_x^\perp$ is injective when it is restricted to
   $\big(\spi(n,\R)\oplus\spi(1,\R)\big)(x)$.} By our choice of the element $x\in\sS$, the proof of
   this claim is similar to the proof of Lemma \ref{lem: R(2n,2n)}.

   Claim 2: \emph{$\lambda_x^\perp$ is injective when it is restricted to $\HH_0(x)$.} Recall that
   pseudo-Riemannian metrics are $1$-rigid (see \cite{GCT-CQ}). Therefore, a Killing vector field is completely
   determined by its $1$-jet at $x$. Let $Z\in\HH_0(x)$, then $\ev_x(Z)=Z_x=0$, so it is determined by its
   values $[Z,V]_x$ for $V$ vector field on a neighborhood of $x$. Since $Z\in\HH$ then $[Z,X^*]_x=0$ for
   all $X\in\big(\spi(n,\R)\oplus\spi(1,\R)\big)$ in this way $[Z,V]_x=0$ when $V_x\in T_x\FF$. Hence, if
   $[Z,X]_x=0$ when $V_x\in T_x\FF^\perp$ this implies that $Z=0$. Thence, we have that $\lambda_x^\perp$
   is injective when it is restricted to $\HH_0(x)$.

   Let $X\in\big(\spi(n,\R)\oplus\spi(1,\R)\big)$ and $Y\in\HH_0(x)$, then
   \begin{eqnarray*}
     \lambda_x^\perp(X\cdot Y) &=& \lambda_x^\perp([\widehat\rho_x(X),Y])=\lambda_x^\perp([\rho_x(X)+X^*,Y]) \\
      &=& \lambda_x^\perp([\rho_x(X)+X^*,Y])=[\lambda_x^\perp(\rho(X)),\lambda_x^\perp(Y)] \\
      &=& X\cdot\lambda_x^\perp(Y),
   \end{eqnarray*}
   which shows that the map $\lambda_x^\perp$ restricted to $\HH_0(x)$ is a homomorphism of
   $\big(\spi(n,\R)\oplus\spi(1,\R)\big)$-modules.
 \end{proof}

 It follows from the proof of Lemma \ref{lem: H as spn} that $\GG(x)\oplus\HH_0(x)$ is a Lie subalgebra which
 contains $\HH_0(x)$ as an ideal. We also have that $T_x\FF$ is a trivial $\HH_0(x)$-module, therefore,
 by Lemma \ref{lem: non-dege}, $T_x\FF^\perp$ is a $\HH_0(x)$-module which is non-trivial if and only if
 $\HH_0(x)$ is non-trivial.

 \begin{remark}\label{rem: rho tilde}
   Let $x\in\sS$ as in the previous lemma, if $X\in\g$ and $u\in T_x\TM$ then, by Lemma \ref{lem: eval},
   there exists $U\in\HH$ such that $U_x=u$, hence
   \begin{equation*}
     X\cdot u=[\rho_x(X),U]_x=[\rho_x(X)+X^*,U]_x=[\widehat{\rho}_x(X),U]_x.
   \end{equation*}
   In particular, we can define an action of $\GG(x)$ on $T_x\FF^\perp$ as following
   \begin{equation}\label{eq: G on F-perp}
     \widehat{\rho}_x(X)\cdot u:=[\widehat{\rho}_x(X),U]_x=[\rho_x(X),U]_x.
   \end{equation}
 \end{remark}

 Let $x\in\sS$ be as in Lemma \ref{lem: H as spn}. If $X_1\in\g_1$, $u\in T_x\FF_2$ and $v\in T_x\FF^\perp$
 then there exist $X_2\in\g_2$ and $V\in\WW(x)$ such that $(X_2^*)_x=\widehat\rho_x(X_2)=u$ and $V_x=v$.
 By the proof of Lemma 
 \ref{lem: TF non-tr} we have that $X_1\cdot\Omega^1_x(u\wedge v)=\Omega^1_x(X_1\cdot(u\wedge v))=0$,
 since $[\widehat\rho_x(X),V]\in\WW(x)$ for every $X\in\g_1\oplus\g_2$. Therefore, by the weak irreducibility
 of $\TM$, we have that $\Omega^1_x=d\omega^1_x|_{\wedge^2T_x\FF^\perp}\neq0$. In a similar way we have that
 $\Omega^2_x=d\omega^2_x|_{\wedge^2T_x\FF^\perp}\neq0$. With the previous result we have that the
 $2$-form $\Omega_x=d\omega_x|_{\wedge^2T_x\FF^\perp}:\wedge^2T_x\FF^\perp\to\g_1\oplus\g_2$ is surjective.

 On the other hand, Lemma $A.5$ in \cite{OQ-Upq} shows the existence of an isomorphism of
 $\so(T_x\FF^\perp)$-modules $\varphi_x:\wedge^2T_x\FF^\perp\to\so(T_x\FF^\perp)$. Therefore, we will denote
 the linear map given by the composition $\Omega_x\circ\varphi^{-1}_x:\so(T_x\FF^\perp)\to\g_1\oplus\g_2$
 with the same symbol $\Omega_x$.

 \begin{proposition}\label{prop: Ker-Omega}
   For $G$ and $M$ as in Proposition \ref{prop: conull sub}. If $T\TM=T\FF\oplus T\FF^\perp$ then
   for almost every $x\in\sS$, the following properties hold:
   \begin{enumerate}
     \item For every $X\in\g_1\oplus\g_2$ and $Y\in\mathfrak{X}(\TM)$ we have
      \begin{equation*}
        \omega_x([\rho_x(X),Y]_x)=[X,\omega_x(Y)].
      \end{equation*}
     \item The linear map $\Omega_x:\wedge^2T_x\FF^\perp\to\g_1\oplus\g_2$ intertwines the homomorphism of Lie
      algebras $\widetilde\rho_x:\g_1\oplus\g_2\to\GG(x)$ for the actions of $\g_1\oplus\g_2$ on
      $\g_1\oplus\g_2$ and of $\GG(x)$ on $T_x\FF^\perp$ via \eqref{eq: G on F-perp}. More precisely
      we have
      \begin{equation*}
        [X,\Omega_x(u\wedge v)]=\Omega_x[\widehat{\rho}_x(X)(u\wedge v)]
      \end{equation*}
      for every $X\in\g_1\oplus\g_2$ and $u,v\in T_x\FF^\perp$.
     \item The linear map $\Omega_x:\so(T_x\FF^\perp)\to\g_1\oplus\g_2$ is $\HH_0(x)$ via $\lambda_x^\perp$.
      More precisely, we have
      \begin{equation*}
        [\lambda_x^\perp(\HH_0(x)),\so(T_x\FF^\perp)]\subset\ker(\Omega_x).
      \end{equation*}
   \end{enumerate}
 \end{proposition}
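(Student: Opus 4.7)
The overall strategy is to reduce all three statements to Jacobi-identity computations, using everywhere that $\omega_x$ is the projection $T_x\TM \to T_x\FF \simeq \g$ along $T_x\FF^\perp$, so in particular $\omega_x$ annihilates $T_x\FF^\perp$. For (1), decompose $Y_x = u_\FF + u_\perp$ according to the $\g$-invariant splitting $T_x\TM = T_x\FF \oplus T_x\FF^\perp$ from Proposition \ref{prop: conull sub}(4); the piece $\lambda_x(\rho_x(X))(u_\perp)$ stays in $T_x\FF^\perp$ and is killed by $\omega_x$, while on $T_x\FF$ the identification $(Y^*)_x \leftrightarrow Y$ together with Proposition \ref{prop: conull sub}(3) shows that $\g$ acts by the adjoint representation, which yields the identity.

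For (2), I would extend $u,v \in T_x\FF^\perp$ by sections $U,V$ of $T\FF^\perp$; since $\omega(U) \equiv 0 \equiv \omega(V)$, the Cartan formula reduces to $\Omega_x(u\wedge v) = -\omega_x([U,V]_x)$. By Proposition \ref{prop: conull sub}(3) the $\rho_x(X)$-flow preserves $T\FF^\perp$, so $[\rho_x(X), U]$ and $[\rho_x(X), V]$ are again sections of $T\FF^\perp$ whose values at $x$ are $X\cdot u$ and $X\cdot v$, and these coincide with $\widehat\rho_x(X)\cdot u$ and $\widehat\rho_x(X)\cdot v$ by \eqref{eq: G on F-perp}. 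The Jacobi identity then collapses the resulting expression to
\[
\Omega_x\bigl(\widehat\rho_x(X)\cdot(u\wedge v)\bigr) = -\omega_x([\rho_x(X),[U,V]]_x),
\]
which by (1) equals $-[X,\omega_x([U,V]_x)] = [X,\Omega_x(u\wedge v)]$.

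For (3) the same scheme works with $Z \in \HH_0(x)$ in place of $\rho_x(X)$, once two structural facts are in place. First, since $Z$ centralizes every $X^*$, its flow commutes with the $\TG$-action and therefore permutes the $\TG$-orbits, so it preserves $\FF$ and, being Killing, also $T\FF^\perp$; hence $[Z,U]$ and $[Z,V]$ are sections of $T\FF^\perp$. Second, $\lambda_x(Z)$ vanishes on $T_x\FF$ because $\lambda_x(Z)(X^*_x) = [Z, X^*]_x = 0$ for all $X \in \g$, and it preserves $T_x\FF^\perp$. Combining these with $Z_x = 0$ and Jacobi,
\[
\Omega_x\bigl(\lambda_x^\perp(Z)u\wedge v + u\wedge \lambda_x^\perp(Z)v\bigr) = -\omega_x(\lambda_x(Z)([U,V]_x)) = 0,
\]
the last equality because $\lambda_x(Z)([U,V]_x) \in T_x\FF^\perp$. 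Under $\varphi_x$, the derivation action of $\lambda_x^\perp(Z)$ on $\wedge^2 T_x\FF^\perp$ corresponds to $\ad_{\lambda_x^\perp(Z)}$ on $\so(T_x\FF^\perp)$, so the vanishing extends by linearity to $\Omega_x([\lambda_x^\perp(Z), A]) = 0$ for every $A \in \so(T_x\FF^\perp)$.

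The main obstacle is securing the two structural facts about $\HH_0(x)$ used in (3): that the $Z$-flow preserves $T\FF^\perp$ and that $\lambda_x(Z)$ vanishes on $T_x\FF$. Both are formal consequences of $\HH$ being the centralizer of the $\TG$-action, but they are exactly what makes the Jacobi expression collapse to zero and what distinguishes the centralizer case (3) from the $\g(x)$-case in (2), where the analogous term $\omega_x([\rho_x(X),[U,V]]_x)$ is emphatically nonzero and produces the intertwining.
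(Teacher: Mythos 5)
Your proof is correct and uses the same Cartan-formula/Jacobi-identity mechanism on which the paper relies: the paper gives no details, deferring to \cite[Proposition 3.10]{OQ-Upq}, but the identity \eqref{eq: g_1 mod} it records is precisely your computation for part (2), and your parts (1) and (3) are the standard companions of that calculation. The only point worth noting is that the two structural facts about $\HH_0(x)$ that your argument for (3) hinges on --- that the flow of $Z\in\HH_0(x)$ preserves $T\FF^\perp$ and that $\lambda_x(Z)$ annihilates $T_x\FF$ --- are indeed available, being established by the paper in the proof of Lemma \ref{lem: H as spn} via the observation that $\HH_0(x)\subset\Kill_0(\TM,x,\FF)$.
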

 \begin{proof}
   The proof is similar to the proof of the Proposition 3.10 in \cite{OQ-Upq}. In that proof the authors prove
   this is true for $\mathcal{D}(\g)=[\g,\g]$, where $\g$ is a simple Lie algebra.

   Those arguments are the same in our case $\g=\g_1\oplus\g_2$, where $\g_1$ and $\g_2$ are simple Lie algebras
   and, therefore, $[\g,\g]=\g$.
 \end{proof}

 \begin{remark}\label{rem: H=0}
   By Lemma \ref{lem: H as spn} we have that $\so(T_x\FF^\perp)\simeq\so(2n,2n)$.
   On the other hand, by the decomposition of $\so(2n,2n)$ as a direct sum of irreducible
   $\big(\spi(n,\R)\oplus\spi(1,\R)\big)$-modules and the results of Proposition \ref{prop: Ker-Omega} we have
   that $\lambda_x^\perp(\HH_0(x))=0$ and, therefore, $\HH_0(x)=0$.

  Let $x\in\sS$ as in Lemma \ref{lem: H as spn}, by the previous remark, Lemma \ref{lem: eval} and
  \eqref{eq: G on F-perp} the evaluation map
  \begin{equation}\label{eq: eval}
    \ev_x:\HH=\GG_1(x)\oplus\GG_2(x)\oplus\WW(x)\to T_x\FF_1\oplus T_x\FF_2\oplus T_x\FF^\perp=T_x\TM
  \end{equation}
  is an isomorphism of $(\g_1\oplus\g_2)$-modules.
 \end{remark}

 \begin{lemma}\label{lem: H simple}
   For the $\big(\TSp(n,\R)\times \TSp(1,\R)\big)$-action on $M$ as in Proposition \ref{prop: conull sub}, assume
   that $n\geq3$. For almost every $x\in\sS$ we have that $\HH$ is a simple Lie algebra isomorphic
   to $\spi(n+1,\R)$.

 \end{lemma}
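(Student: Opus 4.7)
The plan is to exploit the $(\spi(n,\R) \oplus \spi(1,\R))$-module decomposition $\HH = \GG(x) \oplus \WW(x)$ from Lemma \ref{lem: H as spn} and Remark \ref{rem: H=0}, in which $\GG(x) \cong \g$ and $\WW(x) \cong \R^{2n,2n}$ is (as a $\g$-module) the tensor product of the defining representations of $\spi(n,\R)$ and $\spi(1,\R)$. The dimension count already agrees, $\dim \HH = n(2n+1) + 3 + 4n = (n+1)(2n+3) = \dim \spi(n+1,\R)$, and as $\g$-modules $\HH$ matches the decomposition of $\spi(n+1,\R)$ induced by the splitting $\R^{2(n+1)} = \R^{2n} \oplus \R^2$ of the defining symplectic space. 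The task is then to upgrade this $\g$-module matching into a Lie algebra isomorphism.

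First, I would determine the shape of $[\WW(x), \WW(x)]$ by a Schur-type argument. Decomposing
\begin{equation*}
\wedge^2 \WW(x) \cong \spi(n,\R) \oplus \spi(1,\R) \oplus (\wedge^2_0 \R^{2n} \otimes \spi(1,\R)),
\end{equation*}
where $\wedge^2_0 \R^{2n}$ is the $\spi(n,\R)$-invariant complement of the symplectic form inside $\wedge^2 \R^{2n}$ (irreducible of dimension $2n^2-n-1$), and comparing with the isotypic decomposition of $\HH$, the summand $\wedge^2_0 \R^{2n} \otimes \spi(1,\R)$ has no matching isotype in $\HH$ when $n \geq 3$, so the $\g$-equivariant bracket must kill it. Therefore $[\WW(x), \WW(x)] \subset \GG(x)$, and $\g$-equivariance forces the bracket to be determined by one scalar on each of $\g_1$ and $\g_2$.

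Second, I would establish that both scalars are nonzero. The surjectivity of $\Omega_x: \wedge^2 T_x\FF^\perp \to \g_1 \oplus \g_2$ established just before Proposition \ref{prop: Ker-Omega} provides non-triviality on both factors. Transferring through the $\g$-module isomorphism $\ev_x: \WW(x) \to T_x\FF^\perp$, and using the identity $d\omega_x(u,v) = U[\omega(V)]|_x - V[\omega(U)]|_x - \omega_x([U,V]_x)$ together with the vanishing of $\omega(U)$ and $\omega(V)$ at $x$, one relates the $\GG(x)$-projection of $[U,V]$ to $\Omega_x(U_x \wedge V_x)$. A convenient fallback is that vanishing of both scalars would make $\WW(x)$ an abelian ideal of $\HH$; then realizing $\HH$ as Killing fields of a right $H$-action via Lemma \ref{lem: righ-act} would produce an $H$-invariant integrable complement to $T\FF$, yielding a splitting of $\TM$ that contradicts weak irreducibility.

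Once both scalars are nonzero, the Jacobi identity on $\HH$ forces their ratio (up to an overall rescaling of $\WW(x)$) to coincide with the ratio arising in $\spi(n+1,\R)$, since the space of $\g$-equivariant brackets on $\g \oplus \WW(x)$ compatible with Jacobi is one-dimensional. This identifies $\HH \cong \spi(n+1,\R)$ as Lie algebras, and simplicity is immediate. The main obstacle I expect is in the second step: the correction terms $U[\omega(V)]|_x$ for Killing fields $U, V \in \WW(x)$ are not obviously zero (the function $\omega(V)$ vanishes at $x$ but its derivative in directions transverse to $T_x\FF$ need not), so the plan is to control them using the centralizing property $[X^*, V] = 0$ and the $\g$-equivariance already established, or failing that, to invoke the weak-irreducibility argument above.
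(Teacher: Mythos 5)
Your argument is sound and reaches the same conclusion by a genuinely different final step. Both proofs rest on the same inputs: the decomposition $\HH=\GG(x)\oplus\WW(x)$ with $\WW(x)\cong\R^{2n}\otimes\R^2$, the isotype decomposition of $\wedge^2\WW(x)$, and the non-vanishing of $\Omega^1_x$ and $\Omega^2_x$ forced by weak irreducibility. The paper proceeds by a Levi-decomposition dichotomy (either $\rad(\HH)=\WW(x)$, which is excluded because $[\WW(x),\WW(x)]\subset\WW(x)$ would force $\Omega^i_x=0$ on $\wedge^2T_x\FF^\perp$, or $\HH$ is semisimple and then simple), shows $[\WW(x),\WW(x)]=\GG(x)$ so that $\big(\HH,\GG(x)\big)$ is a symmetric pair, and identifies $\HH$ from Berger's Table II. You instead obtain $[\WW(x),\WW(x)]\subset\GG(x)$ directly by Schur --- correctly, since the isotype $\wedge^2_0\R^{2n}\otimes\spi(1,\R)$ does not occur in $\HH$ --- and then determine the two structure constants from the Jacobi identity, which avoids appealing to the classification of symmetric pairs. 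Two caveats. First, rescaling $\WW(x)$ by $t\in\R^\times$ multiplies both scalars by $t^2>0$, so the Jacobi constraint pins down the pair of scalars only up to a \emph{positive} factor; the opposite sign produces the dual symmetric pair $\GG(x)\oplus i\m$, and you must still argue that this dual is again $\spi(n+1,\R)$ (it is: it is a real form of $\spi(n+1,\C)$ containing the split subalgebra $\spi(n,\R)$, hence of real rank at least $n$, which rules out $\spi(p,q)$ for $n\geq3$) --- or simply quote Berger's table at this last step, as the paper does. Second, you are right that relating $\Omega_x(u\wedge v)$ to the $\GG(x)$-component of $[U,V]_x$ for $U,V\in\WW(x)$ requires controlling the terms $U[\omega(V)]_x$, since elements of $\WW(x)$ are not sections of $T\FF^\perp$ away from $x$; note that the paper's own exclusion of the case $\rad(\HH)=\WW(x)$ silently uses the same identity, so this is a shared burden rather than a defect specific to your approach.
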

 \begin{proof}
   We choose an element $x\in\sS$ which satisfies Lemma \ref{lem: H as spn} and Proposition \ref{prop: Ker-Omega}.
   By Remark \ref{rem: H=0} we have that
   $\HH=\GG_1(x)\oplus\GG_2(2)\oplus\WW(x)$ is a Lie algebra where $\GG(x)=\GG_1(x)\oplus\GG_2(x)$,
   $\GG_1(x)=\widehat{\rho}_x(\spi(n,\R))$ and $\GG_2(x)=\widehat{\rho}_x(\spi(1,\R))$.

   Since $\GG(x)=\GG_1(x)\oplus\GG_2(2)$ is a semisimple Lie subalgebra of $\HH$ isomorphic to
   $\spi(n,\R)\oplus\spi(1,\R)$. Let $\s$ be a Levi factor of $\HH$ which contains to $\GG(x)$.

   Recall that the structure of $\HH$ as an $\big(\spi(n,\R)\oplus\spi(1,\R)\big)$-module is given by the
   subalgebra $\GG(x)$ and the Lie brackets in $\HH$. Hence, since $\GG(x)\subset\s$ we have that $\s$
   has a decomposition into $\big(\spi(n,\R)\oplus\spi(1,\R)\big)$-modules. Let $\UU$ be an
   $\big(\spi(n,\R)\oplus\spi(1,\R)\big)$-submodule of $\HH$ such that
   $\s=\GG_1(x)\oplus\GG_2(x)\oplus\UU$. Therefore, we have a decomposition of $\HH$ into
   $\big(\spi(n,\R)\oplus\spi(1,\R)\big)$-module as
   \begin{eqnarray*}
     \HH&=&\s\oplus\rad(\HH)\\
      &=&\GG(x)\oplus\UU\oplus\rad(\HH)\\
      &=&\GG_1(x)\oplus\GG_2(x)\oplus\UU\oplus\rad(\HH)
   \end{eqnarray*}
   that we compare with the decomposition of $\HH$ into $\big(\spi(n,\R)\oplus\spi(1,\R)\big)$-module given
   by Lemma \ref{lem: H as spn} and Remark \ref{rem: rho tilde}, which is
   \begin{equation*}
     \HH=\GG_1(x)\oplus\GG_2(x)\oplus\WW(x).
   \end{equation*}

   Comparing the previous decomposition of $\HH$ as $\big(\spi(n,\R)\oplus\spi(1,\R)\big)$-modules we
   have two possibilities:
   \begin{itemize}
     \item[$(1)$] $\rad(\HH)=\WW(x)$ and $\s=\GG_1(x)\oplus\GG_2(x)$.
     \item[$(2)$] $\HH$ is a semisimple Lie algebra.
   \end{itemize}

   Let us consider the case $\rad(\HH)=\WW(x)$ and $\s=\GG_1(x)\oplus\GG_2(x)$.

   Since $\rad(\HH)$ is an ideal of $\HH$ we have that $[\WW(x),\WW(x)]\subset\WW(x)$, therefore
   $\Omega^i_x([\WW(x),\WW(x)])=0$, for $i=1,2$. By the proof of Lemma \ref{lem: TF non-tr} and \eqref{eq: eval}
   we have that this is not possible. Then case (1) is not possible.

   Now assume that $\HH$ is a semisimple Lie algebra.

   By properties of \eqref{eq: eval} and the action of $\big(\spi(n,\R)\oplus\spi(1,\R)\big)$ into $T_x\FF^\perp$
   we have that $[\GG_i(x),\WW(x)]=\WW(x)$, for $i=1,2$. Hence, if $\h$ is an
   ideal of $\HH$ containing $\GG_i(x)$ then $\h$ must contain $\WW(x)$. Therefore, we have that
   $\HH$ is a simple Lie algebra.

   Since $\WW(x)\simeq T_x\FF^\perp$ as $\big(\spi(n,\R)\oplus\spi(1,\R)\big)$-modules we have then that
   $\wedge^2\WW(x)$ is isomorphic to $\wedge^2T_x\FF^\perp$ as $\big(\spi(n,\R)\oplus\spi(1,\R)\big)$-modules.
   On the other hand, by Lemma A.5 in \cite{OQ-Upq} there is an isomorphism
   $\varphi:\wedge^2T_x\FF^\perp\to\so(T_x\FF^\perp)$ of $\so(T_x\FF^\perp)$-modules.

   From the decomposition of $\so(T_x\FF^\perp)$ (which is isomorphic to $\so(2n,2n)$) as a direct
   sum of $\big(\spi(n,\R)\oplus\spi(1,\R)\big)$-modules (see the proof of Lemma \ref{lem: inclusion}) and
   the fact that $[\WW(x),\WW(x)]$ has non-zero projection on $\GG_1(x)$ and $\GG_2(x)$ we have then
   that $[\WW(x),\WW(x)]=\GG_1(x)\oplus\GG_2(x)$. It follows in particular that
   $\big(\HH,\GG_1(x)\oplus\GG_2(x)\big)$ is a symmetric pair. Therefore,
   by Table II in \cite{Berger}, $\HH$ is isomorphic to $\spi(n+1,\R)$.
 \end{proof}

\section{Proof of the Main Theorems}\label{sect: Proof Main Th}

 In this section we assume the case $G=\big(\TSp(n,\R)\times\TSp(1,\R)\big)$, for $n\geq3$, which acts
 analytical and isometrically on a connected, analytic, finite-volume, complete, pseudo-Riemannian
 manifold $M$ with a dense orbit, such that no factors of $G$ acts trivially. Therefore, the results of
 Section \ref{sect: first res} can apply to this case. We also assume that
 $\dim(M)\leq\dim\big(\TSp(n,\R)\times\TSp(1,\R)\big)+4n$

 Given the assumptions in the previous paragraph, by Lemma \ref{lem: non-dege}, we have the direct sum
 $TM=T\FF\oplus T\FF^\perp$. Here, we also assume that the manifold $M$ is weakly irreducible.

 By results in Section \ref{sect: Struct} we have the existence of a conull subset of $\TM$ which,
 we denote with the same letter $\sS$, such that every element $x\in\sS$ satisfies Lemmas
 \ref{lem: H as spn} and \ref{lem: H simple}. From now on we assume $x_0\in\sS$.

 \begin{lemma}\label{lem: isomorph}
   There is an isomorphism
   \begin{equation*}
     \psi:\spi(n+1,\R)=\spi(n,\R)\oplus\spi(1,\R)\oplus\R^{2n,2n}\to\GG_1(x_0)\oplus\GG_2(x_0)\oplus\WW(x)=\HH
   \end{equation*}
   of Lie algebras that preserves the summands in that order. In particular, $\psi$ is an isomorphism
   of $\big(\spi(n,\R)\oplus\spi(1,\R)\big)$-modules.
 \end{lemma}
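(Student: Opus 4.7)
The plan is to upgrade the abstract Lie-algebra isomorphism $\HH\simeq\spi(n+1,\R)$ produced in Lemma \ref{lem: H simple} into one that respects both the displayed summand decomposition and the $\big(\spi(n,\R)\oplus\spi(1,\R)\big)$-module structure. First, I would fix an arbitrary Lie-algebra isomorphism $\psi_0:\spi(n+1,\R)\to\HH$ given by Lemma \ref{lem: H simple}. The subalgebra $\kk:=\psi_0^{-1}(\GG(x_0))$ is then isomorphic to $\spi(n,\R)\oplus\spi(1,\R)$, and the equality $[\WW(x_0),\WW(x_0)]=\GG(x_0)$ (established inside the proof of Lemma \ref{lem: H simple}) together with $\HH=\GG(x_0)\oplus\WW(x_0)$ forces $(\spi(n+1,\R),\kk)$ to be a symmetric pair whose Cartan-style decomposition matches $\HH=\GG(x_0)\oplus\WW(x_0)$ under $\psi_0$.

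Next, I would invoke Berger's classification (Table II in \cite{Berger}) once more to conclude that, up to an inner automorphism of $\spi(n+1,\R)$, the subalgebra $\kk$ is the standard block-diagonal $\spi(n,\R)\oplus\spi(1,\R)\subset\spi(n+1,\R)$, whose Killing-orthogonal complement is the summand $\R^{2n,2n}$ displayed in the statement. Composing $\psi_0$ with this inner automorphism yields a Lie-algebra isomorphism $\psi$ that carries the standard $\spi(n,\R)\oplus\spi(1,\R)$ onto $\GG(x_0)$ and the $\R^{2n,2n}$ summand onto $\WW(x_0)$. Because $n\geq 3$, the two simple ideals $\spi(n,\R)$ and $\spi(1,\R)$ of $\spi(n,\R)\oplus\spi(1,\R)$ have different dimensions ($n(2n+1)$ and $3$), so $\psi$ must send the $\spi(n,\R)$ factor onto $\GG_1(x_0)=\widehat\rho_{x_0}(\spi(n,\R))$ and the $\spi(1,\R)$ factor onto $\GG_2(x_0)=\widehat\rho_{x_0}(\spi(1,\R))$; no swapping of ideals is possible, so the three summands are preserved in the specified order.

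Finally, the $\big(\spi(n,\R)\oplus\spi(1,\R)\big)$-module structure on each side is defined through the adjoint action of the identified subalgebra (the standard inclusion on $\spi(n+1,\R)$, and $\GG(x_0)$ acting via $\widehat\rho_{x_0}$ on $\HH$ as in Lemma \ref{lem: submodule}); since $\psi$ is a Lie-algebra isomorphism carrying one of these subalgebras onto the other, it automatically intertwines the two adjoint actions, and is therefore an isomorphism of $\big(\spi(n,\R)\oplus\spi(1,\R)\big)$-modules. The main obstacle is the rigidity step: one needs Berger's classification to guarantee that the symmetric pair $(\spi(n+1,\R),\spi(n,\R)\oplus\spi(1,\R))$ is unique up to inner automorphism of the ambient algebra, so that the inner-automorphism adjustment aligning $\kk$ with the standard inclusion is legitimate and the module structure comes along for free.
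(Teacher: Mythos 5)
Your proposal is correct and follows essentially the same route as the paper: both start from the isomorphism $\psi_0:\spi(n+1,\R)\to\HH$ of Lemma \ref{lem: H simple} and use the symmetric-pair structure $[\WW(x_0),\WW(x_0)]=\GG(x_0)$ together with Berger's Table II (via Lemma \ref{lem: n+1 as n+1}) to match the pulled-back decomposition with the standard one. You merely make explicit two points the paper leaves implicit, namely the adjustment by an inner automorphism (legitimate since $\mathrm{Out}(\spi(n+1,\R))$ is trivial) and the dimension count ruling out a swap of the two simple ideals.
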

 \begin{proof}
   Recall that $x_0\in\sS$ satisfies Lemma \ref{lem: H simple}, therefore, there is an algebra isomorphism
   $\psi_0:\spi(n+1,\R)\to\HH$. The inverse image of $\GG_1(x_0)\oplus\GG_2(x_0)$ under the
   isomorphism $\psi_0$ induces the decomposition of $\spi(n+1,\R)$ as a direct sum of
   irreducible $\big(\spi(n,\R)\oplus\spi(1,\R)\big)$-modules, such decomposition satisfies that $\psi_0$
   is an isomorphism of $\big(\spi(n,\R)\oplus\spi(1,\R)\big)$-modules.
 \end{proof}

 Now, let us fix an isomorphism of Lie algebras $\psi:\spi(n+1,\R)\to\HH$ as in Lemma \ref{lem: isomorph}.
 By Lemma \ref{lem: righ-act}, there is an analytic, isometric right
 $\TSp(n+1,\R)$-action on $\TM$. Hence, we can consider the next map:
 \begin{equation*}
   f^\psi:\TSp(n+1,\R)\to\TM, \qquad g\mapsto x_0\cdot g
 \end{equation*}
 which satisfies $df^\psi_{e}(X)=X^*_{x_0}=\psi(X)_{x_0}$ for every $X\in\spi(n+1,\R)$.
 By properties of the map $\psi$ and Lemma \ref{lem: H as spn} we have that $df^\psi_{e}$ is an
 isomorphism that maps $\spi(n,\R)\oplus\spi(1,\R)$ onto $T_{x_0}\FF$ and $\R^{2n,2n}$ onto
 $T_{x_0}\FF^\perp$. The analyticity local diffeomorphism of $f^\psi$ follows
 of the $\TSp(n+1,\R)$-equivariance on its domain.

 With a similar analysis to Lemma 3.2 in \cite{OQ} we have our following result.

 \begin{lemma}\label{lem: metric changed}
   Let $\widehat{g}$ be the metric on $\spi(n+1,\R)$ defined as the pullback under $df^\psi$ of the metric
   $g_{x_0}$ on $T_{x_0}\TM$. Then, $\widehat{g}$ is $\big(\spi(n,\R)\oplus\spi(1,\R)\big)$-invariant.
 \end{lemma}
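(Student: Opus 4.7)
The plan is to reduce the claimed invariance to the skew-symmetry of the isotropy action $\lambda_{x_0}$ on $T_{x_0}\TM$. Unfolding the definitions, $\big(\spi(n,\R)\oplus\spi(1,\R)\big)$-invariance of $\widehat{g}$ is equivalent to the identity
$$g_{x_0}\bigl(\psi([X,Y])_{x_0},\psi(Z)_{x_0}\bigr) + g_{x_0}\bigl(\psi(Y)_{x_0},\psi([X,Z])_{x_0}\bigr) = 0$$
for every $X\in\spi(n,\R)\oplus\spi(1,\R)\subset\spi(n+1,\R)$ and every $Y,Z\in\spi(n+1,\R)$, since by construction $df^\psi_e(W)=\psi(W)_{x_0}$ and $d\bigl(\ad(X)\bigr)$ under $df^\psi_e$ intertwines the Lie brackets.

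The key computation is to rewrite $\psi([X,Y])_{x_0}$ in terms of $\lambda_{x_0}$. By Lemma \ref{lem: isomorph}, $\psi(X)\in\GG(x_0)=\widehat{\rho}_{x_0}\bigl(\spi(n,\R)\oplus\spi(1,\R)\bigr)$, so there exists $X_0\in\spi(n,\R)\oplus\spi(1,\R)$ with $\psi(X)=\rho_{x_0}(X_0)+X_0^*$. Since $\psi$ is a Lie algebra homomorphism and $\psi(Y)\in\HH$ centralizes the $\TG$-action on $\TM$, the bracket $[X_0^*,\psi(Y)]$ vanishes, and hence
$$\psi([X,Y])=[\psi(X),\psi(Y)]=[\rho_{x_0}(X_0),\psi(Y)].$$
Now $\rho_{x_0}(X_0)\in\Kill_0(\TM,x_0)$ by Proposition \ref{prop: conull sub}(2), so Lemma \ref{lem: lambd} gives
$$\psi([X,Y])_{x_0}=[\rho_{x_0}(X_0),\psi(Y)]_{x_0}=\lambda_{x_0}(\rho_{x_0}(X_0))\bigl(\psi(Y)_{x_0}\bigr)=\lambda_{x_0}(\rho_{x_0}(X_0))\bigl(df^\psi_e(Y)\bigr).$$

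The proof then finishes by invoking that Lemma \ref{lem: lambd} also asserts $\lambda_{x_0}(\rho_{x_0}(X_0))\in\so(T_{x_0}\TM)$, i.e.\ it is skew with respect to $g_{x_0}$. Applying the same identification with $Z$ in place of $Y$, the sum in question becomes
$$g_{x_0}\bigl(\lambda_{x_0}(\rho_{x_0}(X_0))df^\psi_e(Y),df^\psi_e(Z)\bigr)+g_{x_0}\bigl(df^\psi_e(Y),\lambda_{x_0}(\rho_{x_0}(X_0))df^\psi_e(Z)\bigr),$$
which vanishes by skew-symmetry. The only mildly subtle point is the identification of $\psi(X)$ as an element $\rho_{x_0}(X_0)+X_0^*$ of $\GG(x_0)$ and the resulting cancellation of the $X_0^*$ contribution against $\psi(Y)\in\HH$, which is precisely where the centralizer property of $\HH$ is used; beyond that the argument is a purely formal computation and I do not foresee any substantial obstacle.
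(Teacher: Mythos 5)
Your proof is correct, and it is essentially the argument the paper has in mind (it defers to Lemma 3.2 of \cite{OQ}, but the mechanism is exactly the one you use): the adjoint action of $\spi(n,\R)\oplus\spi(1,\R)$ on $\spi(n+1,\R)$ is carried by $\psi$ and $\ev_{x_0}$ to the isotropy representation $\lambda_{x_0}\circ\rho_{x_0}$ on $T_{x_0}\TM$, the $X_0^*$-part of $\widehat{\rho}_{x_0}(X_0)$ dropping out because $\psi(Y)\in\HH$, and skew-symmetry of $\lambda_{x_0}(\rho_{x_0}(X_0))\in\so(T_{x_0}\TM)$ gives the invariance. Note that the intertwining identity you re-derive is precisely the content of Lemma \ref{lem: eval} (that $\ev_{x_0}$ is a $\g$-module homomorphism), so you could shorten the middle of your argument by citing it directly.
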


 By the previous Lemma and the results in Lemma \ref{lem: metric inv} we can rescale the metric along
 the bundles $T\FF_1$, $T\FF_2$ and $T\FF^\perp$ in $M$ such that the new metric $\widetilde{g}$ on
 $\TM$ satisfies $(df^\psi)^*(\widetilde{g}_{x_0})=\textsf{K}_{n+1}$, the Killing form of $\spi(n+1,\R)$.

 Since the elements in $\HH$ preserve the decomposition $TM$ as its direct sum
 $TM=T\FF_1\oplus T\FF_2\oplus T\FF^\perp$ then $\HH\subset\Kill(\TM,\widetilde{g})$. Hence, the elements
 of $\HH$ are Killing vector fields for the metric $\widetilde{g}$, therefore $\widetilde{g}$ is
 invariant under the right $\TSp(n+1,\R)$-action.

 In a similar way we can observe that the isometric $\big(\TSp(n,\R)\times\TSp(1,\R)\big)$-action on
 $\TM$ preserves our rescaled metric $\widetilde{g}$. We also note that the metric $\widetilde{g}$
 is the lift of a correspondingly metric $\widetilde{g}$ in $M$.

 Considering the bi-invariant metric on $\TSp(n+1,\R)$ induced by the Killing form $\textsf{K}_{n+1}$,
 which we denote with the same symbol. The previous paragraphs show that the local diffeomorphism
 $f^\psi:(\TSp(n+1,\R),\textsf{K})\to(\TM,\widetilde{g})$ is a local isometry. Therefore, we have that
 $f^\psi$ is an \emph{isometry}, such result follows from Corollary 29 in \cite[p. 202]{Oneill}, the
 simply connectedness of $\TM$ and the completeness of $(\TSp(n+1,\R),\textsf{K})$.

 Hence, by the previous remarks, we have the following result.
 \begin{lemma}\label{lem: almost done}
   Let $M$ and $G=\big(\TSp(n,\R)\times\TSp(1,\R)\big)$ as in Theorem A, then there is an
   analytic diffeomorphism $f:\TSp(n+1,\R)\to\TM$ and an analytic isometric right $\TSp(n+1,\R)$ on
   $\TM$ such that:
   \begin{itemize}
     \item[$(1)$] On $\TM$, the right $\TSp(n+1,\R)$-action and the left
      $\big(\TSp(n,\R)\times\TSp(1,\R)\big)$-action  commute with each other;
     \item[$(2)$] the map $f$ is $\TSp(n+1,\R)$-equivariant for the natural right $\TSp(n+1,\R)$-action
      on $\TSp(n+1,\R)$;
     \item[$(3)$] with the metric $\widetilde{g}$, obtained by rescaling the original metric ($g$ on $M$) on the
      summands of the direct decomposition $T\TM=T\FF_1\oplus T\FF_2\oplus T\FF^\perp$, the map
      $f:\big(\TSp(n+1,\R),\textsf{K}\big)\to(\TM,\widetilde{g})$ is an isometry, where
      $\textsf{K}$ is the metric on $\TSp(n+1,\R)$ induced by the Killing form of its Lie algebra.
   \end{itemize}
 \end{lemma}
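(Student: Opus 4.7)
The plan is essentially to collect and package the constructions already performed in the paragraphs preceding the lemma, and then upgrade the local isometry built there to a global diffeomorphism. I would begin by fixing a generic point $x_0\in\sS$ satisfying Lemmas \ref{lem: H as spn} and \ref{lem: H simple}, and the isomorphism $\psi:\spi(n+1,\R)\to\HH$ provided by Lemma \ref{lem: isomorph}. Composing $\psi$ with the inclusion $\HH\hookrightarrow\Kill(\TM)$ and applying Lemma \ref{lem: righ-act} (using analyticity and completeness of $\TM$, together with simple connectedness of $\TSp(n+1,\R)$) yields an analytic isometric right $\TSp(n+1,\R)$-action on $\TM$ such that $X^{*}=\psi(X)$ for all $X\in\spi(n+1,\R)$. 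Commutation with the left $G$-action then follows automatically from the defining property of $\HH$ as the centralizer of the $\TG$-action, which gives (1).

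Next I would introduce the orbit map $f=f^{\psi}:\TSp(n+1,\R)\to\TM$, $g\mapsto x_0\cdot g$. By construction $f$ is $\TSp(n+1,\R)$-equivariant for the natural right action on the source, giving (2). The differential at the identity is $df_{e}(X)=\psi(X)_{x_0}=\ev_{x_0}(\psi(X))$, so via Lemma \ref{lem: H as spn} and Remark \ref{rem: H=0}, together with the summand-preserving property of $\psi$ from Lemma \ref{lem: isomorph}, $df_{e}$ is a linear isomorphism sending $\spi(n,\R)\oplus\spi(1,\R)$ onto $T_{x_0}\FF$ and $\R^{2n,2n}$ onto $T_{x_0}\FF^{\perp}$. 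By $\TSp(n+1,\R)$-equivariance the differential is an isomorphism everywhere, so $f$ is an analytic local diffeomorphism.

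For (3), I would use Lemma \ref{lem: metric changed} to transport the metric $g_{x_0}$ to an $\big(\spi(n,\R)\oplus\spi(1,\R)\big)$-invariant form $\widehat g$ on $\spi(n+1,\R)$ and then invoke Lemma \ref{lem: metric inv} to rescale $g$ independently along $T\FF_1$, $T\FF_2$ and $T\FF^{\perp}$ so that the pulled-back form at the identity becomes the Killing form $\textsf{K}_{n+1}$. The rescaled metric $\widetilde g$ is still preserved by $\big(\TSp(n,\R)\times\TSp(1,\R)\big)$ (because the rescaling is done along $G$-invariant subbundles) and, crucially, the vector fields of $\HH$ continue to be Killing for $\widetilde g$ since they preserve the direct sum decomposition $T\TM=T\FF_1\oplus T\FF_2\oplus T\FF^{\perp}$; hence $\widetilde g$ is invariant under the right $\TSp(n+1,\R)$-action. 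Right invariance together with the equality at $e$ of the pulled-back metric and $\textsf{K}_{n+1}$ then forces $f^{*}\widetilde g=\textsf{K}_{n+1}$ globally, so $f$ is a local isometry.

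The only nontrivial remaining step, and the point that I expect to require the most care, is passing from local to global: upgrading the local isometry $f$ to an honest diffeomorphism. Here I would invoke Corollary 29 in \cite[p.\ 202]{Oneill}, using that $\big(\TSp(n+1,\R),\textsf{K}_{n+1}\big)$ is complete (as a bi-invariant pseudo-Riemannian metric on a Lie group), that $\TM$ is connected, and that $\TM$ is simply connected, to conclude that $f$ is a covering map and in fact a diffeomorphism. This yields all three assertions of the lemma simultaneously, completing the proof.
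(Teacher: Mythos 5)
Your proposal is correct and follows essentially the same route as the paper: the orbit map $f^\psi(g)=x_0\cdot g$ from the right action supplied by Lemma \ref{lem: righ-act}, the rescaling of the metric on the summands of $T\TM=T\FF_1\oplus T\FF_2\oplus T\FF^\perp$ via Lemmas \ref{lem: metric changed} and \ref{lem: metric inv} so that $(df^\psi)^*\widetilde g_{x_0}=\textsf{K}_{n+1}$, the observation that $\HH$ remains Killing for $\widetilde g$ so the local isometry propagates by equivariance, and finally Corollary 29 of \cite{Oneill} together with completeness of $\big(\TSp(n+1,\R),\textsf{K}\big)$ and simple connectedness of $\TM$ to upgrade to a global isometry. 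No gaps.
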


 First, by the results in \cite{Muller} we have that $\Iso(\TSp(n+1,\R))$ has finite many components and
 that $\Iso_0(\TSp(n+1,\R))=L(\TSp(n+1,\R))R(\TSp(n+1,\R))$, where $L(g)$ (resp. $R(g)$) is the left
 (resp. right) translation map on $\TSp(n+1,\R)$ by the element $g\in\TSp(n+1,\R)$.

 Let $\varrho:\TSp(n,\R)\times\TSp(1,\R)\to\Iso_0(\TSp(n+1,\R))$ be the homomorphism generated by the left
 action of $\big(\TSp(n,\R)\times\TSp(1,\R)\big)$ on $\TSp(n+1,\R)$. By the previous paragraph, there are
 two homomorphism $\varrho_L,\varrho_R:\TSp(n,\R)\times\TSp(1,\R)\to\TSp(n+1,\R)$ such that
 $\varrho(h)=L(\varrho_L(h))R(\varrho_R(h))$ for every $h\in\big(\TSp(n,\R)\times\TSp(1,\R)\big)$.

 Since the right $\TSp(n+1,\R)$-action and the left action of $\big(\TSp(n,\R)\times\TSp(1,\R)\big)$ on
 $\TSp(n+1,\R)$ commute each other then $\varrho_R\big(\TSp(n,\R)\times\TSp(1,\R)\big)$ lies in the center
 of $\TSp(n+1,\R)$, such property and the fact that $\big(\TSp(n,\R)\times\TSp(1,\R)\big)$ is connected
 imply that $\varrho_R\big(\TSp(n,\R)\times\TSp(1,\R)\big)=e$. Therefore $\varrho=L\circ\varrho_L$.

 On the other hand, by Lemma \ref{lem: almost done} and the previous paragraphs we have that the subgroup
 $\Sigma_0=\pi_1(M)\cap\Iso_0(\TSp(n+1,\R))$ (since $\pi_1(M)\subset\Iso(\TSp(n+1,\R))$) has finite index in
 $\pi_1(M)$. Considering that the action of
 $\big(\TSp(n,\R)\times\TSp(1,\R)\big)$ on $\TSp(n+1,\R)$ is the lift of an action on $M$ we have that the
 elements in $\Sigma_0$ commute with the elements of $\varrho\big(\TSp(n,\R)\times\TSp(1,\R)\big)$.
 Therefore, since $\sigma_0=L(\sigma_1)R(\sigma_2)$ for some $\sigma_1,\sigma_2\in\Sigma$ then
 $L(\sigma_1)\circ L(\varrho_L(h))=L(\varrho_L(h))\circ L(\sigma_1)$ for every
 $h\in\big(\TSp(n,\R)\times\TSp(1,\R)\big)$, hence
 $\Sigma_0\subset L(Z_{\TSp(n+1,\R)}(\TSp(n,\R)\times\TSp(1,\R)))R(\TSp(n,\R)\times\TSp(1,\R))$.

 By the results in Lemma \ref{lem: centralizer} we have that $R(\TSp(n,\R)\times\TSp(1,\R))$ has finite
 index in $L(Z_{\TSp(n+1,\R)}(\TSp(n,\R)\times\TSp(1,\R)))R(\TSp(n,\R)\times\TSp(1,\R))$. In particular,
 $\Sigma=\Sigma_0\cap R(\TSp(n,\R)\times\TSp(1,\R))$ is a finite index subgroup of
 $\Sigma_0$, and therefore it has finite index in $\pi_1(M)$.

 The natural identification of $R(\TSp(n,\R)\times\TSp(1,\R))$ with $\TSp(n,\R)\times\TSp(1,\R)$
 induces to consider $\Sigma$ as a discrete subgroup of $\TSp(n,\R)\times\TSp(1,\R)$ such
 that the quotient map $\TSp(n+1,\R)/\Sigma$ is a finite covering map of the manifold $M$. Let
 $\xi:\TSp(n+1,\R)/\sigma\to M$ be the finite covering map, previously defined, for the
 left action of $\TSp(n,\R)\times\TSp(1,\R)$ on $\TSp(n+1,\R)/\Sigma$ given by the homomorphism
 $\varrho_L:\TSp(n,\R)\times\TSp(1,\R)\to\TSp(n+1,\R)$, we have that the map $\xi$ is
 $\TSp(n,\R)\times\TSp(1,\R)$-equivariant. We also observe that $\xi$ is an isometry for the metric
 $\widehat{g}$, as it is defined in Lemma \ref{lem: metric changed}.

 Finally, in order to complete the proof of Theorem A we only need to prove that the subgroup
 $\Sigma$ is a lattice in $\TSp(n+1,\R)$. Such result is shown in the following lemma which proof
 is similar to Lemma 3.4 in \cite{OQ}.

 \begin{lemma}
   Let $\mathrm{vol}_{g}$ and $\mathrm{vol}_{\widehat{g}}$ define the volume elements on $M$, for the
   original metric and the rescaled metric $\widehat{g}$, in Lemma \ref{lem: metric changed}, respectively.
   Then, there is a constant $C_{\widehat{g}}>0$ such that
   $\mathrm{vol}_{\widehat{g}}=C_{\widehat{g}}\mathrm{vol}_g$.
 \end{lemma}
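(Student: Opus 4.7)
The plan is to show that the rescaling producing $\widehat{g}$ from $g$ acts by constant positive scalars on each summand of the orthogonal decomposition $TM = T\FF_1 \oplus T\FF_2 \oplus T\FF^\perp$, after which the proportionality of volume forms follows from a routine determinantal computation in an adapted frame.

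The first step is to argue that on each summand the rescaling factor is a single real constant rather than a function on $M$. On $T\FF_i$ this follows from Lemma \ref{lem: Ad-inv}: the restriction of $g$ corresponds to an $\mathrm{Ad}(G_i)$-invariant symmetric bilinear form on the simple Lie algebra $\g_i$, and such forms are unique up to scalar by Schur's lemma, so the rescaling amounts to multiplication by a positive constant $c_i$. On $T\FF^\perp$, Lemma \ref{lem: R(2n,2n)} identifies each fiber $T_x\FF^\perp$ with $\R^{2n,2n}$ as a $\big(\spi(n,\R)\oplus\spi(1,\R)\big)$-module, and an analogous Schur-type argument shows that the space of invariant non-degenerate symmetric bilinear forms on this module is one-dimensional. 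Combined with the $G$-invariance of both metrics, the density of the $G$-orbit, and the analyticity of $g$, $\widehat{g}$, and $M$, this forces the rescaling on $T\FF^\perp$ to be by a single positive constant $c_3$ as well.

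With the rescaling understood, I would pick a local frame $(e_1,\ldots,e_d)$ of $TM$ adapted to the orthogonal decomposition (using Remark \ref{rem: orthogonal} for orthogonality of $T\FF_1$ and $T\FF_2$, and the definition of $T\FF^\perp$). The Gram matrices of $g$ and $\widehat{g}$ in this frame are block-diagonal, with corresponding blocks related by the scalars $c_1$, $c_2$, $c_3$. Taking determinants and square roots yields
\begin{equation*}
  \mathrm{vol}_{\widehat{g}} \;=\; c_1^{\dim G_1/2}\, c_2^{\dim G_2/2}\, c_3^{\dim T\FF^\perp/2}\, \mathrm{vol}_g,
\end{equation*}
so one simply sets $C_{\widehat{g}}$ equal to the positive constant on the right.

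The main subtlety is the uniformity in $x \in M$ of the rescaling factor on $T\FF^\perp$; the essential ingredients are the $G$-invariance of both metrics, the density of the $G$-action, and the irreducibility of $T_x\FF^\perp$ as a $\big(\spi(n,\R)\oplus\spi(1,\R)\big)$-module, which together force the $G$-invariant metrics on $T\FF^\perp$ to form a one-parameter family and hence yield a global constant rescaling factor. Once this is secured, the rest of the argument is a purely linear-algebraic computation, in close parallel with Lemma 3.4 of \cite{OQ}.
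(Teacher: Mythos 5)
Your proposal is correct and follows essentially the same route as the paper, which gives no argument of its own and simply defers to Lemma 3.4 of \cite{OQ}: the rescaled metric differs from $g$ by fixed scalars on the orthogonal summands $T\FF_1$, $T\FF_2$, $T\FF^\perp$, so the Gram matrices are block-diagonal and the volume densities $|\det|^{1/2}$ differ by a constant. Two minor remarks: the uniformity in $x$ that you labor over is automatic here, since by construction the rescaling multiplies $g$ on each subbundle by a single global constant determined at $x_0$ via Lemma \ref{lem: metric inv}; and those constants need not be positive for a pseudo-Riemannian metric, but $C_{\widehat{g}}$ is still positive because the volume element involves the absolute value of the determinant.
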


\appendix\section{Modules and representations}
 We start this appendix with the following result about decomposition into irreducible modules of
 non-compact simple Lie groups
 \begin{lemma}\label{lem: A rep}
   Let $G\subsetneq H$ be non-compact simple Lie groups and $(\pi,V)$ an irreducible representation of $H$ such that
   \begin{equation*}
     \pi|_{G}\simeq W\oplus \bigoplus_j W_j
   \end{equation*}
   is its direct sum decomposition into irreducible $G$-modules such that $W$ has multiplicity $1$ in $V$.
   If $\widetilde{W}\subset V$ is a $G$-invariant irreducible representation with $\widetilde{W}\simeq W$
   then for every $g\in G$ we have that $\pi(g)(\widetilde{W})=W$.
 \end{lemma}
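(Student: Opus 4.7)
The plan is to reduce the assertion to showing $\widetilde{W}=W$ as subspaces of $V$. Since $\widetilde{W}$ is $G$-invariant by hypothesis, once this equality is established we have $\pi(g)\widetilde{W}=\widetilde{W}=W$ for every $g\in G$, which is exactly the claim. Thus the real content of the lemma is the uniqueness of the submodule of isomorphism type $W$ inside $V$ under the multiplicity-one assumption, and the conclusion about $\pi(g)$ is a formal consequence.

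To prove $\widetilde{W}=W$, I would consider the $G$-equivariant projections $p:V\to W$ and $p_j:V\to W_j$ associated with the given decomposition, and restrict each $p_j$ to $\widetilde{W}$. The map $p_j|_{\widetilde{W}}:\widetilde{W}\to W_j$ is then a $G$-homomorphism between irreducible $G$-modules. Since $W$ has multiplicity one in $V$, none of the $W_j$ is isomorphic to $W$; together with $\widetilde{W}\simeq W$ this gives $\widetilde{W}\not\simeq W_j$ for each $j$. Schur's lemma, applied to irreducible $G$-modules over $\R$ (a nonzero equivariant map between irreducibles would be an isomorphism since its kernel and image are submodules), forces $p_j|_{\widetilde{W}}=0$ for every $j$. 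Therefore $\widetilde{W}\subset W$, and since both are irreducible $G$-submodules of the same isomorphism type they have the same dimension, so $\widetilde{W}=W$.

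I do not anticipate any serious obstacle: the proof is just the standard uniqueness statement for isotypic components in a multiplicity-free situation. The hypotheses that $G\subsetneq H$ are non-compact simple Lie groups and that $\pi$ is irreducible on $H$ play no explicit role in the argument; they merely fix the setting in which the lemma will be invoked in the main body of the paper. The one point to be mildly careful about is invoking Schur's lemma over $\R$ rather than $\C$, but this causes no difficulty here because we only need the vanishing statement for nonisomorphic irreducibles, which holds over any field.
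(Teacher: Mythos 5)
Your proof is correct. The paper actually states this lemma without any proof, so there is nothing to compare against; your argument — reduce the claim to $\widetilde{W}=W$ via the $G$-invariance of $\widetilde{W}$, then kill the projections $p_j|_{\widetilde{W}}$ by Schur's lemma using the multiplicity-one hypothesis — is the standard uniqueness-of-isotypic-component argument and is surely what the authors intended. Your side remarks are also accurate: the simplicity and non-compactness of $G\subsetneq H$ and the $H$-irreducibility of $V$ are not used, and the real-versus-complex issue in Schur's lemma is harmless since only the vanishing of maps between non-isomorphic irreducibles is needed; the one implicit point worth making explicit is that the statement tacitly fixes an internal decomposition $V=W\oplus\bigoplus_j W_j$ so that $W$ is a genuine subspace of $V$.
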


 The previous lemma will be used to understand the inclusion of $\spi(n,\R)$ in $\so(2n,2n)$, for $n\geq3$.
 In search of such understanding we recall the following:
 \begin{equation}\label{eq: sp(n,R)}
   \spi(n,\R)=\big\{M\in \gl_{2n}(\R)\big|M^T\widetilde J+\widetilde JM=0\big\},
 \end{equation}
 where
 \begin{equation*}
   \widetilde J=\begin{bmatrix}
       0 & I_{n} \\
       -I_{n} & 0
     \end{bmatrix}.
 \end{equation*}
 In a more explicitly way, we have
 \begin{equation*}
   \spi(n,\R)=\bigg\{\begin{bmatrix}
                      A & B \\
                      C & -A^T
                    \end{bmatrix}\in\gl_{2n}(\R)\bigg|A\in\gl_{n}(\R), B^T=B, C^T=C\bigg\}.
 \end{equation*}
 Note, by Table II of \cite{Berger}, that
 $(\sli(2n,\R),\spi(n,\R))$ is a symmetric pair. Even more $\sli(2n,\R)=\spi(n,\R)\oplus\pi_2$
 where $\pi_2$ is the irreducible representation of $\spi(n,\R)$ corresponding to its second highest
 weight $\varpi_2$ (see Theorem 5.5.15 and its immediate consequences in \cite{Goodman}).

 On the other hand, we have that
 \begin{equation}\label{eq: so(2n,2n)}
   \so(2n,2n)=\big\{M\in\gl_{4n}(\R)\big|M^TJ+JM=0\big\},
 \end{equation}
 where
 \begin{equation*}
   J=\begin{bmatrix}
       0 & I_{2n} \\
       I_{2n} & 0
     \end{bmatrix}.
 \end{equation*}
 That is
 \begin{equation*}
   \so(2n,2n)=\bigg\{\begin{bmatrix}
                      A_0 & B_0 \\
                      C_0 & -A_0^T
                    \end{bmatrix}\in\gl_{4n}(\R)\bigg|A_0\in\gl_{2n}(\R), B_0,C_0\in\so(n,\R)\bigg\}.
 \end{equation*}

 Since $(\so(2n,2n),\sli(2n,\R)\oplus\R)$ is also a symmetric pair (see \cite[Table II]{Berger}) then we have a
 guarantee of an inclusion of $\spi(n,\R)$ into $\so(2n,2n)$. Therefore,
 an inclusion of the Lie algebra $\spi(n,\R)$ into the Lie algebra $\so(2n,2n)$ is given
 in the following way
 \begin{eqnarray*}
   \spi(n,\R)&\hookrightarrow&\so(2n,2n)\\
   \begin{bmatrix}
                      A & B \\
                      C & -A^T
                    \end{bmatrix}&\mapsto&
   \begin{bmatrix}
     A & B & 0 & 0 \\
     C & -A^T & 0 & 0 \\
     0 & 0 & -A^T & -C \\
     0 & 0 & -B & A
   \end{bmatrix}.
 \end{eqnarray*}
 Recall that $\gl_{2n}(\R)=\sli(2n,\R)\oplus\R$ then, by above, we have that
 $\gl(2n,\R)=\spi(n,\R)\oplus\pi_2\oplus\R$. On the other hand, by Table II in \cite{Berger}, we have that
 \begin{equation*}
   \so(2n,2n)=\sli(2n,\R)\oplus\R\oplus\pi^1_2(\sli(2n,\R))\oplus\pi^2_2(\sli(2n,\R))
 \end{equation*}
 where $\pi^i_2(\sli(2n,\R))$ denotes the irreducible representation of $\sli(n,\R)$ corresponding
 to its second highest weight, for $i=1,2$. Because $\pi^i_2(\sli(2n,\R))=\pi_2\oplus\R$, its
 decomposition as a direct sum of irreducible $\spi(n,\R)$-modules, then
 \begin{equation*}
   \so(2n,2n)\simeq\spi(n,\R)\oplus\bigoplus_{i=1}^3\pi^i_2\oplus\bigoplus_{i=1}^3\pi^i_0,
 \end{equation*}
 as a direct sum of irreducible $\spi(n,\R)$-modules where $\pi^i_0:=\R$ is the trivial representation
 of $\spi(n,\R)$ corresponding to its highest weight $\varpi_0$, for $i=1,2,3$.

 Let $W_0\in\so(2n,2n)$ be an  element which commutes with every element of the previous inclusion, of $\spi(n,\R)$
 into $\so(2n,2n)$, then taking particular elements in $\spi(n,\R)$, it can be proven that
 \begin{equation*}
   W_0=\begin{bmatrix}
     aI_n &  & 0 & bI_n \\
     0 & aI_n & -bI_n & 0 \\
     0 & -cI_n & -aI_n & 0 \\
     cI_n & 0 & 0 & -aI_n
   \end{bmatrix}
 \end{equation*}
 for some $a,b,c\in\R$. Therefore, with the above inclusion of $\spi(n,\R)$ into $\so(2n,2n)$ we have
 that an inclusion of $\spi(n,\R)\oplus\spi(1,\R)$ into $\so(2n,2n)$ is given as follow
 \begin{eqnarray*}
   \spi(n,\R)\oplus\spi(1,\R)&\hookrightarrow&\so(2n,2n)\\
   \begin{bmatrix}
                      A & B \\
                      C & -A^T
                    \end{bmatrix}+
   \begin{bmatrix}
                      a & b \\
                      c & -a
                    \end{bmatrix}
                    &\mapsto&
   \begin{bmatrix}
     A+aI_n & B & 0 & bI_n \\
     C & -A^T+aI_n & -bI_n & 0 \\
     0 & -cI_n & -A^T-aI_n & -C \\
     cI_n & 0 & -B & A-aI_n
   \end{bmatrix}.
 \end{eqnarray*}

 \begin{lemma}\label{lem: inclusion}
   There is, up to isomorphism, an unique inclusion of $\spi(n,\R)\oplus\spi(1,\R)$ into $\so(2n,2n)$.
 \end{lemma}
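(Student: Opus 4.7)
The plan is to show that any Lie algebra embedding $\iota:\spi(n,\R)\oplus\spi(1,\R)\hookrightarrow\so(2n,2n)$ is conjugate, via an element of $\mathrm{O}(2n,2n)$, to the explicit inclusion exhibited above. First, I would view such an $\iota$ as endowing the standard module $\R^{4n}$ of $\so(2n,2n)$ with a faithful $\big(\spi(n,\R)\oplus\spi(1,\R)\big)$-module structure that preserves the nondegenerate symmetric bilinear form of signature $(2n,2n)$. Since the two simple summands commute inside $\so(2n,2n)$, a standard application of Schur's lemma decomposes $\R^{4n}$ as a direct sum of external tensor products $V_i\boxtimes W_j$, where each $V_i$ is an irreducible $\spi(n,\R)$-module and each $W_j$ is an irreducible $\spi(1,\R)$-module.

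Next I would use the dimension constraint $\sum_{i,j}(\dim V_i)(\dim W_j)=4n$ together with the fact that, for $n\geq 3$, the smallest nontrivial irreducible $\spi(n,\R)$-module has dimension $2n$ (by Lemma \ref{lem: dim rep}), while the smallest nontrivial irreducible $\spi(1,\R)\cong\sli(2,\R)$-module has dimension $2$. Combined with the faithfulness of $\iota$ on each simple factor, a short case analysis forces the decomposition to consist of a single summand $V\boxtimes W$ with $V\cong\R^{2n}$ the standard representation of $\spi(n,\R)$ and $W\cong\R^2$ the standard representation of $\spi(1,\R)$. Indeed, if $\spi(n,\R)$ acted trivially on some summand of positive dimension, the remaining summands would already account for at least $2n\cdot 2=4n$ dimensions, leaving no room for a trivial-times-trivial term, and analogously for $\spi(1,\R)$.

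Once the module structure $\R^{4n}\cong\R^{2n}\boxtimes\R^2$ is pinned down, I would invoke Schur's lemma a second time to conclude that the invariant nondegenerate symmetric bilinear form is unique up to scalar. It arises naturally as the tensor product of the canonical symplectic forms on the two factors, since the tensor of two skew nondegenerate forms is symmetric nondegenerate; a direct signature computation (or reference to the standard fact about real forms of $\mathrm{Sp}\otimes\mathrm{Sp}$) then verifies that the signature is $(2n,2n)$. Consequently, any two inclusions $\iota,\iota'$ are intertwined by an element of $\mathrm{GL}(\R^{4n})$ that preserves the form up to scalar, hence lies (after rescaling) in $\mathrm{O}(2n,2n)$, proving that $\iota$ and $\iota'$ differ by an automorphism of $\so(2n,2n)$.

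The main obstacle is the combinatorial step that rules out multiple summands in the tensor-product decomposition: one must keep careful track of how faithfulness on \emph{each} simple factor interacts with the various ways that $4n$ can be written as a sum of products $(\dim V_i)(\dim W_j)$. The remainder is then a direct application of Schur's lemma and a routine signature computation, so this single combinatorial step is where the argument really has content.
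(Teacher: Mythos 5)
Your route is genuinely different from the paper's: you analyze the standard module $\R^{4n}$ of $\so(2n,2n)$ as a $\big(\spi(n,\R)\oplus\spi(1,\R)\big)$-module, whereas the paper works with the adjoint module, decomposing $\so(2n,2n)$ into irreducible $\spi(n,\R)$-summands, invoking the multiplicity-one statement of Lemma \ref{lem: A rep}, and then computing the centralizer $Z_{\so(2n,2n)}(\spi(n,\R))\cong\sli(2,\R)$ explicitly to pin down the $\spi(1,\R)$ factor. Your approach is in principle cleaner, but as written it has a gap at exactly the step you flag as the crux.

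The combinatorial step is wrong as justified. Dimension counting plus faithfulness on each simple factor does \emph{not} force a single summand $\R^{2n}\boxtimes\R^{2}$: the decomposition
\begin{equation*}
  \R^{4n}\;\cong\;\big(\R^{2n}\boxtimes\R\big)\oplus\big(\R\boxtimes\R^{2}\big)\oplus\R^{2n-2}
\end{equation*}
has total dimension $2n+2+(2n-2)=4n$ and is faithful on both $\spi(n,\R)$ and $\spi(1,\R)$, so your claim that the remaining summands would ``already account for at least $2n\cdot 2=4n$ dimensions'' fails: faithfulness of $\spi(n,\R)$ only costs $2n$ dimensions when the accompanying $\spi(1,\R)$-factor is trivial. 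To rule this (and the variant with the $3$-dimensional adjoint of $\spi(1,\R)$ in place of $\R\boxtimes\R^{2}$) out, you must use the invariant symmetric form, not just dimensions: the summand $\R^{2n}\boxtimes\R$ is not isomorphic to the dual of any other summand, so by Schur's lemma it is orthogonal to everything else, hence the form restricts nondegenerately to it; but the standard $\spi(n,\R)$-module $\R^{2n}$ admits no invariant nondegenerate \emph{symmetric} form (Remark \ref{rem: varpi1}), a contradiction. With that repair the case analysis does collapse to $\R^{2n}\boxtimes\R^{2}$ and the rest of your argument (uniqueness of the form up to scalar by Schur, signature $(2n,2n)$ for the tensor of two symplectic forms, conjugation inside $\mathrm{O}(2n,2n)$) goes through. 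A minor additional point worth a sentence: the decomposition of a real module over $\g_1\oplus\g_2$ into external tensor products of irreducibles is not automatic over $\R$ in general; here it holds because all irreducible representations of the split forms $\spi(n,\R)$ and $\spi(1,\R)$ are absolutely irreducible (Theorem \ref{th: complexification} and the Cartan-index discussion).
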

 \begin{proof}
   Since $(\sli(2n,\R),\spi(n,\R))$ and $(\so(2n,2n),\sli(2n,\R)\oplus\R)$ are symmetric pairs we have
   an inclusion of $\spi(n,\R)$ into $\so(2n,2n)$. By the previous paragraphs we have that such inclusion
   induces a decomposition of $\so(2n,2n)$ into a direct sum of irreducible $\spi(n,\R)$-modules as follow
   \begin{equation*}
   \so(2n,2n)\simeq\spi(n,\R)\oplus\bigoplus_{i=1}^3\pi^i_2\oplus\bigoplus_{i=1}^3\pi^i_0.
 \end{equation*}

 Lemma \ref{lem: A rep} shows that the inclusion of $\spi(n,\R)$ into $\so(2n,2n)$, is unique up to
 isomorphism. On the other hand, by the simplicity of $\spi(1,\R)$ and since the inclusion of $\spi(1,\R)$
 into $\so(2n,2n)$ is contained in $Z_{\so(2n,2n)}(\spi(n,\R))$ (the centralizer of $\spi(n,\R)$ in
 $\so(2n,2n)$) then we have that the inclusion of $\spi(n,\R)\oplus\spi(1,\R)$ into $\so(2n,2n)$ is unique
 up to isomorphism.
 \end{proof}

 An immediate consequence of the previous lemma is the following corollary.

 \begin{corollary}\label{cor: inclusion}
   With the above inclusion of $\spi(n,\R)\oplus\spi(1,\R)$ into $\so(2n,2n)$, given in Lemma \ref{lem: inclusion},
   we have that $\R^{2n,2n}$ is an irreducible $\big(\spi(n,\R)\oplus\spi(1,\R)\big)$-module.
 \end{corollary}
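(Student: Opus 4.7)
The plan is to analyze the inclusion given in Lemma~\ref{lem: inclusion} by first restricting to each factor separately and then combining them. Let $V_1=\{(x_1,x_2,0,0)\}$ and $V_2=\{(0,0,x_3,x_4)\}$ be the two $2n$-dimensional subspaces of $\R^{4n}$. Setting $a=b=c=0$ in the formula for the embedding, one sees that $V_1$ and $V_2$ are each $\spi(n,\R)$-invariant, with $V_1$ carrying the standard (defining) representation of $\spi(n,\R)$ on $\R^{2n}$, and $V_2$ carrying its contragredient. Since $\spi(n,\R)$ preserves the symplectic form $\widetilde J$ on $\R^{2n}$, the standard representation is self-dual, so $V_1\cong V_2$ as $\spi(n,\R)$-modules, each of them being the irreducible standard representation.

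Next I would compute the action of the $\spi(1,\R)$ generators corresponding to $(a,b,c)=(1,0,0),(0,1,0),(0,0,1)$. A direct inspection of the embedding formula shows that these act on $\R^{4n}$ as $H=\mathrm{diag}(I_n,I_n,-I_n,-I_n)$, as a map $E$ sending $(x_1,x_2,0,0)\mapsto 0$ and $(0,0,x_3,x_4)\mapsto(x_4,-x_3,0,0)$, and as a map $F$ sending $(x_1,x_2,0,0)\mapsto(0,0,-x_2,x_1)$ and $(0,0,x_3,x_4)\mapsto 0$. Thus $\spi(1,\R)$ preserves the decomposition $\R^{4n}=V_1\oplus V_2$ only up to swapping the summands, and the identifications it induces between $V_1$ and $V_2$ are $\spi(n,\R)$-equivariant isomorphisms (as they must be, since the two actions commute).

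To conclude, let $W$ be a nonzero $(\spi(n,\R)\oplus\spi(1,\R))$-submodule of $\R^{4n}$. Since $V_1$ and $V_2$ are isomorphic irreducible $\spi(n,\R)$-modules, by Schur's lemma the $\spi(n,\R)$-submodules of $V_1\oplus V_2$ are either $0$, the whole space $V_1\oplus V_2$, or a one-parameter family of ``graph'' submodules $W_\phi=\{(v,\phi(v)):v\in V_1\}$ parametrized by $\phi\in\mathrm{Hom}_{\spi(n,\R)}(V_1,V_2)\cong\R$. The element $H$ acts as the identity on $V_1$ and as $-$identity on $V_2$, so $H\cdot W_\phi=W_{-\phi}\neq W_\phi$ unless $\phi=0$; thus no graph submodule is preserved by $\spi(1,\R)$. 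Similarly, $V_1$ is not $F$-invariant (since $F$ sends it into $V_2$), and $V_2$ is not $E$-invariant; so neither $V_1$ nor $V_2$ alone is an $\spi(1,\R)$-submodule. This forces $W=\R^{4n}$, proving irreducibility.

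The only delicate point is verifying the explicit formulas for the generators of $\spi(1,\R)$ acting on $\R^{4n}$ via the embedding of Lemma~\ref{lem: inclusion}; everything else is a short application of Schur's lemma. (Conceptually, the argument shows that $\R^{2n,2n}$ is the external tensor product of the two standard representations, which is automatically irreducible as a module for the direct sum of the two simple Lie algebras.)
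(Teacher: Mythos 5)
Your proof is correct, and it is considerably more explicit than what the paper offers: the paper states the corollary without proof, as an ``immediate consequence'' of the uniqueness-of-inclusion Lemma~\ref{lem: inclusion}, implicitly leaning on the displayed matrix embedding and the symmetric-pair decompositions from \cite{Berger}. You instead verify irreducibility directly from the embedding formula: your identification of the two $\spi(n,\R)$-blocks (standard representation on $V_1$, its contragredient on $V_2$, identified via the symplectic form), your computation of $H$, $E$, $F$, and the Schur's lemma analysis of submodules are all accurate, and the closing observation that the module is the external tensor product $\R^{2n}\boxtimes\R^{2}$ of two absolutely irreducible representations is the cleanest conceptual reason for irreducibility. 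One small imprecision: your stated trichotomy of $\spi(n,\R)$-submodules of $V_1\oplus V_2$ (zero, everything, or graphs $W_\phi$ of maps $V_1\to V_2$) omits $V_2$ itself, which is not the graph of any such $\phi$; the correct list of proper nonzero submodules is $\{V\otimes L : L\ \text{a line in}\ \R^2\}$, i.e.\ $V_2$ together with the graphs. Since you dispose of $V_2$ separately (it is not $E$-invariant), the argument is complete, but the classification sentence should be phrased to include it. Your approach has the added benefit of making transparent exactly how $\spi(1,\R)$ mixes the two copies of the standard representation, which the paper never spells out.
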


 Next, we analyze the representations of $\spi(n,\R)$ through the study of their correspondent complexification,
 all these facts can be found in \cite{Onishchik}.

 Let $\g_0$ be an real Lie algebra and let $\rho:\g_0\to\gl(V_0)$ be a representation of $\g_0$ in a real vector
 space $V_0$. Let us denote $V=V_0(\C)$ and $\g=\g_0(\C)$. Here, we have two complexification operations related to
 $\rho$. First, we have a complex representation $\rho^\C:\g_0\to\gl(V)$, obtained extending any $\rho(x)$,
 $x\in\g_0$, to a complex linear operator in $V$. Second, we can extend $\rho^\C$ to a homomorphism of complex
 Lie algebras $\rho(\C):\g\to\gl(V)$.

 The following result uses the previous complex representations and it gives a classification of irreducible
 real representations.

 \begin{theorem}[{\cite[Th 1, Sect. 8]{Onishchik}}]\label{th: complexification}
   Any irreducible real representation $\rho:\g_0\to\gl(V_0)$ of a real Lie algebra $\g_0$ satisfies precisely
   on of the following two conditions:
   \begin{enumerate}
     \item[(i)] $\rho^\C$ is an irreducible complex representation;
     \item[$(ii)$] $\rho=\rho'_\R$, where $\rho'$ is an irreducible complex representation admitting no
      invariant real structures.
   \end{enumerate}
   Conversely, any real representation $\rho$ satisfying $(i)$ or (ii) is irreducible.
 \end{theorem}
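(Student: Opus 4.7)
The plan is to exploit the complex conjugation $\sigma\colon V\to V$ that fixes $V_0$ pointwise; since $\rho^\C$ is obtained from $\rho$ by $\R$-linear extension, $\sigma$ commutes with the $\g_0$-action on $V$. The key structural observation is that a complex subspace $U\subset V$ is $\sigma$-stable if and only if $U=(U\cap V_0)\otimes_\R\C$, so the $\sigma$-stable, $\g_0$-invariant complex subspaces of $V$ correspond bijectively to the $\g_0$-invariant real subspaces of $V_0$.

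For the forward direction, assume $\rho$ is irreducible. If $\rho^\C$ is irreducible we are in case (i); otherwise pick a proper nonzero $\g_0$-invariant complex subspace $W\subset V$. Then $W+\sigma(W)$ and $W\cap\sigma(W)$ are $\sigma$-stable and $\g_0$-invariant, so by the bijection and irreducibility of $\rho$ they equal $V$ and $0$ respectively; hence $V=W\oplus\sigma(W)$. The projection $\pi\colon V_0\to W$ along $\sigma(W)$ is then a real $\g_0$-equivariant isomorphism onto the underlying real space of $W$, so $\rho\cong\rho'_\R$ with $\rho':=\rho^\C|_W$. Rerunning the same decomposition on any proper invariant $W'\subsetneq W$ would force $V=W'\oplus\sigma(W')$ and therefore $\dim_\C W'=\dim_\C W$, so $\rho'$ is irreducible. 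Finally, any invariant real structure on $\rho'$ would pull back via $\pi$ to a proper $\g_0$-invariant real subspace of $V_0$, contradicting irreducibility of $\rho$. Cases (i) and (ii) are mutually exclusive, since in (ii) one has $V\cong W\oplus\sigma(W)$ as complex $\g_0$-modules, which is reducible.

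For the converse, case (i) is immediate: any real $\g_0$-invariant $U_0\subset V_0$ gives an invariant complex subspace $U_0\otimes_\R\C\subset V$, so $U_0\in\{0,V_0\}$. For case (ii), write $J$ for the complex structure on $W$ and suppose $U_0\subset W=V_0$ is real $\g_0$-invariant. Then $U_0\cap JU_0$ and $U_0+JU_0$ are $J$-stable and $\g_0$-invariant, hence $0$ or $W$ by the complex irreducibility of $\rho'$. Either $U_0$ itself is $J$-invariant (so equals $0$ or $W$), or $W=U_0\oplus JU_0$; in the latter case the $\R$-linear involution $j$ equal to $+1$ on $U_0$ and $-1$ on $JU_0$ is conjugate-linear and $\g_0$-equivariant, producing an invariant real structure on $\rho'$ with fixed-point set $U_0$ and contradicting the hypothesis.

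The main technical point is the forward direction: one must show that the complex piece $W$ chosen inside $V$ is not only $\g_0$-invariant but in fact irreducible and carries no invariant real structure. Both properties are forced via the pullback along $\pi$ by the irreducibility of $\rho$, and this is where the hypothesis is used in an essential way.
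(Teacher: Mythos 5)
Your proof is correct and complete: the conjugation $\sigma$ on $V=V_0(\C)$, the dictionary between $\sigma$-stable invariant complex subspaces and invariant real subspaces, the dichotomy $V=W\oplus\sigma(W)$, and the construction of the real structure $j$ from a hypothetical invariant $U_0$ in the converse are all carried out properly. The paper itself gives no proof of this statement (it is quoted from Onishchik, Theorem 1 of Section 8), and your argument is precisely the standard one found there, so there is nothing to compare beyond noting that you have supplied the omitted proof correctly.
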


 Let $\rho:\g_0\to\gl(V)$ be a self-conjugate irreducible complex representation. The \emph{Cartan Index}
 of $\rho$ is  $\varepsilon(\rho)=\mathrm{sgn}({c})=\pm1$, where $c$ is defined by the following condition:
 $S^2=ce$, where $S$ is an automorphism of $V$ commuting with $\rho$. By the results in Section 8 in
 \cite{Onishchik}, we have that an irreducible
 complex representation $\rho:\g\to\gl(V)$ admits an invariant real structure if and only if $\rho$ is
 self-conjugate and its Cartan index is equal to $1$.

 By Theorem 3 in Section 8 and Table 5 in \cite{Onishchik} we have that the irreducible complex representations
 of $\spi(n,\R)$ are self-conjugate and their Cartan index is always $1$. Therefore, the study of real
 irreducible representations of $\spi(n,\R)$ is similar to the study of irreducible complex representations
 of $\spi(n,\C)$.

 From Section 5.5.2 in \cite{Goodman} we have a bijection between (finite) complex representations of
 a complex semisimple Lie algebra $\g$ and the set of dominant integral weights associated to $\g$.
 The dominant integral
 weights are of the form $n_1\varpi_1+n_2\varpi_2+\cdots+n_k\varpi_k$ with $n_k\in\mathbb{N}$, where
 $\varpi_1,\varpi_2,\ldots,\varpi_k$ are the fundamental weights of $\g$.

 It is clear that the dimension of the representation associated to
 $n_1\varpi_1+n_2\varpi_2+\cdots+n_k\varpi_k$  is bigger or equal to the dimension of the representation
 associated to $n_j\varpi_j$ and this, if $n_j\neq0$, to the dimension of the representation associated
 to $\varpi_j$, for every $j\in\{1,2,\ldots k\}$.

 In our case, $\g=\spi(n,\C)$, and therefore for $\g_0=\spi(n,\R)$ we have that the fundamental weights
 are $\varpi_1,\varpi_2,\ldots,\varpi_n$.

 \begin{lemma}\label{lem: dimension 4n}
   The dimension of the representation of $\spi(n,\R)$ associated to $\varpi_j$ is bigger that $4n$ when
   $n\geq3$ and $2\leq j\leq n$.
 \end{lemma}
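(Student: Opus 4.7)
The plan is to invoke the explicit dimension formula for the fundamental representations of $\spi(n,\C)$; since the paper has reduced real irreducible representations of $\spi(n,\R)$ to complex ones, this governs the $\spi(n,\R)$-case as well. Realizing $V(\varpi_j)$ as the kernel of the contraction $\bigwedge^j\C^{2n}\to\bigwedge^{j-2}\C^{2n}$ induced by the standard symplectic form on $\C^{2n}$, one has
\begin{equation*}
  \dim V(\varpi_j)=\binom{2n}{j}-\binom{2n}{j-2},\qquad 1\le j\le n,
\end{equation*}
with the conventions $\binom{2n}{-1}=0$ and $\binom{2n}{0}=1$. The lemma then reduces to verifying the numerical inequality $\binom{2n}{j}-\binom{2n}{j-2}>4n$ for every $n\ge 3$ and $2\le j\le n$.

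I would dispose of the boundary case $j=2$ by hand, since the uniform lower bound used below is too loose there. The formula gives $\dim V(\varpi_2)=2n^2-n-1$, and $2n^2-n-1>4n$ is equivalent to $2n^2-5n-1>0$; this holds at $n=3$ (where the left side equals $2$) and the polynomial is strictly increasing for $n\ge 2$, so it holds for all $n\ge 3$.

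For $3\le j\le n$ I would bound the ratio
\begin{equation*}
  \frac{\binom{2n}{j-2}}{\binom{2n}{j}}=\frac{j(j-1)}{(2n-j+1)(2n-j+2)}\le\frac{n(n-1)}{(n+1)(n+2)},
\end{equation*}
using $j\le n$ on numerator and denominator separately. Combined with the unimodality of $\binom{2n}{\cdot}$, which gives $\binom{2n}{j}\ge\binom{2n}{3}$ for $3\le j\le n$ (valid because $n\ge 3$), this yields
\begin{equation*}
  \dim V(\varpi_j)\ge\binom{2n}{3}\cdot\frac{4n+2}{(n+1)(n+2)}=\frac{2n(2n-1)(n-1)(4n+2)}{3(n+1)(n+2)}.
\end{equation*}
A direct polynomial simplification reduces $\dim V(\varpi_j)>4n$ to $4n^3-7n^2-10n-5>0$, which equals $10$ at $n=3$ and has positive derivative $12n^2-14n-10$ there; hence the inequality holds for all $n\ge 3$.

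The main obstacle is not conceptual but rather the need to separate the boundary case $j=2$ from the uniform estimate for $j\ge 3$: the single estimate from the ratio bound together with unimodality is insufficient precisely at $(n,j)=(3,2)$, forcing the case split. Once the dimension formula is accepted, everything else is routine binomial and polynomial arithmetic.
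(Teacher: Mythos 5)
Your proof is correct and follows essentially the same route as the paper: both start from the formula $\dim V(\varpi_j)=\binom{2n}{j}-\binom{2n}{j-2}$ (Corollary 5.5.17 in Goodman--Wallach) and then reduce the claim to elementary binomial and polynomial inequalities, isolating small $j$ as boundary cases. The only difference is bookkeeping — you factor out $\binom{2n}{j}$ and need only the single special case $j=2$, whereas the paper factors out $\binom{2n}{k-2}$ and treats $j=2$ and $j=3$ separately — so your version is, if anything, slightly tidier.
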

 \begin{proof}
   By Corollary 5.5.17 in \cite{Goodman} we have that the dimension of the complex representation of
   $\spi(n,\C)$ associated to $\varpi_j$, and hence the real representation of $\spi(n,\R)$ associated to
   $\varpi_j$, is $\binom{2n}{j}-\binom{2n}{j-2}$ (with the convention that $\binom{m}{p}=0$ when $p$
   is a negative integer).

   If $j=2$ then
   \begin{equation*}
     \binom{2n}{2}-\binom{2n}{0}=n(2n-1)-1,
   \end{equation*}
   which satisfies that
   \begin{equation*}
     n(2n-1)-1>4n \iff j\geq3.
   \end{equation*}

   If $j=3$ then
   \begin{equation*}
     \binom{2n}{3}-\binom{2n}{1}=\frac{2n(2n-1)(n-1)}{3}-2n=\frac{2n(2n^2-3n-2)}{3}
   \end{equation*}
   satisfying that
   \begin{eqnarray*}
     \frac{2n(2n^2-3n-2)}{3}>4n &\iff& \frac{2n(2n^2-3n-8)}{3}>0 \\
      &\iff& 2n(2n^2-3n-8)>0 \\
      &\iff& 2n^2-3n-8>0 \\
      &\iff& n\geq3.
   \end{eqnarray*}

   For $4\leq k\leq n$ we have
   \begin{eqnarray*}
     \binom{2n}{k}-\binom{2n}{k-2} &=& \binom{2n}{k-2}\Bigg(\frac{(2n-k+3)(2n-k+4)}{(k-1)k}-1\Bigg) \\
      &\geq& \binom{2n}{k-2}\Bigg(\frac{(2k-k+3)(2k-k+4)}{(k-1)k}-1\Bigg) \\
      &=& \binom{2n}{k-2}\Bigg(\frac{(k+3)(k+4)}{(k-1)k}-1\Bigg) \\
      &=& \binom{2n}{k-2}\frac{(k+3)(k+4)-k(k-1)}{(k-1)k} \\
      &=& \binom{2n}{k-2}\frac{8k+12}{(k-1)k}\\
      &=& \binom{2n}{k-2}\frac{8(k-1)+20}{(k-1)k}\\
      &=& \binom{2n}{k-2}\Bigg(\frac{8}{k}+\frac{20}{(k-1)k}\Bigg)\\
      &>& \binom{2n}{k-2}\frac{8}{k}\\
      &\geq& \binom{2n}{k-2}\frac{8}{n}\\
      &\geq& \binom{2n}{2}\frac{8}{n}\\
      &=&8(2n-1),
   \end{eqnarray*}
   here
   \begin{equation*}
     8(2n-1)>4n \iff n>\frac{2}{3} \quad \text{in particular if} \quad  n\geq4.
   \end{equation*}
 \end{proof}

 \begin{remark}\label{rem: varpi1}
   First, by the definition of $\spi(n,\R)$, we have that the representation of $\spi(n,\R)$ on $\R^{2n}$,
   corresponding to its highest weight $\varpi_1$, preserves a non-degenerate \emph{skew-symmetric} bilinear
   form. Hence, such representation cannot preserve a non-de\-ge\-ne\-ra\-te \emph{symmetric} bilinear form.
   On the other hand, recall that the representation of $\spi(n,\R)$ on $\R$ corresponds to the trivial
   homomorphism.
 \end{remark}

 With the observations in Remark \ref{rem: varpi1} and Lemma \ref{lem: dimension 4n} we can now
 show a representation of $\spi(n,\R)$ with the minimal dimension preserving a non-degenerate symmetric
 bilinear form.

 \begin{lemma}\label{lem: dim rep}
   The minimal dimension of a non-trivial representation of $\spi(n,\R)$ which preserves a non-degenerate symmetric
   bilinear form is $4n$, even more, such representation is isomorphic, as $\spi(n,\R)$-module, to
   $\R^{2n}\oplus\R^{2n}$.
 \end{lemma}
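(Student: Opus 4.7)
The plan is to decompose any non-trivial finite-dimensional real representation $V$ of $\spi(n,\R)$ preserving a non-degenerate symmetric bilinear form into irreducibles, then use Lemma \ref{lem: dimension 4n} together with Remark \ref{rem: varpi1} to force $\dim V\ge 4n$, with equality realized by $\R^{2n}\oplus\R^{2n}$.

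First I would invoke the discussion after Theorem \ref{th: complexification}: every complex irreducible representation of $\spi(n,\C)$ is self-conjugate with Cartan index $+1$, so the real irreducible representations of $\spi(n,\R)$ are labelled by the dominant integral weights $\lambda=n_1\varpi_1+\cdots+n_n\varpi_n$. I would then argue that any irreducible summand of $V$ of dimension at most $4n$ must have $\lambda=\varpi_1$ or $\lambda=0$: Lemma \ref{lem: dimension 4n} handles every $\lambda$ with $n_j>0$ for some $j\ge 2$, while for $\lambda=n_1\varpi_1$ with $n_1\ge 2$ I would use the identification $V_{2\varpi_1}\cong\spi(n,\R)$ (the adjoint representation), of dimension $n(2n+1)>4n$ for $n\ge 2$, combined with monotonicity of $\dim V_\lambda$ in the components of $\lambda$, to conclude $\dim V_{n_1\varpi_1}>4n$ for all $n_1\ge 2$.

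Next, writing $V=V_1\oplus V_0$ as the direct sum of the $\varpi_1$-isotypic component $V_1\cong(\R^{2n})^k$ and the trivial isotypic component $V_0$, Schur's lemma forces $V_1\perp V_0$ for any invariant bilinear form, so the given symmetric non-degenerate form $B$ decomposes as $B_1\oplus B_0$ with both summands non-degenerate. Since $V$ is non-trivial, $k\ge 1$. The key step is the observation, via Remark \ref{rem: varpi1}, that the space of $\spi(n,\R)$-invariant bilinear forms on $\R^{2n}$ is one-dimensional, spanned by the symplectic form $\omega$; hence every invariant bilinear form on $V_1$ takes the shape $B_1((v_i),(w_j))=\sum_{i,j}A_{ij}\,\omega(v_i,w_j)$ for some $A\in M_k(\R)$. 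Symmetry of $B_1$ combined with the skew-symmetry of $\omega$ forces $A$ to be skew-symmetric, and non-degeneracy of $B_1$ forces $A$ to be invertible; an invertible skew-symmetric real $k\times k$ matrix exists exactly when $k$ is even, so $k\ge 2$ and $\dim V\ge 4n$. Equality forces $k=2$ and $V_0=0$, i.e.\ $V\cong\R^{2n}\oplus\R^{2n}$. The example $V=\R^{2n}\oplus\R^{2n}$ with form $B((u_1,u_2),(v_1,v_2))=\omega(u_1,v_2)-\omega(u_2,v_1)$ realises the bound.

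The main obstacle is the representation-theoretic bookkeeping needed to exclude every weight $\lambda\neq\varpi_1,0$ from contributing a summand of dimension $\le 4n$: Lemma \ref{lem: dimension 4n} disposes of weights involving $\varpi_j$ for $j\ge 2$, but the case $\lambda=n_1\varpi_1$ with $n_1\ge 2$ requires the separate input $V_{2\varpi_1}\cong\spi(n,\R)$. Once this is pinned down, the Schur-type analysis of invariant bilinear forms on $(\R^{2n})^k$ is routine.
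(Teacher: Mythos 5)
Your proof is correct and follows the same overall strategy as the paper's: restrict the possible irreducible constituents of $V$ using Lemma \ref{lem: dimension 4n} and Remark \ref{rem: varpi1}, then analyze the invariant symmetric form on the resulting isotypic decomposition. Two of your refinements are worth noting. First, you explicitly dispose of the weights $n_1\varpi_1$ with $n_1\ge 2$ via $V_{2\varpi_1}\cong\spi(n,\R)$ of dimension $n(2n+1)>4n$; the paper's Lemma \ref{lem: dimension 4n} only treats $\varpi_j$ for $j\ge2$, and its monotonicity remark reduces a general $\lambda$ only to the fundamental weights it involves, so this case is left implicit there — your argument closes that small gap. Second, where the paper enumerates the possible module structures dimension by dimension ($2n$, then $2n+1$ through $4n-1$, then exactly $4n$) and rules each out by showing the form would have to restrict non-degenerately and symmetrically to a single copy of $\R^{2n}$, you handle arbitrary multiplicity $k$ at once: an invariant bilinear form on $(\R^{2n})^k$ is $A\otimes\omega$ for $A\in M_k(\R)$, symmetry forces $A$ skew, non-degeneracy forces $A$ invertible, hence $k$ even. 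This yields the bound $\dim V\ge 4n$ and the structure at equality in one stroke, and your explicit form $\omega(u_1,v_2)-\omega(u_2,v_1)$ realizes the minimum without appealing to the symmetric-pair construction the paper uses for existence. Both routes are valid; yours is the tidier and more self-contained of the two.
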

 \begin{proof}
   Let $V$ be a non-trivial representation of $\spi(n,\R)$ which preserves a non-degenerate symmetric
   bilinear form. By Lemma \ref{lem: dimension 4n} and Remark \ref{rem: varpi1} we have that $\dim(V)=r\geq2n$.

   If $\dim(V)=2n$ then, by Lemma \ref{lem: dimension 4n} and Remark \ref{rem: varpi1}, we have that
   $V\simeq\R^{2n}$ (as $\spi(n,\R)$-module) which can not preserve a non-degenerate symmetric bilinear form.

   If $2n+1\leq\dim(V)\leq4n-1$ then, by Lemma \ref{lem: dimension 4n} and Remark \ref{rem: varpi1}, we have
   that $V\simeq\R^{2n}\oplus\bigoplus_{j=1}^{r-2n}\R$. Let $\LA\cdot,\cdot\RA$ be the non-degenerate
   symmetric bilinear form on $\R^{2n}\oplus\bigoplus_{j=1}^{r-2n}\R$ induced by its homomorphism with $V$,
   note that $\LA,\RA$ is preserved by the action of $\spi(n,\R)$. Here, if $x\in\R^{2n}$ and
   $h\in\oplus_{j=1}^{r-2n}\R$ we have that for every $A\in\spi(n,\R)$
   \begin{equation*}
     0 = \LA A\cdot x,h\RA+\LA x,A\cdot h\RA
       = \LA A\cdot x,h\RA+\LA x,0\RA
       = \LA A\cdot x,h\RA.
   \end{equation*}
   Because the elements have been taken arbitrarily, that implies that $\LA\cdot,\cdot\RA$ is non-degenerated
   when is restricted to $\R^{2n}$, which is not possible. Thence, resuming $\dim(V)\geq4n$.

   Since $\big(\sli(2n,\R),\spi(n,\R)\big)$ and $\big(\so(2n,2n),\sli(2n,\R)\oplus\R\big)$ are symmetric
   pairs then there is a non-trivial representation of $\spi(n,\R)$, with dimension $4n$,
   preserving a non-degenerate symmetric bilinear form.

   If $\dim(V)=4n$ then, by Lemma \ref{lem: dimension 4n}, $V$ must be isomorphic to $\R^{2n}\oplus\R^{2n}$,
   $\R^{2n}\oplus\bigoplus_{j=1}^{2n}\R$ or well to $\bigoplus_{j=1}^{4n}\R$. The last two options are not
   possible as is shown previously. Therefore we have that $V$ is isomorphic to $\R^{2n}\oplus\R^{2n}$.
 \end{proof}

 And as consequence of the previous result and Corollary \ref{cor: inclusion} we have the next lemma.

 \begin{lemma}\label{lem: n+1 as n+1}
   The decomposition of $\spi(n+1,\R)$ as a direct sum of irreducible
   $\big(\spi(n,\R)\oplus\spi(1,\R)\big)$-modules is given as
   $\spi(n+1,\R)=\spi(n,\R)\oplus\spi(1,\R)\oplus\R^{2n,2n}$.

 \end{lemma}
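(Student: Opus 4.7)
The plan is to exploit the symmetric pair $(\spi(n+1,\R),\spi(n,\R)\oplus\spi(1,\R))$, which appears in Berger's Table~II and arises from the block-diagonal embedding induced by the symplectic splitting $\R^{2(n+1)}=\R^{2n}\oplus\R^2$. First, I would produce the corresponding Cartan-type decomposition $\spi(n+1,\R)=\kk\oplus\mathfrak{p}$, where $\kk=\spi(n,\R)\oplus\spi(1,\R)$ is the fixed-point subalgebra of the involution $X\mapsto SXS^{-1}$ with $S=\mathrm{diag}(I_{2n},-I_2)$, and $\mathfrak{p}$ is its $\mathrm{Ad}(\kk)$-invariant Killing-orthogonal complement, satisfying $[\kk,\mathfrak{p}]\subseteq\mathfrak{p}$ and $[\mathfrak{p},\mathfrak{p}]\subseteq\kk$.

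Next, a direct dimension count gives
\begin{equation*}
  \dim\mathfrak{p}=(n+1)(2n+3)-n(2n+1)-3=4n.
\end{equation*}
Since the Killing form of $\spi(n+1,\R)$ is non-degenerate and $\mathfrak{p}=\kk^\perp$, its restriction to $\mathfrak{p}$ is a non-degenerate $\kk$-invariant symmetric bilinear form. Moreover, neither $\spi(n,\R)$ nor $\spi(1,\R)$ can act trivially on $\mathfrak{p}$: if either did, that factor would commute with all of $\spi(n+1,\R)$ and thus constitute a proper nonzero ideal, contradicting the simplicity of $\spi(n+1,\R)$.

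At this point $\mathfrak{p}$ is a $4n$-dimensional non-trivial $\spi(n,\R)$-module that preserves a non-degenerate symmetric bilinear form, so Lemma~\ref{lem: dim rep} forces $\mathfrak{p}\cong\R^{2n}\oplus\R^{2n}$ as an $\spi(n,\R)$-module. Passing to the full $\kk$-action, Lemma~\ref{lem: inclusion} guarantees that the embedding $\spi(n,\R)\oplus\spi(1,\R)\hookrightarrow\so(\mathfrak{p})\cong\so(2n,2n)$ is unique up to isomorphism, and Corollary~\ref{cor: inclusion} then identifies $\mathfrak{p}$ with the irreducible $\big(\spi(n,\R)\oplus\spi(1,\R)\big)$-module $\R^{2n,2n}$. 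Together with the tautological identification of $\kk$ with $\spi(n,\R)\oplus\spi(1,\R)$, this delivers the stated decomposition.

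I expect no serious obstacle: the only substantive step is the non-triviality of $\mathfrak{p}$ as a module over each simple factor, which is handled by the simplicity argument above. Everything else is a dimension count combined with direct invocation of Lemmas~\ref{lem: dim rep} and~\ref{lem: inclusion} and Corollary~\ref{cor: inclusion}.
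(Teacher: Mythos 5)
Your proposal is correct and follows essentially the same route as the paper: the symmetric pair $(\spi(n+1,\R),\spi(n,\R)\oplus\spi(1,\R))$ from Berger's table, the dimension count $\dim\mathfrak{p}=4n$, non-degeneracy of the invariant form on $\mathfrak{p}$, and then Lemma~\ref{lem: dim rep} together with Lemma~\ref{lem: inclusion} and Corollary~\ref{cor: inclusion} to identify $\mathfrak{p}$ with the irreducible module $\R^{2n,2n}$. Your justification of non-triviality of the $\kk$-action via the simplicity of $\spi(n+1,\R)$ (a trivially acting factor would be a proper ideal) is in fact more explicit than the paper's appeal to conjugacy of Cartan involutions, but it is not a genuinely different argument.
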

 \begin{proof}
   Recall, by \cite{Berger}, that $(\spi(n+1,\R),\spi(n,\R)\oplus\spi(1,\R))$ is a symmetric pair. On
   the other hand, since any Cartan involution on $\spi(n,\R)\oplus\spi(1,\R)$ can be extended to a
   Cartan involution on $\spi(n+1,\R)$ and all Cartan involution are conjugates we have then that the
   complement of $\spi(n,\R)\oplus\spi(1,\R)$ in $\spi(n+1,\R)$ is a non-degenerated vector subspace
   with dimension $4n$ which is a non-trivial $\big(\spi(n,\R)\oplus\spi(1,\R)\big)$-module.

   By Corollary \ref{cor: inclusion} and Lemma \ref{lem: dim rep} we have that such complement is isomorphic
   to $\R^{2n,2n}$ as a $\big(\spi(n,\R)\oplus\spi(1,\R)\big)$-module, which is irreducible. Therefore,
   we have our desired decomposition.
 \end{proof}

 As a direct consequence of the previous lemma we know the centralizer of
 $\big(\spi(n,\R)\oplus\spi(1,\R)\big)$ in $\spi(n,\R)$, such result can be found in the next lemma.

 \begin{lemma}\label{lem: centralizer}
   Assume that $\rho:\TSp(n,\R)\times\TSp(1,\R)\to\TSp(n+1,\R)$ is an homomorphism of Lie groups which is
   an immersion. Then, 
   $Z_{\TSp(n+1,\R)}\rho\big(\TSp(n,\R)\times\TSp(1,\R)\big)$, the centralizer  of
   $\TSp(n,\R)\times\TSp(1,\R)$ in $\TSp(n+1,\R)$ contains the center of $\TSp(n+1,\R)$ as a finite index
   subgroup.
 \end{lemma}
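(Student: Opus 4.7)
The plan is to reduce the claim to a Lie algebra computation: show that the centralizer of $\rho_{*}(\spi(n,\R)\oplus\spi(1,\R))$ in $\spi(n+1,\R)$ is trivial. Since $\rho$ is an immersion, $\rho_{*}$ is an injective homomorphism of Lie algebras, so I set $\h:=\rho_{*}(\spi(n,\R)\oplus\spi(1,\R))\subset\spi(n+1,\R)$. Once $Z_{\spi(n+1,\R)}(\h)=0$ is established, the centralizer $Z:=Z_{\TSp(n+1,\R)}(\rho(\TSp(n,\R)\times\TSp(1,\R)))$ will have trivial Lie algebra and hence be discrete; the remaining task is then to promote ``discrete'' to ``finite modulo $Z(\TSp(n+1,\R))$''.

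For the Lie algebra step, Lemma \ref{lem: n+1 as n+1} provides the decomposition $\spi(n+1,\R)=\h\oplus\R^{2n,2n}$ as $\h$-modules, with $\R^{2n,2n}$ irreducible and non-trivial (Corollary \ref{cor: inclusion}). Any $X\in\spi(n+1,\R)$ centralizing $\h$ decomposes uniquely as $X=X_{\h}+X_{\m}$, and since the symmetric pair relations give $[\h,\h]\subset\h$ and $[\h,\R^{2n,2n}]\subset\R^{2n,2n}$, the equation $[X,Y]=0$ for every $Y\in\h$ splits summand-wise. The first summand forces $X_{\h}\in Z(\h)=0$ (since $\spi(n,\R)$ and $\spi(1,\R)$ are simple non-abelian), and the second forces $X_{\m}$ into the $\h$-invariants of $\R^{2n,2n}$, which vanish by irreducibility and non-triviality. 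Hence $Z_{\spi(n+1,\R)}(\h)=0$.

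To transfer this to the group, let $p:\TSp(n+1,\R)\to\Sp(n+1,\R)$ be the universal cover, whose kernel is $Z(\TSp(n+1,\R))$. Because $\rho(\TSp(n,\R)\times\TSp(1,\R))$ is connected, for any $g\in\TSp(n+1,\R)$ with $p(g)$ centralizing $p(\rho(\cdots))$ the commutator map $h\mapsto ghg^{-1}h^{-1}$ takes values in the discrete group $\ker p$ on a connected domain and equals $e$ at $h=e$, hence is identically $e$. This yields
\[Z=p^{-1}\bigl(Z_{\Sp(n+1,\R)}(p(\rho(\TSp(n,\R)\times\TSp(1,\R))))\bigr).\]
The right-hand centralizer is a real algebraic subgroup of $\Sp(n+1,\R)$ with Lie algebra $Z_{\spi(n+1,\R)}(\h)=0$; its identity component is therefore trivial, and since a real algebraic group has only finitely many connected components, this centralizer is finite. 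Pulling back through $p$, the quotient $Z/Z(\TSp(n+1,\R))$ is then finite, which is the desired conclusion.

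The main obstacle is this final passage: the Lie algebra vanishing only gives that $Z$ is discrete, but $Z(\TSp(n+1,\R))\cong\mathbb{Z}$ is itself infinite, so finiteness of the quotient is not automatic. The reduction to $\Sp(n+1,\R)$, whose centre is the finite group $\{\pm I\}$, is essential; there the algebraicity of the centralizer forces finitely many components and hence absolute finiteness of $Z_{\Sp(n+1,\R)}(p(\rho(\cdots)))$, which is exactly what is needed to upgrade discreteness of $Z$ to finiteness of its quotient by $Z(\TSp(n+1,\R))$.
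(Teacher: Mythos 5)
Your proof is correct, and its first half --- deducing $Z_{\spi(n+1,\R)}(\h)=0$ from the decomposition $\spi(n+1,\R)=\h\oplus\R^{2n,2n}$ of Lemma \ref{lem: n+1 as n+1} together with the irreducibility and non-triviality of $\R^{2n,2n}$, hence discreteness of the group centralizer --- is exactly the paper's argument. Where you genuinely diverge is in the passage from ``discrete'' to ``finite index over the center'': the paper invokes Lemma 1.1.3.7 of \cite{Warner} to assert that $Z_{\TSp(n+1,\R)}\rho\big(\TSp(n,\R)\times\TSp(1,\R)\big)$ is \emph{finite} and then notes that it contains the center; as literally written this cannot hold, since $Z(\TSp(n+1,\R))\cong\mathbb{Z}$ is infinite, so what must be meant is finiteness modulo the center. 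Your route supplies exactly the missing precision: you descend along the covering $p:\TSp(n+1,\R)\to\Sp(n+1,\R)$, identify $Z$ with $p^{-1}$ of the downstairs centralizer by the standard connectedness-versus-discreteness argument for the commutator map, and then use that a centralizer in a real algebraic group is Zariski-closed, hence has finitely many real components, hence is finite once its Lie algebra vanishes. This makes the argument self-contained and locates explicitly where the infinite center is absorbed, at the cost of invoking the algebraic-group component theorem instead of Warner's lemma. One small correction: $\ker p=\pi_1(\Sp(n+1,\R))$ is an index-two subgroup of $Z(\TSp(n+1,\R))=p^{-1}(\{\pm I\})$, not all of it; this does not affect your conclusion, since $Z/Z(\TSp(n+1,\R))$ still injects into the finite group $Z_{\Sp(n+1,\R)}\big(p\rho(\cdots)\big)/\{\pm I\}$.
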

 \begin{proof}
   As a consequence of the decomposition of $\spi(n+1,\R)$ as a direct sum of irreducible
   $\big(\spi(n,\R)\oplus\spi(1,\R)\big)$-modules, shown in Lemma \ref{lem: n+1 as n+1}, we have that
   $\mathfrak{z}_{\spi(n+1,\R)}\big(\spi(n,\R)\oplus\spi(1,\R)\big)=0$, therefore
   $Z_{\TSp(n+1,\R)}\rho\big(\TSp(n,\R)\times\TSp(1,\R)\big)$ is discrete.

   By Lemma $1.1.3.7$ in \cite{Warner} we have that $Z_{\TSp(n+1,\R)}\rho\big(\TSp(n,\R)\times\TSp(1,\R)\big)$
   is finite. Since $Z(\TSp(n+1,\R))\subseteq Z_{\TSp(n+1,\R)}\rho\big(\TSp(n,\R)\times\TSp(1,\R)\big)$,
   therefore we have our result.
 \end{proof}

 By the results in Lemma \ref{lem: dim rep} we have that $\R^{2n,2n}$ is isomorphic to $\R^{2n}\oplus\R^{2n}$
 as $\spi(n,\R)$-module. On the other hand, by the inclusion of $\spi(n,\R)$ into $\so(2n,2n)$ (unique up
 to isomorphisms) as in Lemma \ref{lem: inclusion}, and the remarks previous to such lemma, we have that
 the vector subspaces $\R^{2n}$ belongs to the nullcone.

 Next, we will see the properties of $\big(\spi(n,\R)\oplus\spi(1,\R)\big)$-invariants inner products on
 $\spi(n,\R)$, $\spi(n,\R)$ and $\R^{2n,2n}$.

 \begin{lemma}\label{lem: metric inv}
   Let $\LA\cdot,\cdot\RA_n$, $\LA\cdot,\cdot\RA_1$ and $\LA\cdot,\cdot\RA_0$ be inner products on
   $\spi(n,\R)$, $\spi(1,\R)$ and $\R^{2n,2n}$, respectively. Assume that $\LA\cdot,\cdot\RA_n$ is
   $\spi(n,\R)$-invariant, $\LA\cdot,\cdot\RA_1$ is $\spi(1,\R)$-invariant and $\LA\cdot,\cdot\RA_0$ is
   $\big(\spi(n,\R)\oplus\spi(1,\R)\big)$-invariant, for $n\geq3$. Then there exist $a_0,a_1,a_n\in\R$
   such that $a_0\LA\cdot,\cdot\RA_0+a_1\LA\cdot,\cdot\RA_1+a_n\LA\cdot,\cdot\RA_n$
   is the Killing form of $\spi(n+1,\R)$.
 \end{lemma}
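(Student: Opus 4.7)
My strategy is to start from the decomposition
\begin{equation*}
  \spi(n+1,\R) = \spi(n,\R) \oplus \spi(1,\R) \oplus \R^{2n,2n}
\end{equation*}
of Lemma \ref{lem: n+1 as n+1} and analyze the Killing form $\textsf{K}_{n+1}$ of $\spi(n+1,\R)$ in terms of this decomposition. Since $\textsf{K}_{n+1}$ is $\ad$-invariant on $\spi(n+1,\R)$, its restriction is in particular $\big(\spi(n,\R) \oplus \spi(1,\R)\big)$-invariant. I would first check that the three summands are pairwise non-isomorphic as $\big(\spi(n,\R) \oplus \spi(1,\R)\big)$-modules: $\spi(n,\R)$ has trivial action of the second factor, $\spi(1,\R)$ has trivial action of the first, and $\R^{2n,2n}$ is non-trivial under both factors by Corollary \ref{cor: inclusion} and the explicit inclusion described in the appendix. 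A standard application of Schur's lemma (adapted to bilinear forms) then forces $\textsf{K}_{n+1}$ to pair each summand orthogonally to the other two, so that $\textsf{K}_{n+1}$ decomposes as a direct sum of its restrictions to each of the three summands.

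Next, I would show that on each summand the restriction of $\textsf{K}_{n+1}$ is a scalar multiple of the given invariant form. For $\spi(n,\R)$, the adjoint representation is absolutely irreducible since its complexification is the adjoint of the simple complex Lie algebra $\spi(n,\C)$; hence the space of $\spi(n,\R)$-invariant symmetric bilinear forms on $\spi(n,\R)$ is one-dimensional, yielding $a_n \in \R$ with $\textsf{K}_{n+1}|_{\spi(n,\R)} = a_n \LA \cdot,\cdot \RA_n$. The same reasoning applied to $\spi(1,\R) \cong \sli(2,\R)$ produces $a_1$. For $\R^{2n,2n}$, I would establish absolute irreducibility as a $\big(\spi(n,\R) \oplus \spi(1,\R)\big)$-module by passing to complexification: using the explicit inclusion in the appendix together with Lemma \ref{lem: dim rep}, one identifies $\R^{2n,2n} \otimes \C$ with the tensor product $\C^{2n} \otimes \C^2$ of the standard representations of $\spi(n,\C)$ and $\sli(2,\C)$, which is irreducible as a tensor product of irreducibles attached to distinct simple factors. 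Thus the space of invariant symmetric forms on $\R^{2n,2n}$ is one-dimensional as well, giving $a_0$.

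The main obstacle is the uniqueness of the invariant symmetric bilinear form on $\R^{2n,2n}$; once absolute irreducibility of this module is in hand, the rest is a direct combination of Schur's lemma with the orthogonal decomposition from the first paragraph. Summing the three proportionalities then produces exactly $\textsf{K}_{n+1} = a_n \LA \cdot,\cdot \RA_n + a_1 \LA \cdot,\cdot \RA_1 + a_0 \LA \cdot,\cdot \RA_0$ under the identification of $\spi(n+1,\R)$ with its decomposition, completing the proof. A small bookkeeping remark: since $\textsf{K}_{n+1}$ is non-degenerate on $\spi(n+1,\R)$ and the summands are mutually orthogonal, each $a_i$ is automatically non-zero, which is convenient for the rescaling used in Lemma \ref{lem: almost done} even though the statement only asks for existence of the scalars.
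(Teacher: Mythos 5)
Your proof is correct and follows essentially the same route as the paper's (very terse) argument: decompose $\spi(n+1,\R)$ into the three pairwise non-isomorphic $\big(\spi(n,\R)\oplus\spi(1,\R)\big)$-summands, use Schur's lemma to force orthogonality of the summands under the Killing form, and use absolute irreducibility of each summand to see that the space of invariant symmetric forms on it is one-dimensional. Your added observations --- the identification of $\R^{2n,2n}\otimes\C$ with $\C^{2n}\otimes\C^2$ to get absolute irreducibility, and the non-vanishing of the scalars $a_i$ --- are exactly the details the paper leaves implicit.
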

 \begin{proof}
   The proof follows from Schur's Lemma, the irreducibility of $\R^{2n,2n}$ as a
   $\big(\spi(n,\R)\oplus\spi(1,\R)\big)$-module and the uniqueness of the Killing form of complex
   simple Lie algebras.
 \end{proof}


\begin{thebibliography}{99}
 \addcontentsline{toc}{chapter}{Bibliography}
  \bibitem{Berger} M. Berger, \emph{Les espaces symm\'etriques noncompacts}, Ann. Sci. École Norm. Sup., 74, (1957) pp. 85 - 177.

  \bibitem{GCT-CQ} A. Candel, R. Quiroga-Barranco, \emph{Gromov's centralizer theorem}, Geom. Dedicata 100
   (2003), 123-155.

  \bibitem{Goodman} R. Goodman, N. R. Wallach, \emph{Symmetry, Representations and Invariants},
   Springer New York, Graduate Texts in Mathematics, vol. \textbf{255}.

  \bibitem{Muller} D. M\"{u}ller, \emph{Isometries of bi-invariant pseudo-Riemannian metrics on Lie groups},
   Geometriae Dedicata, vol. \textbf{29} (1989), No. 1, 65-96.

  \bibitem{Nevo} A. Nevo, R. J. Zimmer, \emph{Invariant rigid geometric structures and smooth
   projective factors}, Geom. Funct. Anal. Vol. \textbf{19} (2009), 520-535.

  \bibitem{Oneill} B. O'neill, \emph{Semi-Riemannian Geometry with Applications to Relativity}, Pure and Applied
   Mathematics, \textbf{103}, Academic Press, Inc., New York, 1983.

  \bibitem{Onishchik} A. L. Onishchik, \emph{Lectures on Real Semisimple Lie Algebras and their
   Re\-pre\-sentations}, ESI Lectures in Mathematics and Physics, European Mathematical Society (EMS), Z\"urich, 2004.

  \bibitem{OQ} G. \'Olafsson, R. Quiroga-Barranco, \emph{On Low Dimensional Manifolds with Isometric
   $SO_0(p,q)$-Actions}, Transformation Groups, Vol. 17, 2012, No. 3, 835-860.

  \bibitem{OQ-Upq} \bysame, \emph{On Low Dimensional Manifolds with Isometric
  $\widetilde{U}(p,q)$-Actions}, preprint, 2015, ArXiv.

  \bibitem{Q-TINC} R. Quiroga-Barranco, \emph{Isometric actions  of simple Lie groups and transverse structures: the
   integrable normal case}, Geometry, Rigidity and Group Actions, 229-261, Chicago Lectures in Math., Univ. Chicago
   Press, Chicago, IL, 2011.

  \bibitem{Szaro} J. Szaro. \emph{Isotropy of semisimple group actions on manifolds with geometric structure},
   Amer. J. Math. 120 (1998), 129-158.

  \bibitem{Warner} G. Warner, \emph{Harmonic Analysis on Semi-simple Lie groups I}, Springer-Verlag, Berlin
   Heidelberg New York, 1972.

  \bibitem{Witte} D. W. Morris, R. J. Zimmer \emph{Ergodic Theory, groups and geometry}, CBMS regional conference
   series in Mathematics, University of Minnesota; v. 109, 1998.

  \bibitem{Zim-Entropy} R. J. Zimmer, \emph{Entropy and arithmetic quotients for simple automorphism groups
   of geometric manifolds}, Geom. Dedicata \textbf{107} (2004), 47-56.


\end{thebibliography}
\end{document}